\documentclass[]{interact}

\usepackage{epstopdf}
\usepackage[caption=false]{subfig}

\usepackage[square, numbers]{natbib}

\theoremstyle{plain}
\newtheorem{theorem}{Theorem}[section]
\newtheorem{lemma}[theorem]{Lemma}

\theoremstyle{definition}

\theoremstyle{remark}

\usepackage{latexsym,array,delarray,epsfig, color,amsfonts,youngtab,yfonts,inslrmin, textcomp, txfonts, pxfonts}
\usepackage{graphicx, epsfig, textcomp, txfonts, pxfonts}
\usepackage{amsmath,amsthm,amsfonts,amssymb,amsxtra, mathtools} 
\usepackage{comment, cancel}
\usepackage{mathrsfs}
\usepackage{mathdots}
\usepackage{multirow}
\usepackage{tikz-cd}

\theoremstyle{plain}
\newtheorem{thm}{Theorem}
\newtheorem{prop}[thm]{Proposition}
\newtheorem{cor}[thm]{Corollary}

\newtheorem*{thm*}{Theorem}
\newtheorem*{lemma*}{Lemma}
\newtheorem*{prop*}{Proposition}
\newtheorem*{cor*}{Corollary}
\newtheorem*{conj*}{Conjecture}

\theoremstyle{definition}
\newtheorem{defn}[thm]{Definition}

\setlength{\parindent}{10pt} 
\setlength{\parskip}{2ex plus 0.5ex minus 0.2ex} 

\theoremstyle{remark}
\newtheorem*{rmk}{Remark}

\usepackage{amsmath,amsthm,amsfonts,amssymb,amsxtra} 
\usepackage{comment, cancel}
\usepackage{tikz}    

%
    { \begin{equation*} }
    { \end{equation*} }
\newcommand{\currenumi}{}
	{%
	\renewcommand{\currenumi}{\theenumi}
	\renewcommand{\theenumi}{(\arabic{enumi})}
	
	\begin{enumerate}%
	}
	{%
	\end{enumerate}
	\renewcommand{\theenumi}{\currenumi}%
	}
	{%
	\renewcommand{\currenumi}{\theenumi}
	\renewcommand{\theenumi}{(\roman{enumi})}
	
	\begin{enumerate}%
	}
	{%
	\end{enumerate}
	\renewcommand{\theenumi}{\currenumi}%
	}
	

\newcommand{\C}{\mathbb{C}}

\newcommand{\Z}{\mathbb{Z}}

\newcommand{\cc}{\mathbb{C}}

\newcommand{\zz}{\mathbb{Z}}

 \newcommand\boq{\mathbf q} \newcommand\bor{\mathbf r}


\newcommand{\on}{\operatorname}
\renewcommand{\frak}{\mathfrak}







\newcommand{\g}{\mathfrak{g}}

\newcommand{\h}{\mathfrak{h}}
\newcommand{\sln}{\mathfrak{sl}_{n+1}}

\newcommand{\asln}{\mathfrak{\tilde{sl}}_{n+1}}

\newcommand{\usln}{U(\mathfrak{sl}_{n+1})}

\newcommand{\cta}{\sln[s^{\pm 1}, t^{\pm 1}]}

\newcommand{\Sl}{S_{\ell}}
\newcommand{\asl}{\text{Aff}^{1}(S_{\ell})}
\newcommand{\vl}{V^{\otimes \ell}}

\begin{document}

\title{Schur-Weyl Duality for Toroidal Algebras of Type $A$}

\author{\name{Vyjayanthi Chari\textsuperscript{a}\thanks{The project was made possible by a SQuaRE at the American Institute of Mathematics. The authors thank AIM for providing a supportive and mathematically rich environment. VC also acknowledges support from the Infosys Visiting Chair position at IISc.}, \ Lauren Grimley\textsuperscript{b},\  Zongzhu Lin\textsuperscript{c}, \ Chad R. Mangum\textsuperscript{d},  \\Christine Uhl\textsuperscript{e}, \ Evan Wilson\textsuperscript{f}}
\affil{\textsuperscript{a}University of California, Riverside; \textsuperscript{b}The University of Oklahoma; \textsuperscript{c}Kansas State University; \textsuperscript{d}Clemson University; \textsuperscript{e}St. Bonaventure University; \textsuperscript{f}Brightpoint Community College}}

\maketitle
\begin{abstract}
We state and prove an analog of the Schur-Weyl duality for a quotient of the classical $2$-toroidal Lie algebra of type $A$. We then provide a method to extend this duality to the $m$-toroidal case, $m > 2$. 
\end{abstract}

\begin{keywords}
    Schur-Weyl duality, Toroidal Lie algebras, Multiloop Lie algebras

\noindent{\textbf{2020 MATHEMATICS SUBJECT CLASSIFICATION}} \\
    17B67, 17B65, 20G43

\noindent \textit{Note: This article has been accepted for publication in Communications in Algebra, published by Taylor \& Francis.}
\end{keywords}

\section{Introduction}
Schur-Weyl duality is a classical result (see for instance   \cite{Fulton}) in representation theory which connects  the representation theory  of the symmetric group $\Sl$ with that of the general linear Lie algebra $\mathfrak {gl}_{n+1}$ (or special linear Lie algebra $\mathfrak {sl}_{n+1}$). It has numerous extensions including, among others, dualities between finite dimensional representations of $\mathfrak sp_{2n}, \mathfrak {so}_{n}$ and the Brauer algebra \cite{Dipper}, \cite{Daugherty} (with further extensions in this vein in \cite{Calvert}, \cite {Calverts}, \cite {Doty}); dualities between the loop algebra $\mathfrak {sl}_{n+1}\otimes \mathbb C[t,t^{-1}]$ and the algebra $\mathbb C [\mathbb Z^{\ell} \rtimes \Sl]$ \cite {Flicker}; dualities between quantum/superalgebras and various Hecke or Iwahori-Hecke algebras, as appropriate \cite{Chari}, \cite{Ehrig}, \cite{Flickers}, \cite{Hu}, \cite {Moon}, \cite{VV96}; and related dualities over other rings and fields \cite{Benson-Doty}, \cite{Cruz}, \cite{deConcini-Procesi}, \cite{Krause}. The  duality is  expressed either as a double-centralizer theorem or as an equivalence of categories for certain values of $\ell$ and $n$. Their study is important not only for their theoretical interest but also because they arise in certain constructions of irreducible representations (see for instance \cite{Goodman} for some classical results).

Finite dimensional representations of loop algebras have been studied extensively. These representations are interesting because they are in general not completely reducible. 
There are many interesting families of indecomposable 
 finite dimensional representations of loop algebras such as  Kirillov-Reshetikhin modules which originally arose in the study of integrable models. Other well--known examples are the local Weyl modules, which were originally studied in \cite{ChariPressley}. These are certain universal finite--dimensional modules and can be realized as Demazure modules in level one representations of the affine Lie algebra.
 
There are two main results in this paper. The first establishes a Schur-Weyl duality between finite dimensional representations of the two-loop algebra $\mathfrak {sl}_{n+1} \otimes \mathbb C[t^{\pm 1},s^{\pm 1}]$ (a quotient of the 2-toroidal Lie algebra used in \cite{Jing}, \cite{Moody}, \cite{VV98}) and finite dimensional representations of a classical (i.e. non-quantum) version of the toroidal Hecke algebra of \cite{VV96}. The second main result allows us to extend the first main result to the $m$-toroidal case for $m > 2$. In future work we hope to understand the analog of these constructions in twisted and quantum cases.

The paper is structured as follows. We first recall in \S \ref{classicaluntwistedaffine} the known Schur-Weyl dualities for the $\sln$ and level 0 $\asln$ cases. In \S \ref{classicaluntwistedtoroidal} we generalize these results to $\mathfrak {sl}_{n+1} \otimes \mathbb C[t^{\pm 1},s^{\pm 1}]$, giving our first main result. Finally in \S \ref{SWmultiloop} we use a different approach to give our second main result, a generalization to the case of more than two loops.

\section{Schur-Weyl Duality for $L(\sln)$}\label{classicaluntwistedaffine}

\subsection{Preliminary Definitions} In this section, we recall the Schur-Weyl duality result in the case of classical affine Lie algebras of type $A$, which is \cite{Flicker} Theorem 1.1. 

Recall that $\asln$ is the Lie algebra generated by $x_i^{\pm}, h_i$, where $i \in [0 , n ]$ (for integers $m,p$ with $m \leq p$, the notation $[m,p]$ means $\{ m, m+1, \ldots , p \}$), subject to relations, 
\begin{gather*} [h_i, h_j] = 0,\ \ [h_i, x_j^{\pm}] = \pm \alpha_j (h_i) x_j^{\pm} = \pm a_{ij} x_j^{\pm},\ \ 
 [x_i^{+}, x_j^{-}] = \delta_{ij} h_i,\ \
(\text{ad }x_i^{\pm})^{1-a_{ij}} x_j^{\pm} = 0,\end{gather*}
where $i,j \in [ 0 , n ]$, and  $a_{ij}$ are the entries of the Cartan matrix of type $A_n^{(1)}$.
The Lie algebra $\sln$ is the subalgebra generated by $x_i^\pm, h_i$, with $i \in [1,n]$. Let $L(\sln) :=\sln \otimes \C[t,t^{-1}]$, where $t$ is an indeterminate, be the Lie algebra  with the commutator given by $[a\otimes f, b\otimes g]=[a,b]\otimes fg$. Then $\asln$ is the universal central extension of $L(\sln)$ with one--dimensional center spanned by $h_0+\cdots +h_n$.  

Let $\mathfrak h$ be the subalgebra spanned by the elements $h_i$, $1\le i\le n$ and define $\varepsilon_i\in\mathfrak  h^*$ by requiring   $\varepsilon_i(h_j)=\delta_{i,j} - \delta_{i,j+1}$, and set $\lambda_i = \varepsilon_1 + \cdots + \varepsilon_i$. Then  $R^+=\{\varepsilon_i-\varepsilon_j: 1\le i<j\le n+1\}$ is a set of positive roots for $\mathfrak{sl}_{n+1}$ with respect to $\mathfrak h$ and $\alpha_i=\varepsilon_i-\varepsilon_{i+1}$ for $1\le i\le n$ is a set of simple roots for the pair $\h, \sln$.

\begin{defn}\label{classicallimitahlq}
Define $\asl$ to be the unital associative algebra over $\cc$ with generators $\sigma^{\pm 1}_i$, \ $i \in [1, \ell-1 ]$, and $\mathbf{y}_p$,  \ $p \in [ 1 , \ell ]$ with relations
\begin{gather*}\sigma_k \sigma^{-1}_k = \sigma^{-1}_k \sigma_k = 1,\ \ 
\sigma_k \sigma_{k+1} \sigma_k = \sigma_{k+1} \sigma_k \sigma_{k+1},\ \ 
\sigma_k \sigma_j = \sigma_j \sigma_k ,\ \ \vert k-j \vert > 1, \ \ 
 \sigma_k^2 = 1, \\ 
 \mathbf{y}_m \mathbf{y}_m^{-1} = \mathbf{y}_m^{-1} \mathbf{y}_m = 1,\ \ 
 \mathbf{y}_m \mathbf{y}_p = \mathbf{y}_p \mathbf{y}_m,\\ 
\mathbf{y}_m \sigma_k = \sigma_k \mathbf{y}_m,\ \  m \notin \{ k, k+1 \},\ \
\sigma_k \mathbf{y}_k \sigma_k = \mathbf{y}_{k+1},
\end{gather*}
for all applicable $m,p, k,j$.
\end{defn}

\noindent An elementary argument shows that the subalgebra generated by $\sigma_i$, $1\le i\le \ell-1$ is isomorphic to the group algebra of the symmetric group on $\ell$ letters and $\asl$ is  the group algebra of $\Z^{\ell} \rtimes \Sl$.

\subsection{Classical and Affine Schur-Weyl Duality}\label{classicalaffineswd}
Throughout this paper $V$ will denote the representation of $\asln$ with basis $\{ v_1 , \ldots , v_{n+1} \}$ and with action
\begin{gather*} x_i^{+}.v_r = \delta_{r,i+1} v_{r-1},\ \  x_i^{-}.v_r = \delta_{r,i} v_{r+1},\ \ h_i.v_r = (\delta_{r,i} - \delta_{r,i+1}) v_{r},\\ 
x_{0}^{-}.v_r = \delta_{r,n+1} v_{1},\ \ x_{0}^{+}.v_r = \delta_{r,1} v_{n+1},\ \ 
 h_{0}.v_r = -(h_1 + \cdots + h_n).v_{r} = (\delta_{r,n+1} - \delta_{r,1}) v_{r},\end{gather*}
where  $i \in [ 1 , n ]$ and $v_{0} = v_{n+2} = 0$. Note that $V$ is just the natural representation of $\mathfrak{sl}_{n+1}$.  Given $\ell\ge 1$ the representation $ V^{\otimes \ell}$ is completely reducible as an $\mathfrak{sl}_{n+1}$--module. Any $\mathfrak{sl}_{n+1}$--module isomorphic to a submodule of this $\ell$--fold tensor product will be said to be of degree $\ell$. \\\\
It is well--known that the natural permutation action of $\Sl$ on $V^{\otimes\ell}$ commutes with the action of $\mathfrak{sl}_{n+1}$.
\\\\
We recall the classical Schur-Weyl duality for $\sln$. See for instance \cite{Fulton} \S 6 and \S 15.

\begin{prop}\label{slnleftaction}
Equip $M \otimes_{\C[\Sl]} \vl$ with the natural left $\usln$--module structure induced by that on $\vl$ (that is, $x.(m \otimes \mathbf{w}) = m \otimes x.\mathbf{w}$ for $m \in M, \mathbf{w} \in \vl$ and $x\in\mathfrak{sl}_{n+1}$). Then, if $\ell \leq n$, the functor $M \to M \otimes_{\C[\Sl]} \vl$ is an equivalence from the category of right finite dimensional $\Sl$--modules to the category of left finite dimensional $\usln$--modules of degree $\ell$.\hfill\qedsymbol
\end{prop}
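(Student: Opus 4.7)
The plan is to reduce the statement to the classical Schur--Weyl bimodule decomposition of $\vl$ and then invoke semisimple-category formalism; no quasi-inverse needs to be constructed by hand. First I would cite the classical Schur--Weyl decomposition (\cite{Fulton}, \S 6 and \S 15): as a $\usln \otimes \C[\Sl]$-bimodule,
$$\vl \;\cong\; \bigoplus_\lambda L_\lambda \otimes S_\lambda,$$
where the sum ranges over partitions $\lambda$ of $\ell$ with at most $n+1$ parts, $L_\lambda$ is the irreducible $\sln$-module attached to $\lambda$, and $S_\lambda$ is the associated Specht module of $\Sl$. The hypothesis $\ell \le n$ enters precisely at this point: every partition of $\ell$ has at most $\ell \le n < n+1$ parts, so every irreducible $\Sl$-module appears as some $S_\lambda$, and the $L_\lambda$ that appear are exactly the simple $\usln$-modules of degree $\ell$.

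Next I would verify that $F(M) := M \otimes_{\C[\Sl]} \vl$ is a $\C$-linear, additive, exact functor landing in the target category. Exactness follows because $\C[\Sl]$ is semisimple and hence $\vl$ is $\C[\Sl]$-flat. For the degree-$\ell$ condition, $F(M)$ is a $\usln$-module quotient of $M \otimes_\C \vl \cong \vl^{\oplus \dim_\C M}$, so it has degree $\ell$ as required.

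The key computation is to evaluate $F$ on a complete set of simples. Let $S_\lambda^*$ denote the right $\Sl$-module dual to $S_\lambda$; these exhaust the simple finite-dimensional right $\Sl$-modules. Using the bimodule decomposition together with the orthogonality $S_\lambda^* \otimes_{\C[\Sl]} S_\mu = \delta_{\lambda\mu}\,\C$ (valid because $\C[\Sl]$ is semisimple over $\C$), a one-line calculation gives $F(S_\lambda^*) = L_\lambda$. Since both source and target are semisimple $\C$-linear abelian categories, and $F$ is exact, $\C$-linear, and sends each simple $S_\lambda^*$ to a nonzero simple $L_\lambda$ with induced isomorphisms on endomorphism rings ($\End(S_\lambda^*) \cong \C \cong \End(L_\lambda)$), general nonsense implies $F$ is an equivalence.

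The only substantive input is the classical bimodule decomposition itself, which is quoted from Fulton; everything else is formal. The one point worth flagging is the role of the hypothesis $\ell \le n$: it is precisely what guarantees essential surjectivity, since if $\ell$ were large enough that some partition of $\ell$ had more than $n+1$ parts, the corresponding Specht module would be absent from $\vl$ and $F$ would fail to hit the associated right $\Sl$-module.
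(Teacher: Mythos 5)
Your proof is correct and follows the standard argument behind the result the paper simply cites from Fulton (the statement in the paper is stated with a closing \qedsymbol and no proof, precisely because it is classical); so you are reproducing the intended approach, not replacing it. One small inaccuracy worth fixing: you attribute the role of $\ell \le n$ to essential surjectivity, but essential surjectivity of $\mathcal{F}$ holds for \emph{every} $\ell$, since in the decomposition $V^{\otimes\ell} \cong \bigoplus_{\lambda} L_\lambda \otimes S_\lambda$ the only $L_\lambda$ occurring are those with $\lambda \vdash \ell$ and at most $n+1$ rows, and those are exactly the simples of degree $\ell$, all of which are therefore hit. What the bound actually protects is \emph{faithfulness}: if $\ell > n+1$, a partition $\lambda \vdash \ell$ with more than $n+1$ rows has $S_\lambda$ absent from $V^{\otimes\ell}$, so $\mathcal{F}(S_\lambda^*) = 0$ and the functor kills a nonzero object. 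With $\ell \le n$ every $\lambda \vdash \ell$ has at most $\ell \le n$ parts, none is killed, and your endomorphism-ring argument then yields full faithfulness; combined with the (automatic) essential surjectivity this gives the equivalence.
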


Next  we state the Schur-Weyl duality for $\asln$ at level 0 from \cite{Flicker} Theorem 1.1. 
\begin{thm}\label{nonquantumuntwistedequiv}
Fix integers $\ell \geq 0, n \geq 1$. There exists a functor $\mathcal{F}$ from the category $\text{Rep } \asl$ of finite dimensional right $\asl$--modules to the category $\text{Rep}_{\ell} (\asln)$ of finite dimensional level 0 left $\asln$--modules of degree $\ell$ defined as follows. If $M$ is an $\asl$--module, then the $\sln$--module structure on $\mathcal{F}(M) = M \otimes_{\C[\Sl]} \vl$ extends to an $\asln$--module structure by
$$\displaystyle x_{0}^{\pm}.(m \otimes\mathbf{v}) = \sum_{j=1}^{\ell} m.\mathbf{y}_j^{\pm 1} \otimes (x_{\theta}^{\mp})_j.\mathbf{v},\ \ 
  h_{0}.(m \otimes\mathbf{v}) =  m \otimes (-h_{\theta})^{\otimes \ell}.\mathbf{v},$$
where $m \in M$ and  $\mathbf{v} \in \vl$. 
Further, $\mathcal{F}$ is an equivalence of categories if $\ell \leq n$.\hfill\qedsymbol
\end{thm}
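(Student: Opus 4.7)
The plan is to first verify the formulas define a well-defined module structure, then check the Chevalley-Serre relations of $\asln$ and the level-zero condition, and finally build an inverse functor using Proposition~\ref{slnleftaction}.

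For well-definedness on $M\otimes_{\cc[\Sl]}\vl$, I would check that $x_0^\pm$ and $h_0$ respect the balanced tensor product relation $m\sigma_k\otimes\mathbf{v}=m\otimes\sigma_k\mathbf{v}$. Since $\Sl$ acts on $\vl$ by place permutation, $\sigma_k(X)_j\sigma_k=(X)_{\sigma_k(j)}$ for any operator $X$ on $V$. Computing $x_0^\pm.(m\sigma_k\otimes\mathbf{v})$ and $x_0^\pm.(m\otimes\sigma_k\mathbf{v})$, reindexing the sum over $j$, and applying the relations $\sigma_k\mathbf{y}_k\sigma_k=\mathbf{y}_{k+1}$ together with $\sigma_k\mathbf{y}_j=\mathbf{y}_j\sigma_k$ for $j\notin\{k,k+1\}$, shows the two expressions agree modulo the $\Sl$-relations. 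The corresponding check for $h_0$ is immediate since it does not involve the $\mathbf{y}_j$'s. Functoriality is then automatic: an $\Sl$-module map $f\colon M\to M'$ induces $f\otimes\mathrm{id}$, which is $\asln$-equivariant because the formulas for $x_0^\pm$ and $h_0$ act in a ``decoupled'' way on the two tensor factors.

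Next, the Chevalley-Serre relations of $\asln$ must be verified. The cross-relations $[h_0,x_i^\pm]=\pm a_{0i}x_i^\pm$ and $[h_i,x_0^\pm]=\pm a_{i0}x_0^\pm$ reduce to the computation $\theta(h_i)=\delta_{i,1}+\delta_{i,n}$ in $\sln$, and $[h_0,x_0^\pm]=\pm 2x_0^\pm$ follows from $[h_\theta,x_\theta^\mp]=\mp 2x_\theta^\mp$ applied in each tensor slot. The central calculation is
\begin{equation*}
[x_0^+,x_0^-].(m\otimes\mathbf{v}) \;=\; \sum_{i,j}m\mathbf{y}_i\mathbf{y}_j^{-1}\otimes[(x_\theta^-)_i,(x_\theta^+)_j]\mathbf{v},
\end{equation*}
which collapses, since cross terms $i\neq j$ vanish and $[x_\theta^-,x_\theta^+]=-h_\theta$, to $m\otimes(-h_\theta)^{\otimes\ell}\mathbf{v}=h_0.(m\otimes\mathbf{v})$. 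The Serre relations $(\operatorname{ad}x_0^\pm)^{1-a_{0i}}x_i^\pm=0$ reduce similarly to the corresponding Serre-type identities for $x_\theta^\mp$ in $\sln$. Finally, since $h_0+h_1+\cdots+h_n$ acts as $0$ on $V$, it acts as $0$ on $\mathcal{F}(M)$, so the module has level zero.

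For the equivalence when $\ell\leq n$, Proposition~\ref{slnleftaction} already yields an equivalence at the underlying $\sln$-module and $\Sl$-module level, so the task is to match the additional $\mathbf{y}_j^{\pm 1}$-structure on $M$ with the additional $x_0^\pm,h_0$-structure on $\mathcal{F}(M)$. The inverse functor is $\mathcal{G}(W)=\operatorname{Hom}_{\sln}(\vl,W)$ with its natural right $\Sl$-action by precomposition; the operators $\mathbf{y}_j^{\pm 1}$ on $\mathcal{G}(W)$ are extracted from the fact that the level-zero action makes $W$ into a module for $L(\sln)$, so each element $x\otimes t^{\pm 1}$ yields an operator on $W$ that, together with the weight decomposition of $\vl$, lets one isolate the $t^{\pm 1}$-contribution on each of the $\ell$ natural-representation slots. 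The main obstacle is verifying that these operators satisfy the defining relations of $\asl$, especially $\sigma_k\mathbf{y}_k\sigma_k=\mathbf{y}_{k+1}$ and the invertibility of $\mathbf{y}_j$; here the hypothesis $\ell\leq n$ is essential, since through Proposition~\ref{slnleftaction} it guarantees that $\operatorname{End}_{\sln}(\vl)\cong\cc[\Sl]$, so the extracted $\mathbf{y}_j$ lie in the $\Sl$-centralizer and interact consistently with the $\sigma_k$ to produce a genuine $\asl$-module structure inverse to $\mathcal{F}$.
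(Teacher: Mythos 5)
The paper does not prove this theorem; it is imported verbatim from \cite{Flicker} (Theorem~1.1 there), as signalled by the terminal \qedsymbol. So your proposal is, of necessity, a reconstruction rather than a comparison with an in-paper argument. That said, your plan is sound and closely mirrors the template the paper itself uses in \S\ref{everyrepnfromfunctor} when it proves the analogous (and strictly harder) toroidal statement, Theorem~\ref{classicaluntwistedtoroidalequiv}. Your verification of the bracket $[x_0^+,x_0^-]=h_0$ is correct: after relabeling, the off-diagonal terms cancel in pairs because the $\mathbf{y}_j$ commute and $(x_\theta^\mp)_i$, $(x_\theta^\pm)_j$ commute for $i\neq j$, leaving the diagonal $\sum_j m\otimes(-h_\theta)_j.\mathbf{v}$; the other Serre relations do reduce slot-by-slot as you say. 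Where your two approaches diverge is in the equivalence direction. You propose constructing an explicit inverse $\mathcal{G}(W)=\operatorname{Hom}_{\sln}(\vl,W)$ and then ``extracting'' the $\mathbf{y}_j^{\pm1}$ from the $x_0^\pm$ action; the paper instead proves full faithfulness and essential surjectivity of $\mathcal{F}$, and the extraction step is where all the real work lies. In the paper's own toroidal proof this is done via Lemmas~\ref{injectivelemma}--\ref{Fprop74}: one evaluates $x_0^\pm(k), h_i(k)$ on carefully chosen weight vectors $u^\pm_{i,j}$ with $\ell$ distinct components (so that $m\mapsto m\otimes\mathbf{v}$ is injective), reads off endomorphisms $\alpha_{j,z}$ of $M$, propagates by an induction on the weights of $\mathbf{v}$ to all of $\vl$, and then checks the defining relations of the affine symmetric group one at a time. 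Your sketch --- ``isolate the $t^{\pm1}$-contribution per slot via the weight decomposition, using $\operatorname{End}_{\sln}(\vl)\cong\cc[\Sl]$'' --- names the right tools but leaves this propagation entirely implicit, including the nontrivial verification that the extracted operators are invertible and satisfy $\sigma_k\mathbf{y}_k\sigma_k=\mathbf{y}_{k+1}$. If you flesh it out, the paper's sequence Lemma~\ref{injectivelemma}, Lemma~\ref{Fprop72}, Lemma~\ref{Fprop73}, Lemma~\ref{Fprop74} (specialized to a single variable $\mathbf{y}$) is exactly what you need to write.
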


\begin{rmk}
\cite{Flicker} states that $\mathcal{F}$ is not an equivalence when $\ell = n$; however, our $n$ is $n-1$ in \cite{Flicker}. We have adjusted the statement of the theorem accordingly. Note also the alternative statement \cite{Flicker} Theorem 1.2 which makes explicit the action of an arbitrary element of $L(\sln)$.
\end{rmk}

\section{Schur-Weyl Duality for $\sln[s^{\pm 1}, t^{\pm 1}]$}\label{classicaluntwistedtoroidal}

In this section, we state and prove a related Schur-Weyl duality statement for a quotient in the classical 2-toroidal case by extending Theorem \ref{nonquantumuntwistedequiv}. We begin by giving the requisite definitions and preliminary results following a similar structure to \S \ref{classicaluntwistedaffine}.

From now on, assume $\ell \leq n$. Let $s$ and $t$ be indeterminates and  let $\sln[s^{\pm 1}, t^{\pm 1}] := \sln \otimes \cc[s^{\pm 1}, t^{\pm 1}]$ with the obvious Lie bracket. In what follows, we shall set
$$x^\pm _i(k)=x^{\pm}_i\otimes s^k,\ \ i\in [1, n ],\ \ x_0^\pm(k)= x_{\varepsilon_1-\varepsilon_{n+1}}^\pm \otimes s^kt^{\pm 1},\ \ h_j(k)=h_j\otimes s^k,\ \ j\in [0, n], $$
where we remind the reader that $h_0=-(h_1+\cdots+h_n).$ It is known from \cite{Moody} Proposition 3.5 that these elements generate $\sln[s^{\pm 1}, t^{\pm 1}]$ with defining relations, 
\begin{gather}\label{tor1} [h_i(k) , h_j (m)] = 0,\ \ 
 [h_i (k) , x_j^{\pm} (m)] = \pm a_{ij} x_j^{\pm} (k+m),\\ \label{tor2}
 [x_i^{+} (k) , x_j^{-} (m)] = \delta_{ij}  h_i (k+m)  ,\\ \label{tor3}
 [x_i^{+} (k)  , x_i^{+} (m) ] = 0 = [x_i^{-} (k) , x_i^{-} (m) ],\ \ 
(\text{ad } x_i^{\pm} (0) )^{1-a_{ij}} x_j^{\pm} (m)  = 0,\ \ i\ne j,\end{gather}
 where $k,m \in \zz$, and the $a_{ij}$ are the entries of the Cartan matrix of type $A_n^{(1)}$.
\\\\
Extend the action of   $\sln$ on   $V$ to an action of $\sln[s^{\pm 1}, t^{\pm 1}]$ by $$x\otimes f(s,t)(v)= f(1,1)xv,\ \ x\in\sln,\ \ f\in\mathbb C[s^{\pm 1}, t^{\pm 1}].$$
In particular, we have \begin{gather*} h_j (k).v = (h_j \otimes s^k).v =  h_j. v,
 \ \ x_0^{\pm} (k).v = (x_{\theta}^{\mp} \otimes s^k t^{\pm 1}).v =  x_{\theta}^{\mp} .v,\ \
 x_i^{\pm} (k).v = (x_i^{\pm} \otimes s^k).v =  x_i^{\pm} .v,\end{gather*}
for $i \in [1, n ]$, $j \in [0, n ]$. These representations are studied in more detail in \cite{Lau}.

\begin{defn}\label{classicaluntwistedtoroidalhecke}
Define $\on{Aff}^2(\Sl)$ to be the unital associative algebra over $\C$ with generators $\sigma^{\pm 1}_i$, $i \in [1, \ell-1 ]$, $\mathbf{x}_r$, $\mathbf{y}_r$,  $r \in [1, \ell ]$ and relations
\begin{gather*}
 \sigma_k \sigma^{-1}_k = \sigma^{-1}_k \sigma_k = 1,\ \ 
\sigma_k \sigma_{k+1} \sigma_k = \sigma_{k+1} \sigma_k \sigma_{k+1},\\
 \sigma_k \sigma_j = \sigma_j \sigma_k,\ \  \vert k-j \vert > 1,\ \
 \sigma_k^2 = 1,\ \\ 
 \mathbf{y}_m \mathbf{y}_m^{-1} = \mathbf{y}_m^{-1} \mathbf{y}_m = 1,\ \ \mathbf{x}_m \mathbf{x}_m^{-1} = \mathbf{x}_m^{-1} \mathbf{x}_m = 1,\ \ 
\mathbf{y}_m \mathbf{y}_p = \mathbf{y}_p \mathbf{y}_m,\ \ \mathbf{x}_m \mathbf{x}_p = \mathbf{x}_p \mathbf{x}_m,\\
 \mathbf{y}_m \sigma_k = \sigma_k \mathbf{y}_m,\ \ \mathbf{x}_m \sigma_k = \sigma_k \mathbf{x}_m,\ \ m \notin \{ k, k+1 \}\\ \sigma_k \mathbf{y}_k \sigma_k = \mathbf{y}_{k+1},\ \ \sigma_k^{-1} \mathbf{x}_k \sigma_k^{-1} = \mathbf{x}_{k+1},\ 
 \\ \mathbf{y}_1 \cdots \mathbf{y}_{\ell} \mathbf{x}_1 = \mathbf{x}_1 \mathbf{y}_1 \cdots \mathbf{y}_{\ell},\ \  \ \ \mathbf{x}_1 \mathbf{y}_2= \mathbf{y}_2 \mathbf{x}_1
\end{gather*}
for $k \in [1, \ell-2 ]$ in the second relation; $k,j \in [1, \ell-1 ]$ otherwise; and $m,p \in [1, \ell ]$.
\end{defn}
\begin{rmk}
The subalgebra generated by $\sigma_i, \mathbf{y}_j$ is isomorphic to $\asl \cong \C [\Z^{\ell} \rtimes \Sl]$ from Definition \ref{classicallimitahlq}, and the same is true for the subalgebra generated by $\sigma_i, \mathbf{x}_j$. Further, if $\ell=1$ in Definition \ref{classicaluntwistedtoroidalhecke}, most relations above would be trivial, and the algebra $\cc[\mathbf{x}_1^{\pm 1}, \mathbf{y}_1^{\pm 1}]$ would be the coordinate algebra of the algebraic torus $\left( \cc^* \right)^2$.
\end{rmk}

\begin{rmk}Definition \ref{classicaluntwistedtoroidalhecke} specializes that of the toroidal Hecke algebra given in \cite{VV96} Definition 1.1. From \cite{VV96} Remark 1.4, the toroidal Hecke algebra takes the affine Hecke algebra (Def. 3.1 in \cite{Chari}) and adds a second set of polynomial generators. In this article, starting from \cite{VV96} Definition 1.1, we take the classical limit $q \to 1$ and take $\C$ in place of $\cc[s^{\pm 1}, t^{\pm 1}]$ (that is, we set $s=1=t$).
\end{rmk}

\begin{prop}\label{torheckecommute}
We have $\mathbf{x}_m\mathbf{y}_p= \mathbf{y}_p \mathbf{x}_m$ for all $m,p\in[1,\ell]$.
\end{prop}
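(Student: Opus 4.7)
The plan is to bootstrap the two ``seed'' commutation relations $\mathbf{x}_1\mathbf{y}_2=\mathbf{y}_2\mathbf{x}_1$ and $\mathbf{y}_1\cdots\mathbf{y}_\ell \mathbf{x}_1=\mathbf{x}_1\mathbf{y}_1\cdots\mathbf{y}_\ell$ to the full family of commutations, using the symmetric-group conjugation relations to transport them.

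First I would handle the case $m\ne p$. Since $\sigma_k^2=1$ we have $\sigma_k^{-1}=\sigma_k$, so the relations $\sigma_k\mathbf{y}_k\sigma_k=\mathbf{y}_{k+1}$ and $\sigma_k^{-1}\mathbf{x}_k\sigma_k^{-1}=\mathbf{x}_{k+1}$ together with the ``far'' commutations $\mathbf{y}_j\sigma_k=\sigma_k\mathbf{y}_j$ (and likewise for $\mathbf{x}_j$) for $j\notin\{k,k+1\}$ show that conjugation by $\sigma_k$ acts on the indexed families $\{\mathbf{y}_j\}$ and $\{\mathbf{x}_j\}$ by the transposition $(k,k+1)$. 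Hence, given any $m\ne p$, I choose a permutation $w\in S_\ell$ with $w(1)=m$ and $w(2)=p$, write $w=\sigma_{i_1}\cdots \sigma_{i_r}$, and conjugate the seed identity $\mathbf{x}_1\mathbf{y}_2=\mathbf{y}_2\mathbf{x}_1$ by this product to obtain $\mathbf{x}_m\mathbf{y}_p=\mathbf{y}_p\mathbf{x}_m$.

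Next I would attack the diagonal case $m=p$ by extracting it from the long relation. Set $Y=\mathbf{y}_1\cdots\mathbf{y}_\ell$; the defining relations give $\mathbf{x}_1 Y=Y\mathbf{x}_1$. By the previous paragraph, $\mathbf{x}_1$ already commutes with each of $\mathbf{y}_2,\dots,\mathbf{y}_\ell$, so
\begin{equation*}
Y\mathbf{x}_1=\mathbf{y}_1\mathbf{y}_2\cdots\mathbf{y}_\ell\mathbf{x}_1=\mathbf{y}_1\mathbf{x}_1\mathbf{y}_2\cdots\mathbf{y}_\ell,\qquad \mathbf{x}_1 Y=\mathbf{x}_1\mathbf{y}_1\mathbf{y}_2\cdots\mathbf{y}_\ell.
\end{equation*}
Cancelling the invertible element $\mathbf{y}_2\cdots\mathbf{y}_\ell$ from the right yields $\mathbf{y}_1\mathbf{x}_1=\mathbf{x}_1\mathbf{y}_1$. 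Conjugating this by $\sigma_1\sigma_2\cdots\sigma_{m-1}$ (using that the $\sigma_k$ act simultaneously on $\mathbf{x}_\bullet$ and $\mathbf{y}_\bullet$ indices by the same transpositions) gives $\mathbf{y}_m\mathbf{x}_m=\mathbf{x}_m\mathbf{y}_m$ for every $m\in[1,\ell]$, completing the proof.

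The one place that requires care, rather than being purely mechanical, is the observation that conjugation by a $\sigma_k$ really does realize the transposition $(k,k+1)$ on \emph{both} families of generators simultaneously; this is what lets the seed relation $\mathbf{x}_1\mathbf{y}_2=\mathbf{y}_2\mathbf{x}_1$ propagate uniformly, and this is also what lets the diagonal case be reduced, after the argument above, from $m=1$ to arbitrary $m$. I do not anticipate serious obstacles beyond this; all other steps are bookkeeping with the defining relations.
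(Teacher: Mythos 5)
Your proof is correct, and it uses the same underlying mechanism as the paper's: conjugation by the $\sigma_k$, the seed relation $\mathbf{x}_1\mathbf{y}_2=\mathbf{y}_2\mathbf{x}_1$, and the long relation $\mathbf{y}_1\cdots\mathbf{y}_\ell\mathbf{x}_1=\mathbf{x}_1\mathbf{y}_1\cdots\mathbf{y}_\ell$. The only real difference is organizational: the paper carries out an explicit double induction on $m$ and $p$, whereas you observe once and for all that conjugation by a word in the $\sigma_k$ acts on both index families $\{\mathbf{x}_j\}$, $\{\mathbf{y}_j\}$ by the corresponding permutation of $[1,\ell]$, which lets you transport the off-diagonal seed relation to arbitrary $m\ne p$ in one step and then deduce the diagonal case from the long relation by cancellation. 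This is a cleaner packaging of the same argument rather than a genuinely different one; the paper's inductive version is effectively unwinding your conjugation-by-$w$ step one transposition at a time.
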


\begin{proof} If $\ell=1$, then the equality is just a defining relation.  If $\ell=2$ then we use the defining relations as follows: $$\mathbf{y}_2 \mathbf{y}_1 \mathbf{x}_1= \mathbf{y}_1 \mathbf{y}_2 \mathbf{x}_1= \mathbf{x}_1 \mathbf{y}_1 \mathbf{y}_2= \mathbf{x}_1 \mathbf{y}_2 \mathbf{y}_1 =\mathbf{y}_2 \mathbf{x}_1 \mathbf{y}_1 \implies \mathbf{y}_1 \mathbf{x}_1= \mathbf{x}_1 \mathbf{y}_1,$$ and further use of the defining relations again give the proposition for all $m,p$.
If $\ell >2$ we proceed by a double induction. Observe $$\sigma_2 \mathbf{x}_1= \mathbf{x}_1\sigma_2,\ \ \sigma_2 \mathbf{x}_1 \mathbf{y}_2\sigma_2=\sigma_2 \mathbf{y}_2 \mathbf{x}_1\sigma_2\implies \mathbf{x}_1 \mathbf{y}_3= \mathbf{y}_3 \mathbf{x}_1,$$ and an induction proves the result for all $p\ge 2$. It follows that  $\mathbf{x}_1 \mathbf{y}_1\cdots \mathbf{y}_\ell= \mathbf{y}_1 \mathbf{x}_1 \mathbf{y}_2\cdots \mathbf{y}_\ell$ which implies that $\mathbf{x}_1\mathbf{y}_1= \mathbf{x}_1\mathbf{y}_1$.
\\\\
Assuming now that the result holds for all $1\le j<m$ we prove it for $m$. Here we find that  $$\sigma_{m-1}^{-1}\mathbf{x}_{m-1} \mathbf{y}_p\sigma_{m-1}^{-1}=\sigma_{m-1}^{-1} \mathbf{y}_p\mathbf{x}_{m-1}\sigma_{m-1}^{-1} \implies \mathbf{x}_m\mathbf{y}_p= \mathbf{y}_p\mathbf{x}_m,\ \ p\ne m,m+1,$$ and hence it suffices to prove that $\mathbf{x}_m\mathbf{y}_{m+1}= \mathbf{y}_{m+1}\mathbf{x}_m$ and $\mathbf{x}_m\mathbf{y}_m= \mathbf{y}_m\mathbf{x}_m$. To prove the first we use the inductive hypothesis on $m$ and then the defining relations to get
$$\sigma_{m-1}^{-1}\mathbf{x}_{m-1}\mathbf{y}_{m+1} \sigma_{m-1}^{-1}= \sigma_{m-1}^{-1}\mathbf{y}_{m+1}\mathbf{x}_{m-1}\sigma_{m-1}^{-1}\implies \mathbf{x}_m\mathbf{y}_{m+1}= \mathbf{y}_{m+1}\mathbf{x}_m.$$
The second equality follows by using the inductive hypothesis to see that $\sigma_{m-1}\mathbf{y}_{m-1}\mathbf{x}_{m-1} \sigma_{m-1}^{-1}=\sigma_{m-1}\mathbf{x}_{m-1}\mathbf{y}_{m-1}\sigma_{m-1}^{-1}$ and then using the  defining relations.
\end{proof}
Let $S_\ell$ act diagonally  on $\Z^{\ell} \times \Z^{\ell}$. The proof of the following is now elementary, with the identification $\sigma_c \mapsto (\mathbf{0}, \mathbf{0}, \sigma_c), y_q \mapsto (y_q , \mathbf{0}, 1), x_r \mapsto (\mathbf{0} , x_r, 1)$, and $1 \mapsto (\mathbf{0} , \mathbf{0}, 1)$, where $\mathbf{0} = (0, \ldots ,0) \in \Z^{\ell}$.

\begin{lemma}\label{torheckeisom}
$\on{Aff}^2(\Sl) \cong \C [(\Z^{\ell} \times \Z^{\ell}) \rtimes \Sl] = \C [\Z^{\ell} \rtimes (\Z^{\ell} \rtimes \Sl)]$.\hfill\qedsymbol
\end{lemma}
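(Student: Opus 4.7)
The plan is to produce mutually inverse algebra homomorphisms between $\on{Aff}^2(\Sl)$ and $\C[(\Z^\ell\times\Z^\ell)\rtimes\Sl]$, where $\Sl$ acts diagonally by simultaneously permuting the coordinates of both $\Z^\ell$ factors. Writing $e_j$ for the $j$-th standard basis vector of $\Z^\ell$, a convenient generating set for the group algebra is $(e_q,\mathbf{0};1)$, $(\mathbf{0},e_r;1)$, $(\mathbf{0},\mathbf{0};\sigma_c)$, matching the identifications given in the statement.

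First I would define $\phi\colon\on{Aff}^2(\Sl)\to\C[(\Z^\ell\times\Z^\ell)\rtimes\Sl]$ on generators as indicated and verify that every defining relation of Definition \ref{classicaluntwistedtoroidalhecke} holds in the image: the braid and quadratic relations among the $\sigma_c$ are the defining relations of $\Sl$; the commutations among the $\mathbf{y}_m$ and among the $\mathbf{x}_m$ hold because their images lie in the abelian factor; the conjugations $\sigma_k\mathbf{y}_k\sigma_k=\mathbf{y}_{k+1}$ and $\sigma_k^{-1}\mathbf{x}_k\sigma_k^{-1}=\mathbf{x}_{k+1}$ (using $\sigma_k^{-1}=\sigma_k$) realize the diagonal $\Sl$-action on coordinates; and the two ``mixed'' relations $\mathbf{y}_1\cdots\mathbf{y}_\ell\mathbf{x}_1=\mathbf{x}_1\mathbf{y}_1\cdots\mathbf{y}_\ell$ and $\mathbf{x}_1\mathbf{y}_2=\mathbf{y}_2\mathbf{x}_1$ again follow from abelianness of $\Z^\ell\times\Z^\ell$. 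Thus $\phi$ is a well-defined algebra homomorphism, and it is surjective because its image contains a generating set of the group algebra.

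For the inverse, the assignments $(e_q,\mathbf{0};1)\mapsto\mathbf{y}_q$, $(\mathbf{0},e_r;1)\mapsto\mathbf{x}_r$, $(\mathbf{0},\mathbf{0};\sigma_c)\mapsto\sigma_c$ extend to a group homomorphism $(\Z^\ell\times\Z^\ell)\rtimes\Sl\to\on{Aff}^2(\Sl)^{\times}$ provided (a) the images of the $\mathbf{x}_r$ and $\mathbf{y}_q$ all pairwise commute, and (b) conjugation by $\sigma_c$ permutes subscripts as in the diagonal action. Point (b) is immediate from the conjugation relations $\sigma_k\mathbf{y}_k\sigma_k=\mathbf{y}_{k+1}$, $\sigma_k^{-1}\mathbf{x}_k\sigma_k^{-1}=\mathbf{x}_{k+1}$ together with the relations $\mathbf{y}_m\sigma_k=\sigma_k\mathbf{y}_m$, $\mathbf{x}_m\sigma_k=\sigma_k\mathbf{x}_m$ for $m\notin\{k,k+1\}$. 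Point (a), which is the only nontrivial obstacle, is exactly the content of Proposition \ref{torheckecommute} combined with the two abelian relation families in Definition \ref{classicaluntwistedtoroidalhecke}. Extending linearly produces an algebra map $\psi$, and the compositions $\phi\circ\psi$ and $\psi\circ\phi$ fix the respective generators and are therefore identities. This last step is why the authors describe the proof as elementary: the hard work was already done in Proposition \ref{torheckecommute}, after which everything reduces to generator-by-generator bookkeeping.
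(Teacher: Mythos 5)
Your proposal is correct and follows essentially the same route as the paper: both build the isomorphism generator-by-generator via the identification $\sigma_c\mapsto(\mathbf 0,\mathbf 0,\sigma_c)$, $\mathbf y_q\mapsto(e_q,\mathbf 0,1)$, $\mathbf x_r\mapsto(\mathbf 0,e_r,1)$, with Proposition~\ref{torheckecommute} supplying the one nontrivial commutation needed to see that the images of the $\mathbf x_r$ and $\mathbf y_q$ generate a copy of $\Z^{2\ell}$. The paper only records this as ``elementary'' after the identification; you have simply written out the bookkeeping.
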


\subsection{The main result} Let $\text{Rep}\left(\on{Aff}^2(\Sl)\right)$ be the category of finite dimensional right $\on{Aff}^2(\Sl)$--modules, and let $\text{Rep}_{\ell}(\sln[s^{\pm 1}, t^{\pm 1}])$ be the category of finite dimensional modules for $\sln[s^{\pm 1}, t^{\pm 1}]$ of degree $\ell$, namely those which occur in the $\ell$--th tensor power of $V$. Given $a\in \sln[s^{\pm 1}, t^{\pm 1}]$ and an element $\mathbf{v}= v_1\otimes\cdots\otimes  v_\ell\in V^{\otimes \ell}$ we denote $$(a)_j\mathbf{v}= v_1\otimes\cdots v_{j-1}\otimes av_j\otimes v_{j+1}\otimes\cdots\otimes v_\ell.$$
\begin{prop} Suppose that $M$ is a finite dimensional right module for $\on{Aff}^2(\Sl)$ and set $$\mathcal F(M)= M\otimes_{\mathbb C[\Sl]} V^{\otimes\ell}.$$ The following formulae define an action of $\sln[s^{\pm 1}, t^{\pm 1}]$ on $\mathcal
F(M)$: for $i\in[0,n]$ and $k\in\mathbb Z$,
$$h_i (k)(m \otimes \mathbf{v}) = \sum_{1 \leq j \leq \ell} m\mathbf{x}_j^k \otimes\big(h_i (k) \big)_j\mathbf{v},\ \  \ \ x_i^{\pm} (k)(m \otimes \mathbf{v}) = \sum_{1 \leq j \leq \ell} m\mathbf{x}_j^k\mathbf{y}_j^{\pm\delta_{i,0}} \otimes \big(x_i^{\pm} (k)\big)_j\mathbf{v},$$
for all $m\in M$ and $\mathbf{v} \in V^{\otimes \ell}$.
Moreover, if $M'$ is another finite dimensional module for $\on{Aff}^2(\Sl)$ and $f: M\to M'$ is a map of $\on{Aff}^2(\Sl)$--modules, the assignment $m\otimes \mathbf{v}\to f(m)\otimes \mathbf{v}$ extends to  a homomorphism of $\sln[s^{\pm 1}, t^{\pm 1}]$--modules.
\end{prop}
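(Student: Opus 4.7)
The plan is to repackage the two displayed formulas into a single uniform prescription
\[
(a \otimes s^k t^\epsilon)(m \otimes \mathbf{v}) = \sum_{p=1}^\ell m\, \mathbf{x}_p^k \mathbf{y}_p^\epsilon \otimes (a \otimes s^k t^\epsilon)_p \mathbf{v}, \qquad a \in \sln,\ k, \epsilon \in \zz,
\]
which specialises to the stated formulas at $\epsilon = 0$ for $h_i(k)$ and for $x_i^\pm(k)$ with $i \in [1,n]$, and at $\epsilon = \pm 1$ for $x_0^\pm(k) = x_\theta^\mp \otimes s^k t^{\pm 1}$ (so $\mathbf{y}_p^{\pm\delta_{i,0}}$ in the statement matches $\mathbf{y}_p^\epsilon$ in the Ansatz). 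Two facts will do essentially all of the work: Proposition \ref{torheckecommute}, stating that all the $\mathbf{x}_p, \mathbf{y}_q$ commute pairwise; and the defining relations of $\on{Aff}^2(\Sl)$, which yield $\sigma_c \mathbf{x}_p = \mathbf{x}_{s_c(p)} \sigma_c$ and $\sigma_c \mathbf{y}_p = \mathbf{y}_{s_c(p)} \sigma_c$, where $s_c$ is the transposition $(c, c+1)$.

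For the Lie-algebra check I would compose two such operators. Using commutativity to push each $g(\mathbf{x}_q, \mathbf{y}_q)$ past each $f(\mathbf{x}_p, \mathbf{y}_p)$ on the $M$-slot, the commutator reduces to
\[
[(a \otimes f), (b \otimes g)](m \otimes \mathbf{v}) = \sum_{p, q} m\, f(\mathbf{x}_p, \mathbf{y}_p)\, g(\mathbf{x}_q, \mathbf{y}_q) \otimes \bigl[(a)_p, (b)_q\bigr]\mathbf{v}.
\]
The bracket $[(a)_p, (b)_q]$ vanishes for $p \ne q$ (disjoint tensor slots) and equals $([a,b])_p$ for $p = q$, so the sum collapses to $\sum_p m\, (fg)(\mathbf{x}_p, \mathbf{y}_p) \otimes ([a,b])_p \mathbf{v}$, which is precisely the image under the Ansatz of $[a \otimes f, b \otimes g] = [a,b] \otimes fg \in \sln \otimes \cc[s^{\pm 1}, t^{\pm 1}]$. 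In particular every one of the toroidal relations \eqref{tor1}--\eqref{tor3} is verified at a single stroke, without the need to handle the Serre, $[h,x]$ and $[x^+,x^-]$ relations separately.

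To descend the action to $M \otimes_{\cc[\Sl]} V^{\otimes \ell}$, I would apply the Ansatz to $m\sigma_c \otimes \mathbf{v}$, commute $\sigma_c$ through the monomial via $\sigma_c\, f(\mathbf{x}_p, \mathbf{y}_p) = f(\mathbf{x}_{s_c(p)}, \mathbf{y}_{s_c(p)})\, \sigma_c$ and through the tensor action via $\sigma_c\, (a)_p = (a)_{s_c(p)}\, \sigma_c$ on $V^{\otimes \ell}$, and then reindex $p \mapsto s_c(p)$ to land on $(a \otimes f)(m \otimes \sigma_c \mathbf{v})$; since the $\sigma_c$ generate $\Sl$, this proves well-definedness. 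Functoriality is immediate thereafter: any right $\on{Aff}^2(\Sl)$-module map $M \to M'$ respects right multiplication by each monomial $\mathbf{x}_p^k \mathbf{y}_p^\epsilon$, while the $V^{\otimes \ell}$-slot is unchanged. The main point requiring care, more bookkeeping than an obstacle, is the $i=0$ matching: one must verify that the twist $x_0^\pm(k) \leftrightarrow x_\theta^\mp \otimes s^k t^{\pm 1}$ together with the extended $V$-action (where $f(s,t)$ acts by $f(1,1)$) reproduces exactly the $\mathbf{y}_p^{\pm \delta_{i,0}}$ and the correct root-vector signs appearing in the displayed formula, after which the argument is uniform in $i \in [0,n]$.
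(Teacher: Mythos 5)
Your proof is correct and takes a genuinely different, cleaner route than the paper's. The paper treats the formulae as specifying the action of the Moody--Rao--Yokonuma generators $h_i(k), x_i^\pm(k)$ of $\sln[s^{\pm 1},t^{\pm 1}]$ and then verifies the defining relations \eqref{tor1}--\eqref{tor3} one family at a time, deferring the compatibility with the $\C[\Sl]$-tensor product to the argument of \cite{Flicker} \S 5 and declaring the relation check ``straightforward, if tedious.'' You instead promote the two generator formulae to a uniform Ansatz $(a\otimes s^k t^\epsilon)(m\otimes\mathbf v)=\sum_p m\,\mathbf x_p^k\mathbf y_p^\epsilon\otimes (a)_p\mathbf v$ on the full monomial basis of $\sln[s^{\pm 1},t^{\pm 1}]$ and verify the single identity $[\rho(X),\rho(Y)]=\rho([X,Y])$; this collapses all of \eqref{tor1}--\eqref{tor3} (including the Serre relations) into one computation, using only Proposition \ref{torheckecommute} (pairwise commutativity of all $\mathbf x_p,\mathbf y_q$) and the elementary slot identity $[(a)_p,(b)_q]=\delta_{p,q}([a,b])_p$. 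The trade-off is that your route requires first observing that the Ansatz is $\C$-linear and consistent on a basis (with $x_0^\pm(k)=x_\theta^\mp\otimes s^k t^{\pm 1}$ forcing $\epsilon=\pm 1$), whereas the paper works entirely with the MRY generators and Moody's Proposition 3.5. Your well-definedness argument over $\C[\Sl]$ ($\sigma_c\mathbf x_p^k\mathbf y_p^\epsilon=\mathbf x_{s_c(p)}^k\mathbf y_{s_c(p)}^\epsilon\sigma_c$ and $\sigma_c(a)_p=(a)_{s_c(p)}\sigma_c$, then reindex) is the same in substance as the paper's; the functoriality remark is also the same. On balance yours is the tidier argument, and it is worth noting that the commutativity $\mathbf x_m\mathbf y_p=\mathbf y_p\mathbf x_m$ is doing all the real work, which the paper's terse proof leaves implicit.
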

\begin{proof} 
A direct verification using the relations in $\on{Aff}^2(\Sl)$ shows that $\mathcal{F}$ is well-defined on objects (that is, $x.(m.\sigma_i \otimes_{\C[\Sl]} \mathbf{v}) = x.(m \otimes_{\C[\Sl]} \sigma_i.\mathbf{v})$ for each $i$, with $x$ a generator of $\sln[s^{\pm 1}, t^{\pm 1}]$, and $m \in M, \mathbf{v} \in V^{\otimes \ell}$) as in \cite{Flicker} \S5. It is elementary to see that $\mathcal{F}$ is well-defined on morphisms, a homomorphism of $\sln[s^{\pm 1}, t^{\pm 1}]$--modules, and  that $\mathcal{F}(\textbf{1}_{\text{Rep}\left(\on{Aff}^2(\Sl)\right)}) = \textbf{1}_{\text{Rep}_{\ell}\left(\sln[s^{\pm 1}, t^{\pm 1}]\right)}$ and $\mathcal{F}(f \circ g) = \mathcal{F}(f) \circ \mathcal{F} (g)$. Thus $\mathcal{F}$ exists and is a well-defined, covariant functor.
To check that the formulae give an action of $\sln[s^{\pm 1}, t^{\pm 1}]$ it suffices to prove that the relations in equations \eqref{tor1}-\eqref{tor3} are satisfied. But this is a straightforward, if tedious, computation. The final statement of the proposition is also obvious.

\end{proof}

We now state our main theorem of this section. 

\begin{thm}\label{classicaluntwistedtoroidalequiv} Retain the notation established so far. 
There exists a covariant functor $\mathcal{F}: \text{Rep}\left(\on{Aff}^2(\Sl)\right) \to \text{Rep}_{\ell}(\sln[s^{\pm 1}, t^{\pm 1}])$ which sends $M\to\mathcal F(M)$  and the morphism $f:M\to M'$ to the morphism $\mathcal{F}(f)$. Further, $\mathcal{F}$ is an equivalence of categories when $\ell \leq n$.
\end{thm}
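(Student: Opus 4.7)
The preceding proposition has already established $\mathcal{F}$ as a covariant functor, so it remains to construct an essential inverse $\mathcal{G}$ under the assumption $\ell \leq n$. My plan is to extend the affine Schur--Weyl duality of Theorem \ref{nonquantumuntwistedequiv} via a weight-space construction. Concretely, for $N \in \text{Rep}_\ell(\sln[s^{\pm 1}, t^{\pm 1}])$, I restrict $N$ to the subalgebra $L(\sln) \subseteq \sln[s^{\pm 1}, t^{\pm 1}]$ of $s$-degree zero, regarded as a level $0$ $\asln$-module, and apply the quasi-inverse provided by Theorem \ref{nonquantumuntwistedequiv} to obtain a canonical right $\asl$-module structure on the weight space $\mathcal{G}(N) := N_{\lambda_\ell}$, where $\lambda_\ell = \varepsilon_1 + \cdots + \varepsilon_\ell$. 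I then extend this to an $\on{Aff}^2(\Sl)$-module structure by defining actions of the extra generators $\mathbf{x}_j^{\pm 1}$.

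The definition of the $\mathbf{x}_j$-action is guided by the following observation. For $j \in [1,\ell]$ and $k \in \mathbb{Z}$, set
\[
H_j(k) := h_j(k) + h_{j+1}(k) + \cdots + h_\ell(k) \in \sln[s^{\pm 1}, t^{\pm 1}].
\]
If $N = \mathcal{F}(M)$ and $w_{\text{id}} := v_1 \otimes \cdots \otimes v_\ell \in V^{\otimes \ell}$, a short telescoping computation using $h_i v_r = (\delta_{r,i} - \delta_{r,i+1}) v_r$ together with the formula of the preceding proposition yields $H_j(k) \cdot (m \otimes w_{\text{id}}) = (m\mathbf{x}_j^k) \otimes w_{\text{id}}$. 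This motivates the definition $m \cdot \mathbf{x}_j^k := H_j(k) \cdot m$ on $\mathcal{G}(N) = N_{\lambda_\ell}$, extended $\Sl$-equivariantly to all of $N_{\lambda_\ell}$. The hypothesis $\ell \leq n$ enters here: it ensures both that $h_\ell \in \sln$ is a legitimate generator and that the index $\ell + 1$ never appears in $w_{\text{id}}$, so the telescoping truncates correctly.

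The main task is then to verify that these $\mathbf{x}_j$-actions, together with the pre-existing $\asl$-action, satisfy all the relations of $\on{Aff}^2(\Sl)$ from Definition \ref{classicaluntwistedtoroidalhecke}. The pairwise commutativity $\mathbf{x}_m \mathbf{x}_p = \mathbf{x}_p \mathbf{x}_m$ follows immediately from \eqref{tor1}; the relations $\sigma_k^{-1}\mathbf{x}_k \sigma_k^{-1} = \mathbf{x}_{k+1}$ and $\mathbf{x}_m \sigma_k = \sigma_k \mathbf{x}_m$ for $m \notin \{k,k+1\}$ follow from the way $\Sl$ permutes the summands of $H_j(k)$ through its action on tensor positions; and, in view of Proposition \ref{torheckecommute}, the cross relations reduce to the single commutation $\mathbf{x}_j \mathbf{y}_p = \mathbf{y}_p \mathbf{x}_j$, which follows from the Lie brackets $[h_i(k), x_0^\pm(m)] = \pm a_{i0} x_0^\pm(k+m)$ of \eqref{tor1} combined with the explicit formula for the $\mathbf{y}_j$-action coming from Theorem \ref{nonquantumuntwistedequiv}. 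This last cross-relation verification is the main obstacle: it is direct but tedious, analogous in style to the well-definedness check carried out in the preceding proposition.

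Once these relations are confirmed, functoriality of $\mathcal{G}$ is inherited from that of the affine functor $\mathcal{F}_0$ of Theorem \ref{nonquantumuntwistedequiv}, and the natural isomorphisms $\mathcal{G}\mathcal{F} \cong \text{id}$ and $\mathcal{F}\mathcal{G} \cong \text{id}$ reduce to the corresponding statements for $\mathcal{F}_0$, since our definitions force the $\mathbf{x}_j$-actions on both sides to agree by construction.
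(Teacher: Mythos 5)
Your high-level plan---construct an explicit quasi-inverse $\mathcal{G}$ by extracting the $\mathbf{x}_j$-action from the $s$-loop Cartan elements acting on the weight space $N_{\lambda_\ell}$---is the right idea and is essentially what the paper does, with your telescoping sum $H_j(k)=\sum_{i=j}^{\ell}h_i(k)$ being a clean repackaging of the paper's $\alpha_{j,h_i(k)}$ operators from Lemma \ref{Fprop72}. But the proposal systematically underestimates where the mathematical content actually lies: nearly all of the difficulty is in the verifications that you dismiss as ``direct but tedious.''

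The first concrete gap is multiplicativity. You define $m.\mathbf{x}_j^k := H_j(k).m$ for each $k\in\Z$, but for this to be a module structure over $\C[\mathbf{x}_j^{\pm1}]$ you need $H_j(k_1)H_j(k_2)=H_j(k_1+k_2)$ and $H_j(k)H_j(-k)=\mathrm{id}$ as operators on $N_{\lambda_\ell}$. These are emphatically \emph{not} identities in $U(\sln[s^{\pm1},t^{\pm1}])$, and proving them on a general degree-$\ell$ module $N$ is a genuine theorem; it is the content of the paper's Lemma \ref{alphaequality}, which also establishes the equally necessary (but unaddressed in your sketch) fact that $\alpha_{p,h_i(k)}$ is independent of $i$ --- without which the very telescoping you rely on is circular, since you derived it assuming $N=\mathcal{F}(M)$.

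The second concrete gap is your claim that $\mathcal{F}\mathcal{G}\cong\mathrm{id}$ ``reduces to'' the affine case. After defining the $\mathbf{x}_j$-action, you know only that $H_j(k).(m\otimes w_{\mathrm{id}})$ has the desired form on the single weight vector $w_{\mathrm{id}}$. To conclude that $N\cong\mathcal{F}(\mathcal{G}(N))$ as $\sln[s^{\pm1},t^{\pm1}]$-modules, you must show that $h_i(k)$ and $x_i^{\pm}(k)$ act on \emph{arbitrary} $m\otimes\mathbf{v}$ by the formula of the preceding proposition. Propagating from $w_{\mathrm{id}}$ to arbitrary weight vectors using the $\sln$-commutation relations is exactly what the paper's Lemma \ref{Fprop73} does, via an induction on the multiplicities of $v_i, v_{i+1}$ in $\mathbf{v}$; it is not an automatic consequence of the affine duality. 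Similarly, your reduction of the cross-relations to $\mathbf{x}_j\mathbf{y}_p=\mathbf{y}_p\mathbf{x}_j$ via Proposition \ref{torheckecommute} is valid logically, but verifying even $\mathbf{x}_1\mathbf{y}_2=\mathbf{y}_2\mathbf{x}_1$ and $\mathbf{y}_1\cdots\mathbf{y}_{\ell}\mathbf{x}_1=\mathbf{x}_1\mathbf{y}_1\cdots\mathbf{y}_{\ell}$ requires both the injectivity Lemma \ref{injectivelemma} and the full apparatus of Lemmas \ref{Fprop72}--\ref{alphaequality}; the paper's Lemma \ref{Fprop74} carries this out and it is not a short computation. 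In short, your proposal correctly identifies the definition but omits the four lemmas that constitute the actual proof of essential surjectivity.
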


\begin{rmk}
Recall, by \cite{Moody} Proposition 3.5, 
$$x^\pm _i(k)=x^{\pm}_i\otimes s^k,\ \ i\in [1, n ],\ \ x_0^\pm(k)= x_{\varepsilon_1-\varepsilon_{n+1}}^\pm \otimes s^kt^{\pm 1},\ \ h_j(k)=h_j\otimes s^k,\ \ j\in [0, n] $$
generate $\sln[s^{\pm 1}, t^{\pm 1}]$.  Taking $k=0$ reduces Theorem \ref{classicaluntwistedtoroidalequiv} to Theorem \ref{nonquantumuntwistedequiv}. Theorem \ref{classicaluntwistedtoroidalequiv} therefore extends the statement for $\asln$ to $\cta$. Note also that $\mathbf{x}$ (resp. $\mathbf{y}$) corresponds to the variable $s$ (resp. $t$).
\end{rmk}

The proof of Theorem \ref{classicaluntwistedtoroidalequiv} will be accomplished through a sequence of Lemmas in \S \ref{functordefinesrepn}, \S \ref{everyrepnfromfunctor}.
The $\ell=0$ case follows from \cite{Flicker} Cor. 15.4 where $(s,t)$ acts as $ (1,1)$, so we assume $\ell \geq 1$ below. Note that, when $\ell=1$, then $\Sl$ and many of the sums below are trivial.

\begin{subsection}{$\mathcal{F}$ is a Fully Faithful Functor}\label{functordefinesrepn}

\begin{lemma}
The functor $\mathcal{F}: \text{Rep}\left(\on{Aff}^2(\Sl)\right) \to \text{Rep}_{\ell}\left(\sln[s^{\pm 1}, t^{\pm 1}]\right)$ is fully faithful.
\end{lemma}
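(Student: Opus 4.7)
The plan is to exploit the classical Schur--Weyl duality (Proposition~\ref{slnleftaction}) at the level of $\mathfrak{sl}_{n+1}$--actions and then upgrade the resulting $S_\ell$--equivariant map to an $\on{Aff}^2(S_\ell)$--equivariant one by testing against the extra toroidal generators. On morphisms, $\mathcal F$ is defined by $f\mapsto f\otimes \mathrm{id}_{V^{\otimes\ell}}$, so because $\ell\le n$ makes Proposition~\ref{slnleftaction} an equivalence, the restriction map $\on{Hom}_{S_\ell}(M,M')\to \on{Hom}_{\mathfrak{sl}_{n+1}}(\mathcal F(M),\mathcal F(M'))$ is already a bijection. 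In particular $\mathcal F$ is faithful, and for any $\mathfrak{sl}_{n+1}[s^{\pm 1},t^{\pm 1}]$--homomorphism $g:\mathcal F(M)\to \mathcal F(M')$ there is a unique right $S_\ell$--module map $f:M\to M'$ with $g=f\otimes\mathrm{id}$. The remaining task is to show that this $f$ automatically commutes with every $\mathbf{x}_j^{\pm 1}$ and $\mathbf{y}_j^{\pm 1}$.

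For the $\mathbf{x}$--generators I would start from the commutation $g\circ h_i(k)=h_i(k)\circ g$. The explicit formula for the $h_i(k)$--action yields, for all $m\in M$, $\mathbf{v}\in V^{\otimes\ell}$ and $k\in\mathbb Z$,
$$\sum_{j=1}^{\ell}\phi_j(m)\otimes (h_i)_j\mathbf{v}=0\quad\text{in }\mathcal F(M'),\qquad \phi_j(m):=f(m\mathbf{x}_j^k)-f(m)\mathbf{x}_j^k.$$
Taking $\mathbf{v}=v_{i_1}\otimes\cdots\otimes v_{i_\ell}$ with pairwise distinct indices (possible because $\ell\le n<n+1$) makes each $(h_i)_j\mathbf{v}$ a scalar multiple of $\mathbf{v}$; and since the cyclic $\mathbb{C}[S_\ell]$--submodule of $V^{\otimes\ell}$ generated by such a $\mathbf{v}$ is a copy of the regular representation and therefore free of rank one, the natural map $m'\mapsto m'\otimes\mathbf{v}$ embeds $M'\hookrightarrow\mathcal F(M')$. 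Specializing to $\mathbf{v}=v_1\otimes v_2\otimes\cdots\otimes v_\ell$ and letting $i$ range over $[0,n]$ yields a triangular system in the $\phi_j(m)$: the equation for $i=\ell$ involves only $\phi_\ell$, that for $i=\ell-1$ only $\phi_{\ell-1}-\phi_\ell$, and so on. Reading off the solution forces $\phi_j(m)=0$ for every $j$, so $f$ commutes with each $\mathbf{x}_j^k$ for all $k\in\mathbb Z$.

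The analogous step for $\mathbf{y}_j^{\pm 1}$ uses $g\circ x_0^{\pm}(0)=x_0^{\pm}(0)\circ g$. Because $x_0^{+}v_r=\delta_{r,1}v_{n+1}$, choosing $\mathbf{v}$ with $v_1$ in the $j$--th slot and distinct elements of $\{v_2,\ldots,v_\ell\}$ in the remaining slots kills every summand except the $j$--th and produces a tensor with pairwise distinct entries (here $\ell\le n$ ensures $v_{n+1}$ does not collide with the other factors); the same freeness argument then yields $f(m\mathbf{y}_j)=f(m)\mathbf{y}_j$. Replacing $x_0^{+}(0)$ by $x_0^{-}(0)$ handles $\mathbf{y}_j^{-1}$. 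Combined with the previous step, $f$ is an $\on{Aff}^2(S_\ell)$--homomorphism, which is what fullness of $\mathcal F$ requires.

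The main obstacle is book-keeping rather than any deep trick: one must extract the pointwise identities $\phi_j(m)=0$ from a relation that a priori holds only in the coequalizer $M'\otimes_{\mathbb{C}[S_\ell]}V^{\otimes\ell}$. The enabling observation throughout is that for any $\mathbf{v}\in V^{\otimes\ell}$ whose tensor factors are pairwise distinct the cyclic $\mathbb{C}[S_\ell]$--module it generates is free of rank one, a fact that relies on having enough basis vectors available, that is, on $\ell\le n$.
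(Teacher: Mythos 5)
Your argument is correct and sits within the same framework as the paper's: first invoke classical Schur--Weyl duality to obtain an $S_\ell$--module map $\phi$ with $\mathcal F^0(\phi)=g$, then show $\phi$ commutes with the extra generators by evaluating the commutation relations on test vectors $m\otimes\mathbf v$ whose tensor factors are pairwise distinct, using the resulting injectivity of $m\mapsto m\otimes\mathbf v$ (the paper's Lemma~\ref{injectivelemma}, which you re-derive via the regular-representation observation). Where you diverge in detail is instructive. For the $\mathbf x$--generators you lay out an explicit triangular system in $\phi_j(m)=f(m\mathbf x_j^k)-f(m)\mathbf x_j^k$ coming from $h_i(k)$ for $i=\ell,\ell-1,\dots,1$; this makes visible exactly where $\ell\le n$ is used (to have $h_\ell$ available), whereas the paper is terse here and just asserts the consequence. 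For the $\mathbf y$--generators the paper proves commutation with $\mathbf y_1$ (by picking $\mathbf v=v_1\otimes\cdots\otimes v_\ell$) and then inducts to $\mathbf y_r$, $r>1$, via the braid relation $\sigma_r\mathbf y_r\sigma_r=\mathbf y_{r+1}$; you instead treat every $\mathbf y_j$ directly, choosing $\mathbf v$ with $v_1$ in slot $j$ so that $x_0^{\pm}(0)$ singles out that slot. Both work; your variant avoids the induction but requires one more choice of test vector, while the paper's exploits the $\on{Aff}^1(S_\ell)$ presentation it already has in hand. Two very minor points to tighten: (1) when you say the cyclic $\mathbb C[S_\ell]$--submodule generated by a $\mathbf v$ with distinct factors is the regular representation, you should note it is in fact a \emph{direct summand} of $V^{\otimes\ell}$ as an $S_\ell$--module (it is, since $\mathbb C[S_\ell]$ is semisimple), which is what guarantees $M'\otimes_{\mathbb C[S_\ell]}(-)$ preserves the injection into $V^{\otimes\ell}$; (2) in the $\mathbf y$ step you should also verify that the surviving summand really produces $f(m\mathbf y_j)\otimes\mathbf v'$ on the left and $f(m)\mathbf y_j\otimes\mathbf v'$ on the right with the \emph{same} $\mathbf v'$, which it does because $x_0^{\pm}(0)$ acts slot-by-slot and $g$ is assumed $\mathfrak{sl}_{n+1}[s^{\pm1},t^{\pm1}]$--linear. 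Neither is a genuine gap, only a spot where a reader would want the sentence spelled out.
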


\begin{proof}
The classical Schur-Weyl duality gives us that the functor
$$\mathcal{F}^0: \C[\Sl] \textrm{-mod} \rightarrow \sln \textrm{-mod},\ \   M \mapsto M \otimes_{\C[\Sl]} V^{\otimes \ell},\ \ f\to f\otimes 1,$$ 
is  fully faithful. Since any morphism $\phi: M\to N$ of $\on{Aff}^2(\Sl)$--modules is also a morphism of $\Sl$--modules and $\mathcal F(\phi)=\mathcal F^0(\phi)$ the injectivity of $\mathcal F$ is immediate.\\\\
For surjectivity, let $\tilde\phi: M \otimes_{\C[\Sl]} V^{\otimes \ell} \rightarrow N \otimes_{\C[\Sl]} V^{\otimes \ell}$  be a map of $\mathfrak{sl}_{n+1}[t^{\pm 1}, s^{\pm 1}]$--modules and hence a map of $\mathfrak{sl}_{n+1}$--modules. Using the classical Schur--Weyl duality  we can choose an $\Sl$--module map  $\phi:M\to N$
such that $\mathcal F^0(\phi)=\tilde\phi$. It suffices to check that $\phi$ is a map of $\on{Aff}^2(\Sl)$--modules and for this, we must check that $\phi$ commutes with the generators $\mathbf{x}_j$, $\mathbf{y}_j$ for  $j\in[1,\ell]$.

  By considering $h_i(k)$ commuting with $\tilde\phi$ (using the classical Schur-Weyl duality), then $\tilde\phi$ commutes with $\mathbf{x}_p^k$
action on $m \otimes v_1 \otimes \cdots \otimes v_\ell$.

An argument identical to the one given in \cite{Flicker}, (using the action of  $x_0^+(0)$ acting on $m \otimes_{\C[\Sl]} v_1 \otimes \cdots \otimes v_\ell$ for suitable choices of $v_j$, $1\le j\le \ell$), we get that $\phi$ commutes with $\mathbf{y}_1$.

To see that $\phi$ commutes with  $\mathbf{y}_r$ for $r>1$ assume that we have proved the result for $r-1$ and use
\begin{gather*}
 \phi(m\mathbf{y}_{r+1})= \phi(m\sigma_r\mathbf{y}_r\sigma_r)= \phi(m)\sigma_r\mathbf{y}_r\sigma_r  =\phi(m)\mathbf{y}_{r+1},
\end{gather*}
where we have used the inductive hypothesis for the second equality.
\end{proof}

\end{subsection}

\begin{subsection}{$\mathcal{F}$ is an Equivalence of Categories}\label{everyrepnfromfunctor}
To complete the proof of Theorem \ref{classicaluntwistedtoroidalequiv}, it remains only to show that $\mathcal{F}$ is an equivalence of categories when $\ell \leq n$. We must show that every finite dimensional left $\mathfrak{sl}_{n+1}[t^{\pm 1}, s^{\pm 1}]$--module of degree $\ell$ is in the image of $\mathcal F$.

Thus let $W$ be a $\mathfrak{sl}_{n+1}[t^{\pm 1}, s^{\pm 1}]$--module occurring in $V^{\otimes \ell}$. Regarding this as a module for the subalgebra $\mathfrak{sl}_{n+1}[t^{\pm 1}]$ we use \cite{Flicker} to  choose an $\asl$--module $M$ so that we have an isomorphism of $\mathfrak{sl}_{n+1}[t^{\pm 1}]$--modules $W\cong M\otimes_{\C[\Sl]} V^{\otimes \ell}.$ It suffices to prove that we can define an action of $\on{Aff}^2(\Sl)$ on $M$ so that the preceding isomorphism is one of 
$\mathfrak{sl}_{n+1}[t^{\pm 1},s^{\pm 1}]$--modules.

The following will be useful. Let $\{ v_1, \ldots , v_{n+1} \}$ denote the standard basis of $V$ and for $1\le i\ne j\le n+1$, let $x_{i,j}\in \mathfrak{sl}_{n+1}$ be defined by requiring $x_{i,j} v_s=\delta_{j,s} v_i$.

\begin{lemma}\label{injectivelemma}
 Suppose that $i_1,\ldots, i_\ell\in[1,n+1]$ are distinct and set $\mathbf{v} = v_{i_1} \otimes \cdots \otimes v_{i_{\ell}} \in V^{\otimes \ell}$. Then 
 $V^{\otimes \ell} = U(\sln).\mathbf{v}$ and the map $ M\to M\otimes_{\cc[S_\ell]}V^{\otimes\ell} $ defined by $m \mapsto m \otimes\mathbf{v}$ is injective for any $\Sl$--module $M$. 
\end{lemma}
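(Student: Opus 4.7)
My plan is to dispatch the two assertions in sequence. For the generation claim $V^{\otimes\ell}=U(\sln).\mathbf v$, I will induct on $k\in\{0,1,\dots,\ell\}$ to show that every tensor of the form $v_{j_1}\otimes\cdots\otimes v_{j_k}\otimes v_{i_{k+1}}\otimes\cdots\otimes v_{i_\ell}$, with $j_1,\dots,j_k\in[1,n+1]$ arbitrary, belongs to $U(\sln).\mathbf v$. The base case $k=0$ is $\mathbf v$ itself, and the case $k=\ell$ yields the desired statement.

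For the inductive step, fix such a target $\mathbf w$ at stage $k$. If $j_k=i_k$ then $\mathbf w$ already fits the $(k-1)$-case of the hypothesis, so I may assume $j_k\ne i_k$. I would then introduce the auxiliary tensor $\mathbf w'$ obtained from $\mathbf w$ by replacing $v_{j_k}$ in position $k$ by $v_{i_k}$; the inductive hypothesis gives $\mathbf w'\in U(\sln).\mathbf v$. Applying the matrix unit $x_{j_k,i_k}\in\sln$ to $\mathbf w'$ should then yield
$$
x_{j_k,i_k}.\mathbf w' \;=\; \mathbf w \;+\; \sum_{\substack{m<k \\ j_m=i_k}} \mathbf w_m,
$$
where each error tensor $\mathbf w_m$ is obtained from $\mathbf w'$ by changing $v_{j_m}=v_{i_k}$ in position $m$ to $v_{j_k}$. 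The crucial observation here is that the distinctness of the $i_k$'s forces $i_k$ to be absent from positions $m>k$ of $\mathbf w'$, so the only contributions to the Leibniz expansion come from position $k$ itself (giving $\mathbf w$) and from those positions $m<k$ for which $j_m=i_k$. Each $\mathbf w_m$ still has $v_{i_k}$ in position $k$ and the original $v_{i_{k+1}},\dots,v_{i_\ell}$ in positions $k+1,\dots,\ell$, so it again falls under the inductive hypothesis for $k-1$; subtracting these places $\mathbf w$ in $U(\sln).\mathbf v$.

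For the injectivity claim my plan is essentially formal: the distinctness of the $i_k$'s makes the stabilizer of $\mathbf v$ under the permutation action of $S_\ell$ on $V^{\otimes\ell}$ trivial, so the $\C[S_\ell]$-submodule $F\subseteq V^{\otimes\ell}$ generated by $\mathbf v$ is free of rank one with basis $\mathbf v$. By semisimplicity of $\C[S_\ell]$, $F$ splits off as a direct summand: $V^{\otimes\ell}=F\oplus U$. The canonical isomorphism $M\otimes_{\C[S_\ell]}F\xrightarrow{\sim}M$ sending $m\otimes\mathbf v\mapsto m$ then identifies $m\mapsto m\otimes\mathbf v$ with the inclusion of the direct summand $M\hookrightarrow M\otimes_{\C[S_\ell]}V^{\otimes\ell}$, which is injective.

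The main obstacle will be the careful bookkeeping of the error terms $\mathbf w_m$ in the first claim. The trick is to perform the position-by-position substitution strictly left-to-right, so that each error tensor preserves positions $k,\dots,\ell$ and consequently reduces to the previous case of the inductive hypothesis; once this is arranged, the rest of the argument, including the injectivity statement, is routine. The hypothesis $\ell\le n$ enters only implicitly, ensuring that a tuple of $\ell$ distinct indices in $[1,n+1]$ is available.
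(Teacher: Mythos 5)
Your proof is correct. For the generation claim $V^{\otimes\ell}=U(\sln)\mathbf v$ the paper's own argument is in the same spirit — replace the tensor factors one slot at a time by applying matrix units $x_{j,i}$, using the distinctness of $i_1,\ldots,i_\ell$ to control which positions contribute to the Leibniz expansion — but it processes positions from right to left and is much terser, never explicitly naming the error terms that arise at stage $r$ from already-processed slots that happen to carry $v_{i_r}$. Your left-to-right version with the explicit sum $\sum_{m<k,\; j_m=i_k}\mathbf w_m$ and the observation that positions $m>k$ cannot contribute (since $i_m\ne i_k$) is precisely the bookkeeping the paper leaves to the reader; the two inductions are mirror images of one another. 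For the injectivity claim you take a genuinely different and cleaner route. The paper deduces injectivity from the generation statement, writing that $m\otimes\mathbf v=0$ would force $m\otimes V^{\otimes\ell}=0$, and then implicitly invokes the fact that this forces $m=0$ (which itself needs either classical Schur--Weyl or exactly the free-orbit observation you make). You bypass the generation statement entirely: the trivial stabilizer of $\mathbf v$ makes $F=\C[\Sl]\mathbf v$ a free rank-one submodule, semisimplicity of $\C[\Sl]$ gives a splitting $V^{\otimes\ell}=F\oplus U$, and $M\otimes_{\C[\Sl]}F\cong M$ is then a direct summand of $M\otimes_{\C[\Sl]}V^{\otimes\ell}$. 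This is self-contained, more elementary, and arguably closes a small gap in the paper's one-line justification. One small remark: as you note, the only hypothesis that matters here is the existence of $\ell$ distinct indices in $[1,n+1]$, i.e.\ $\ell\le n+1$; the standing assumption $\ell\le n$ is not actually needed for this lemma.
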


\begin{proof} For $j_\ell\ne i_\ell$ 
we have $$x_{j_\ell, i_\ell}\mathbf{v}= v_{i_1}\otimes \cdots \otimes v_{i_{\ell-1}}\otimes v_{j_\ell}.$$ It follows that $$v_{i_1}\otimes \cdots\otimes v_{i_{\ell}-1}\otimes V\subset U(\mathfrak{sl}_{n+1})\mathbf{v}.$$
Repeating with $x_{j_{r}, i_{r}}$ for $1\le r\le \ell-1$ ,   the  first assertion of the lemma follows. The injectivity is now immediate since  $m\otimes \mathbf{v}=0$ would imply that $m\otimes V^{\otimes \ell}=0$. 
\end{proof}

Retain the notation established so far. Let $M$ be a right finite dimensional $\asl$--module such that $\text{Res}^{\sln[s^{\pm 1}, t^{\pm 1}]}_{\asln} W$ is of the form $M \otimes_{\C[\Sl]} V^{\otimes \ell}, \ell \leq n$. For $i \in \{ 0 , \ldots , n \}$, set
	$$\hat{a}_{i} = \begin{cases}
		v_1 \otimes \cdots \otimes \hat{v}_i \otimes \hat{v}_{i+1} \otimes \cdots \otimes v_{\ell+1}, \text{ if } 1 \le i\le \ell;\\
		v_1 \otimes \cdots \otimes v_{\ell-1}, \text{ if } \ell+1\le i \le n;\\
		v_2 \otimes \cdots \otimes v_{\ell}, \text{ if } i=0,\\
	\end{cases}$$
	where $\hat{v}_i$ means that $v_i$ is missing in the tensor product (for $\ell=2$, $\hat{a}_{i} = v_3$ if $1 \leq i \leq \ell$). Here, the indices are considered cyclical, i.e., $v_0$ is defined to be $v_{n+1}$. Set
	$$u^-_{i,1} = v_{i+1} \otimes \hat{a}_{i}, \hspace{5mm} u^+_{i,1} = v_{i} \otimes \hat{a}_{i}.$$
	(If $\ell=1$, set $u^-_{i,1} = v_{i+1}, u^+_{i,1} = v_{i}.$) For $j\in \{1,2,\dots,\ell\}$ define 
	$$
	u^{\pm}_{i,j}=\sigma(1,j)u^{\pm}_{i,1}
	$$
	where $\sigma(1,j)$ is the element of $\asl$ corresponding to the transposition $(1,j)$, i.e.,
	$$\sigma=\sigma_1\sigma_{2}\cdots \sigma_{j-2}\sigma_{j-1}\sigma_{j-2}\cdots \sigma_2\sigma_1.$$
 Using the above notations, we can prove the following.
 
\begin{lemma}\label{Fprop72}
	For each $i\in [0, n] , k \in \Z$, there exist $\alpha_{j,x_i^\pm(k)},\alpha_{j,h_i(k)} \in \text{End}_{\cc}(M)$ with
	\begin{align*}
		x_i^\pm(k).\left(m \otimes_{\C[\Sl]} u^{\mp}_{i,j} \right) &= \alpha_{j,x_i^\pm(k)}(m) \otimes_{\C[\Sl]} u^{\pm}_{i,j}\ \\
		h_i(k).\left(m \otimes_{\C[\Sl]} u^{\pm}_{i,j} \right) &= \pm\alpha_{j,h_i(k)}(m) \otimes_{\C[\Sl]} u^{\pm}_{i,j}\ 
	\end{align*}
	and moreover,
	\begin{align*}
		\big(x_i^\pm(k)\big)_j.u^{\mp}_{i,j} &= u^{\pm}_{i,j};\\
		\big(h_i(k)\big)_j .u^{\pm}_{i,j} &= \pm u^{\pm}_{i,j}.
	\end{align*}
\end{lemma}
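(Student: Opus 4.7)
The plan is to split the lemma into two parts: the ``moreover'' identities, which are direct tensor-factor computations inside $\vl$, and the main identities, which follow from a weight-space argument combined with the injectivity recorded in Lemma \ref{injectivelemma}.

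For the ``moreover'' identities, I would work purely inside $\vl$. The explicit formulas for the $\sln$- (resp. $\asln$-) action on $V$ show that, for $i\in[1,n]$, the operators $x_i^\pm$ and $h_i$ annihilate every standard basis vector $v_r$ with $r\notin\{i,i+1\}$; and for $i=0$, that $x_0^\pm$ and $h_0$ annihilate every $v_r$ with $r\notin\{1,n+1\}$ (using the cyclic convention $v_0=v_{n+1}$). The tensor factor $\hat a_i$ was designed precisely to consist entirely of such ``annihilated'' vectors. Consequently the sum over slots of $(x_i^\pm(k))_p\,u^{\mp}_{i,j}$ collapses to its slot-$j$ contribution, which is read off directly from $x_i^+v_{i+1}=v_i$, $x_i^-v_i=v_{i+1}$, $h_iv_i=v_i$, $h_iv_{i+1}=-v_{i+1}$ (and their affine analogues for $i=0$).

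For the main identities involving $x_i^{\pm}(k)$, the key observation is that the commutation relation \eqref{tor1} forces $x_i^{\pm}(k)$ to have $\mathfrak{h}$-weight $\pm\alpha_i$ (or $\mp\theta$ when $i=0$). Hence $x_i^{\pm}(k).(m\otimes u^{\mp}_{i,j})$ lies in the weight subspace $W_\mu$ of $W$, where $\mu$ is the weight of $u^{\pm}_{i,j}$. The hypothesis $\ell\le n$ ensures that the $\ell$ indices occurring in $u^{\pm}_{i,j}$ are pairwise distinct, so $(\vl)_\mu$ is precisely the linear span of the $\Sl$-orbit of $u^{\pm}_{i,j}$, with trivial stabilizer; equivalently, $(\vl)_\mu$ is a free rank-one left $\C[\Sl]$-module with basis $u^{\pm}_{i,j}$. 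Therefore
\[
W_\mu \;\cong\; M\otimes_{\C[\Sl]}(\vl)_\mu \;\cong\; M,
\]
and every element of $W_\mu$ admits a unique presentation as $\alpha\otimes u^{\pm}_{i,j}$ with $\alpha\in M$; uniqueness is exactly the injectivity asserted in Lemma \ref{injectivelemma}. Declaring $\alpha_{j,x_i^{\pm}(k)}(m)$ to be this unique $\alpha$ defines a $\C$-linear endomorphism of $M$, linearity being inherited from the $\C$-linearity of $x_i^{\pm}(k)$ acting on $W$.

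For the $h_i(k)$ identities, I would produce a single $\alpha_{j,h_i(k)}$ valid for both sign cases by invoking the toroidal relation $h_i(k)=[x_i^+(k),x_i^-(0)]$ from \eqref{tor2}. Because $v_{i+1}$ does not appear in $u^+_{i,j}$, a weight argument forces $x_i^+(k)(m\otimes u^+_{i,j})=0$, so
\[
h_i(k)(m\otimes u^+_{i,j}) \;=\; x_i^+(k)\,x_i^-(0)(m\otimes u^+_{i,j}) \;=\; x_i^+(k)(m\otimes u^-_{i,j}) \;=\; \alpha_{j,x_i^+(k)}(m)\otimes u^+_{i,j},
\]
where the middle equality uses the explicit $\sln$-action on $\vl$ supplied by Theorem \ref{nonquantumuntwistedequiv} at $k=0$. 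Dually, since $u^-_{i,j}$ contains no $v_i$, one has $x_i^-(0)(m\otimes u^-_{i,j})=0$, yielding $h_i(k)(m\otimes u^-_{i,j}) = -\alpha_{j,x_i^+(k)}(m)\otimes u^-_{i,j}$. Setting $\alpha_{j,h_i(k)}:=\alpha_{j,x_i^+(k)}$ supplies both identities with their prescribed signs. The main obstacle I anticipate is bookkeeping: verifying the weight arguments uniformly over all $i\in[0,n]$, and in particular handling the affine-type root $\alpha_0=-\theta$ together with the cyclic index convention $v_0=v_{n+1}$ so that the $i=0$ case parallels the $i\in[1,n]$ cases without sign or indexing mishaps.
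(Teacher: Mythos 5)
Your proposal is correct and follows the paper's overall strategy: identify the relevant weight spaces of $\vl$, use the identification $W_\mu \cong M\otimes_{\C[\Sl]}(\vl)_\mu$, and invoke the injectivity from Lemma~\ref{injectivelemma} to recover the endomorphisms $\alpha_{j,z}$. The one genuine variation is your treatment of $h_i(k)$: rather than repeating the weight-space argument for $h_i(k)$ (as the paper does, using $[h_j(0),h_i(k)]=0$ to show $h_i(k)$ preserves the weight space), you derive the $h_i(k)$ case from the already-established $x_i^{\pm}(k)$ case via the toroidal relation $h_i(k)=[x_i^+(k),x_i^-(0)]$, combined with the observation that $x_i^+(k)(m\otimes u^{+}_{i,j})=0$ because $\lambda_{u^{+}_{i,1}}+\alpha_i$ is not a weight of $\vl$ (this needs a brief coordinate check, but it is true for all $i\in[0,n]$ given $\ell\le n$). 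This shortcut is clean and economical, and has the side benefit of producing the identity $\alpha_{j,h_i(k)}=\alpha_{j,x_i^+(k)}$ immediately, a fact the paper only establishes later in Lemma~\ref{alphaequality}. Two small points you should spell out: the middle step $x_i^-(0)(m\otimes u^{+}_{i,j})=m\otimes u^{-}_{i,j}$ uses the $k=0$ action from Theorem~\ref{nonquantumuntwistedequiv}, where $x_i^\pm(0)$ (for $i\ne 0$) and $x_0^\pm(0)$ act with no $\mathbf{x}_p$ factors, so the sum over slots collapses to slot $j$ by your ``moreover'' computation; and the isomorphism $W_\mu\cong M\otimes_{\C[\Sl]}(\vl)_\mu$ rests on $\vl=\bigoplus_\mu(\vl)_\mu$ being a decomposition of $\C[\Sl]$-modules, since $\Sl$ permutes tensor factors and hence preserves weights. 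Neither gap is serious; the argument is sound.
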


\begin{proof}
	For a fixed $i$, the set $\{ \tau.u^{\pm}_{i,1} \mid \tau \in \Sl \}$ spans the $\sln$-weight space $V^{\otimes \ell}_{\lambda_{u^{\pm}_{i,1}}}$ of weight $\lambda_{u^{\pm}_{i,1}}$ of $V^{\otimes \ell}$ where
	\begin{align*}
		\lambda_{u^{+}_{i,1}}&=\begin{cases}
			\sum_{p=1}^{\ell+1} \varepsilon_p-\varepsilon_{i+1},\text{ if } 1\le i\le \ell;\\
			\sum_{p=1}^{\ell-1} \varepsilon_p+\varepsilon_{i},\text{ if } \ell+1 \le i\le n;\\
			\sum_{p=2}^{\ell} \varepsilon_p+\varepsilon_{n+1},\text{ if } i=0
		\end{cases}
	\end{align*}
	and $\lambda_{u^{-}_{i,1}}=\lambda_{u^{+}_{i,1}}-\alpha_i$. Indeed, using the action from \S \ref{classicalaffineswd}, for any $h_i (0), i = 1, \ldots , n$, we have
	$$h_i (0) . v_j = 
	(\delta_{j,i} - \delta_{j,i+1}) v_j = \varepsilon_j (h_i) v_j.$$
	Thus $v_j$ is of weight $\varepsilon_j$ and hence $h_j (k).\tau.u^{\pm}_{i,1} = \lambda_{u^{\pm}_{i,1}} (h_j) \tau.u^{\pm}_{i,1}$ and so $\tau. u^{\pm}_{i,1}$ is of $\sln$-weight $\lambda_{u^{\pm}_{i,1}}$.
	
	Now, a direct computation shows that $h_i (0).\tau.u^{+}_{i,1} = \lambda_{u^{\pm}_{i,1}}(h_i (0))\tau.u^{+}_{i,1}=\tau.u^{+}_{i,1}$. Similarly, we see that $h_i (0).\tau.u^{-}_{i,1}= -\tau.u^{-}_{i,1}$. It follows from these calculations and Theorem \ref{nonquantumuntwistedequiv} that for every $m \in M$, we have $h_i (0).\left(m \otimes_{\C[\Sl]} u^{\pm}_{i,1} \right) = \pm m \otimes_{\C[\Sl]} u^{\pm}_{i,1}$, so $m \otimes_{\C[\Sl]} u^{\pm}_{i,1}$ is in the weight space $W_{\lambda_{u^{\pm}_{i,1}}}$.
	
	The relations in $\cta$ show that $h_i (k)$ must take $m\otimes_{\C[\Sl]} u^{\pm}_{i,1}$ to a vector of weight $\lambda_{u^{\pm}_{i,1}}$ since if $w$ has weight $\lambda_{u^{\pm}_{i,1}}$ we must have, for $1\le j \le n$,  $h_j(0)h_i(k).w=h_i(k)h_j(0).w=\lambda_{u^{\pm}_{i,1}}(h_j)h_i(k).w$. And since $\{ \tau.u^{\pm}_{i,1} \mid \tau \in \Sl \}$ spans the subspace of $V^{\otimes \ell}$ of its associated weight, then $h_i (k)$ must take $m \otimes_{\C[\Sl]} u^{\pm}_{i,1}$ to a sum of terms of the form $m_\tau\otimes \tau.u^{\pm}_{i,1}$ for some $m_\tau$ (well-defined since $W$ is an $\sln[s^{\pm 1}, t^{\pm 1}]$--module). Now, for each $m \in M$ we have:
	\begin{align*}
		h_i (k).\left(m \otimes_{\C[\Sl]} u^{\pm}_{i,1} \right) &= \sum_{\tau \in \Sl} m_{\tau} \otimes_{\C[\Sl]} \lambda_{u^{\pm}_{i,1}}(h_i) \tau.u^{\pm}_{i,1} \\
		& = \sum_{\tau \in \Sl} m_{\tau}.\tau \otimes_{\C[\Sl]} \lambda_{u^{\pm}_{i,1}}(h_i) u^{\pm}_{i,1}\\
		& = \pm m' \otimes_{\C[\Sl]} u^{\pm}_{i,1}
	\end{align*}
	where $m' = \sum_{\tau \in \Sl} m_{\tau}.\tau$. Note that this $u^{\pm}_{i,1}$ meets the conditions of Lemma \ref{injectivelemma}. Therefore each such $m'$ can be recovered from $m' \otimes_{\C[\Sl]} u^{\pm}_{i,1} $. Then there exists $\alpha_{1,h_i(k)} \in \text{End}_{\cc}(M)$ with the property that, for each $m \in M$, we can find $m'$ such that $m' = \alpha_{1,h_i(k)} (m)$.
	Defining $\alpha_{j,h_i(k)}(m)=\alpha_{1,h_i(k)}(m.\sigma).\sigma^{-1}$ we see that 
	\begin{align*}
		h_i (k). \left(m \otimes_{\C[\Sl]} u^{\pm}_{i,j} \right)&=h_i (k). \left(m. \sigma \otimes_{\C[\Sl]} \sigma^{-1}. u^{\pm}_{i,j} \right)\\
		&=h_i (k). \left(m.\sigma \otimes_{\C[\Sl]} u^{\pm}_{i,1} \right)\\
		&=\pm(\alpha_{1,h_i(k)}(m.\sigma) \otimes_{\C[\Sl]} u^{\pm}_{i,1})\\
		&=\pm(\alpha_{1,h_i(k)}(m.\sigma).\sigma^{-1} \otimes_{\C[\Sl]} \sigma.u^{\pm}_{i,1})\\
		&=\pm(\alpha_{j,h_i(k)}(m) \otimes_{\C[\Sl]} u^{\pm}_{i,j}).
	\end{align*}
	
	Now assume $w$ has weight $\lambda$. By relations in $\cta$, for any $w \in W, 1\le i \le n$, we have 
	\begin{align*}
		h_i (0) x_j^{\pm} (k') . w &= (x_j^{\pm} (k') h_i (0) \pm \alpha_j (h_i(0)) x_j^{\pm} (k')).w\\
		& = (\lambda \pm \alpha_j) (h_i(0)) x_j^{\pm} (k').w.
	\end{align*}
	Thus $x_j^{\pm} (k')$ adds $\pm \alpha_j = \pm (\varepsilon_j - \varepsilon_{j+1})$ to the weight if $j = 1, \ldots , n$, or adds $\alpha_0 = \mp (\varepsilon_1 - \varepsilon_{n+1})$ to the weight if $j = 0$. In particular, by direct calculation, the weight of $x_j^{\pm} (k').u^{\mp}_{j,1}$ is the same as the weight of $u^{\pm}_{j,1}$. We conclude that for every $m \in M$, $x_j^{\pm} (k').\left(m \otimes_{\C[\Sl]} u^{\mp}_{j,1} \right) = \sum_{\tau \in \Sl} m_{\tau} \otimes_{\C[\Sl]} \tau.u^{\pm}_{j,1}$ for some $m_{\tau} \in M$. An argument similar to the one above proves the existence of the $\alpha_{j,x_i^\pm(k)}$ with the properties in the Lemma statement.

\end{proof}

\begin{lemma}\label{Fprop73}
For all $z \in \{ h_i (k), x_i^{\pm} (k) \mid i = 0, \ldots , n \text{ and } k \in \Z \}$, $m \in M$ (with $M$ as in Lemma \ref{Fprop72}), and $\mathbf{v} \in V^{\otimes \ell}$, we have 
$$z.(m \otimes_{\C[\Sl]} \mathbf{v}) = \sum_{1 \leq p \leq \ell} \alpha_{p,z}(m) \otimes_{\C[\Sl]} z_p. \mathbf{v}.$$
\end{lemma}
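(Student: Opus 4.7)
The plan is to verify the formula on a spanning subset of $V^{\otimes \ell}$ and propagate it via the $\mathfrak{sl}_{n+1}$-module structure on $W$. Both sides are $\mathbb{C}$-linear in $\mathbf{v}$, so it suffices to check on a spanning set. By Lemma \ref{injectivelemma}, $V^{\otimes \ell} = U(\mathfrak{sl}_{n+1}) \cdot \mathbf{v}_0$ for any of the base vectors $\mathbf{v}_0 \in \{u^\mp_{i,j}, u^\pm_{i,j}\}$ used in Lemma \ref{Fprop72}, so we may reduce to $\mathbf{v} = Y \cdot \mathbf{v}_0$ with $Y \in U(\mathfrak{sl}_{n+1})$. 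I induct on the length of $Y$ as a monomial in the Chevalley generators of $\mathfrak{sl}_{n+1}$.

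The base case $Y = 1$ follows from Lemma \ref{Fprop72} together with the observation that $(z)_p \cdot \mathbf{v}_0 = 0$ for $p \neq j$: the distinguished weight vector $v_i$ or $v_{i+1}$ upon which $z$ acts nontrivially appears only at tensor position $j$ of $\mathbf{v}_0$ by construction, so the sum on the right-hand side of the claim collapses to the single term at $p = j$, which matches the output of Lemma \ref{Fprop72}.

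For the inductive step, write $\mathbf{v} = y \mathbf{v}'$ with $y$ a Chevalley generator of $\mathfrak{sl}_{n+1}$ and $\mathbf{v}'$ of strictly smaller length. Since $y \in \mathfrak{sl}_{n+1}$ acts diagonally on the $V^{\otimes \ell}$ factor of $W$ by Theorem \ref{nonquantumuntwistedequiv}, we have $m \otimes y \mathbf{v}' = y \cdot (m \otimes \mathbf{v}')$, and the Lie bracket identity in $\mathfrak{sl}_{n+1}[s^{\pm 1}, t^{\pm 1}]$ yields
\begin{equation*}
z \cdot (m \otimes \mathbf{v}) \;=\; y \cdot z \cdot (m \otimes \mathbf{v}') + [z, y] \cdot (m \otimes \mathbf{v}').
\end{equation*}
The inductive hypothesis rewrites $z \cdot (m \otimes \mathbf{v}') = \sum_p \alpha_{p,z}(m) \otimes z_p \mathbf{v}'$, and the Leibniz rule $y(z_p \mathbf{v}') = z_p(y \mathbf{v}') + [y, z_{\mathfrak{sl}}]_p \mathbf{v}'$ (where $z_{\mathfrak{sl}}$ denotes the underlying $\mathfrak{sl}_{n+1}$-element of $z$) converts the first summand into the target expression $\sum_p \alpha_{p,z}(m) \otimes z_p \mathbf{v}$ plus a residual $\sum_p \alpha_{p,z}(m) \otimes [y, z_{\mathfrak{sl}}]_p \mathbf{v}'$.

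The main obstacle is cancelling this residual against $[z, y] \cdot (m \otimes \mathbf{v}')$, since $[z, y] = [z_{\mathfrak{sl}}, y] \otimes s^k t^\epsilon$ generally involves an $\mathfrak{sl}_{n+1}$-factor that is not a Chevalley generator and hence is not directly covered by the lemma. To handle this I would run a simultaneous induction that enlarges the class of admissible $z$ to include every element $x \otimes s^k t^\epsilon$ reachable from the Chevalley generators of $\mathfrak{sl}_{n+1}[s^{\pm 1}, t^{\pm 1}]$ by iterated bracketing with $\mathfrak{sl}_{n+1}$, prescribing $\alpha_{p, [z, y]} := \alpha_{p, z}$. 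Consistency of this extension is ensured by Lemma \ref{injectivelemma}: for each such $z'$, the action on $m \otimes u^\mp_{i, p}$ unambiguously recovers $\alpha_{p, z'}(m)$ via the injectivity of $m \mapsto m \otimes u^\mp_{i, p}$. The extended formula forces the cancellation of the residual, closing both the enlarged and the original induction.
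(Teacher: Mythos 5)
Your approach diverges fundamentally from the paper's, and the divergence creates a gap. The paper fixes $z = x_i^+(k)$ (with $i \ge 2$), inducts on the multiplicities of $v_i$ and $v_{i+1}$ among the tensor factors of $\mathbf{v}$, and in the inductive step peels off a factor $x_{i-1}^-(0)$ chosen specifically so that $[x_i^+(k), x_{i-1}^-(0)] = 0$ in $\sln[s^{\pm 1}, t^{\pm 1}]$ (since $\alpha_i - \alpha_{i-1}$ is not a root). Because the peeled-off element commutes with $z$, the residual term you describe simply never arises: one pushes $x_{i-1}^-(0)$ through the sum using its constant-loop (Theorem~\ref{nonquantumuntwistedequiv}) action and is done.

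Your version peels off an arbitrary Chevalley generator $y$, so $[z,y]$ is generically nonzero, and you are forced into the residual cancellation. The proposed fix --- enlarging the class of admissible $z$ to all bracket-iterates $z' = x \otimes s^k t^\epsilon$ and prescribing $\alpha_{p,[z,y]} := \alpha_{p,z}$ --- is where the argument breaks. The double induction is not well-founded: to establish the formula for $z$ at tensor length $L$ you need the formula for $z' = [z,y]$ at length $L-1$; to establish the base case ($L=0$, i.e.\ the analogue of Lemma~\ref{Fprop72}) for a non-Chevalley $z'$ you would compute $z'.(m \otimes \mathbf{v}_0) = z.(m \otimes y\mathbf{v}_0) - y.z.(m \otimes \mathbf{v}_0)$, and the first term involves $z$ at length $1$, which in turn requires the length-$0$ case for \emph{all} admissible $z$, including $z'$ itself. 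The prescription $\alpha_{p,[z,y]} := \alpha_{p,z}$ only names the candidate operator; it does not prove that $z'.(m \otimes u) = \alpha_{p,z}(m) \otimes z'_p u$ on the base vectors, and Lemma~\ref{injectivelemma} gives injectivity, not surjectivity onto the right form. (Moreover the vectors $u^{\mp}_{i,p}$ are tailored to simple root vectors; a root vector $x_{\varepsilon_a - \varepsilon_b}$ for a non-simple root would require new test vectors and a new weight argument extending the proof of Lemma~\ref{Fprop72}.) Unless you replicate the paper's trick of peeling off a commuting factor, or independently extend Lemma~\ref{Fprop72} to arbitrary root vectors, the induction cannot close.
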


\begin{proof}
Let $z = x_i^{+} (k), i = 2, \ldots , n$ (in particular, $n \geq 2$; note that, if $n=1$, then $\ell=1$, in which case Lemma \ref{Fprop73} reduces to Lemma \ref{Fprop72}). Other cases are similar and are omitted.

Let $r$ (resp. $s$) be the number of tensor factors in $\mathbf{v}$ containing $v_i$ (resp. $v_{i+1}$). Given $\mathbf{v}$, write $\bar{r}(\mathbf{v})$ for the number $r$ in $\mathbf{v}$, and similarly for $\bar{s}$, giving $s$. We will prove the statement of the lemma for all $\mathbf{v} \in V^{\otimes \ell}$ via induction. First induct on $r$.

Base case: if $r=s=0$, then the sum $\sum_{1 \leq p \leq \ell} \alpha_{p,x_i^{+} (k)}(m) \otimes_{\C[\Sl]} \big(x_i^{+} (k) \big)_p . \mathbf{v} = 0$ since $x_i^{+} (k).\mathbf{v}=0$. Now, $x_i^{\pm} (k).(m \otimes_{\C[\Sl]} \mathbf{v}) = \frac{1}{2} \left( h_i(k) x_i^{\pm}(0) - x_i^{\pm}(0) h_i(k) \right).(m \otimes_{\C[\Sl]} \mathbf{v})$. By Theorem \ref{nonquantumuntwistedequiv}, $x_i^{\pm}(0) .(m \otimes_{\C[\Sl]} \mathbf{v}) =  m \otimes_{\C[\Sl]} x_i^{\pm}(0).\mathbf{v} = 0$, and $h_i(k)$ preserves the weight space of $m \otimes_{\C[\Sl]} \mathbf{v}$, so $x_i^{\pm}(0)$ applied to $h_i (k) (m \otimes_{\C[\Sl]} \mathbf{v})$ is 0. Hence both terms are 0, proving the Lemma when $r=s=0$.

Inductive step: assume that $\mathbf{v}$ is such that $\bar{r}(\mathbf{v})=r+1$ and $\bar{s}(\mathbf{v})=s$. Then there exists $k_0 \in \{ 1 , \ldots , n \}$ such that:
\begin{enumerate}
    \item $v_{k_0}$ does not appear in $\mathbf{v}$ (since $\ell \leq n$);
    \item we may assume that $k_0 = i-1$ ($n \geq 2$). Otherwise there exists $\rho \in W(\sln)$, the Weyl group of $\sln$, such that $\rho(k_0) = i-1$ and $\rho(i)=i, \rho(i+1)=i+1$. Then $\bar{r}(\rho(\mathbf{v}))=r+1, \bar{s}(\rho(\mathbf{v}))=s$ and $\mathbf{v}$ has no copies of $v_{i-1}$.
\end{enumerate}
If $\ell \geq 2$, take $p$ such that $i_p=i$ (which is possible if $\ell \geq 2$, since $r+1>0$) and define $\mathbf{w}$ such that $w_{i_q}=v_{i_q}$ for $q \neq p$ and $w_{i_p}=v_{i-1}$. Then $\mathbf{v} = x^{-}_{i-1} (0) \mathbf{w}$ and $\bar{r}(\mathbf{w})=r, \bar{s}(\mathbf{w})=s$, so we can apply the induction hypothesis to $\mathbf{w}$. So
\begin{align*}
    x_i^{+} (k). (m \otimes_{\C[\Sl]} \mathbf{v}) &= x_i^{+} (k) x_{i-1}^{-} (0). (m \otimes_{\C[\Sl]} \mathbf{w}) = x_{i-1}^{-} (0) x_i^{+} (k) .(m \otimes_{\C[\Sl]} \mathbf{w}) \\
    &= x_{i-1}^{-} (0) \sum_{1 \leq p \leq \ell} \alpha_{p, x_i^{+} (k)}(m) \otimes_{\C[\Sl]} \big(x_i^{+} (k) \big)_p . \mathbf{w} \\
    &= \sum_{1 \leq p \leq \ell} \alpha_{p, x_i^{+} (k)}(m) \otimes_{\C[\Sl]} \big(x_i^{+} (k) \big)_p . \mathbf{v},
\end{align*}
which proves the statement of the lemma for all $r$ by induction. (If $\ell=1$, take $\mathbf{v}=v_i$ and $\mathbf{w}=v_{i-1}$ and perform the same computation above.) A similar argument establishes the statement for all $s$, completing the proof of the lemma.

\end{proof}

\begin{lemma}\label{alphaequality}
	For each $i,j \in [1,n], p \in [1,\ell]$, and $k \in \Z$, we have 
	$$
		\alpha_{p,z_i(k)}=\alpha_{p,z'_{j}(k)}=\left(\alpha_{p,z_i(1)}\right)^k
	$$ 
	where $z,z'$ is $x^{\pm}$ or $h$.
\end{lemma}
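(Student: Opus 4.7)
The plan is to combine the ``base case'' $\alpha_{p, z(0)} = \mathrm{id}$ (for $z \in \{x_i^\pm, h_i\}$ with $i \in [1,n]$), provided by the affine Schur-Weyl duality (Theorem \ref{nonquantumuntwistedequiv}) applied to the restriction of the $\cta$-action on $W$ to $\sln \otimes \C[t^{\pm 1}] \subset \cta$, with the defining relations \eqref{tor1}--\eqref{tor3} applied to the test vectors $m \otimes u^{\pm}_{i,p}$ of Lemma \ref{Fprop72}, using Lemma \ref{injectivelemma} to strip off the tensor factor.

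For the base case, recall that by the choice of $M$ the restriction $\mathrm{Res}^{\cta}_{\asln} W$ is isomorphic to $M \otimes_{\C[\Sl]} V^{\otimes\ell}$ with the $\asln$-module structure described in Theorem \ref{nonquantumuntwistedequiv}. Since only $x_0^\pm$ and $h_0$ involve the $t$-loop in that construction, the generators $x_i^\pm(0)$ and $h_i(0)$ for $i \in [1,n]$ act as the plain $\sln$-action on $V^{\otimes\ell}$ (leaving the $M$-factor untouched), and comparison with Lemma \ref{Fprop73} yields $\alpha_{p, x_i^\pm(0)} = \alpha_{p, h_i(0)} = \mathrm{id}_M$ for all $p$. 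Next, applying $h_i(k) = [x_i^+(k), x_i^-(0)]$ to $m \otimes u^+_{i,p}$ and using this base case gives $\alpha_{p, h_i(k)} = \alpha_{p, x_i^+(k)}$; applying $h_i(k) = [x_i^+(0), x_i^-(k)]$ to $m \otimes u^-_{i,p}$ gives $\alpha_{p, h_i(k)} = \alpha_{p, x_i^-(k)}$. This handles $z \ne z'$ for fixed $i$.

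For the $i$-independence, apply $[h_i(k), x_{i+1}^+(0)] = -x_{i+1}^+(k)$ (from \eqref{tor1}, since $a_{i,i+1} = -1$) to $m \otimes u^-_{i+1, p}$. In the explicit computation via Lemmas \ref{Fprop72} and \ref{Fprop73}, the $h_i$-weight contribution at the position of $v_i$ in $u^\pm_{i+1, p}$ (which is the same position in both $u^+$ and $u^-$, being part of the common tail $\hat a_{i+1}$) cancels between the two halves of the commutator, leaving $\alpha_{p, h_i(k)} = \alpha_{p, x_{i+1}^+(k)}$; by the previous step this equals $\alpha_{p, h_{i+1}(k)}$. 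Iterating along the Dynkin diagram yields $\alpha_{p, h_i(k)} = \alpha_{p, h_j(k)}$ for all $i, j \in [1, n]$. Finally, for the power relation: $[h_i(k), h_i(m)] = 0$ applied to $m \otimes u^+_{i,p}$ gives commutativity $[\alpha_{p, h_i(k)}, \alpha_{p, h_i(m)}] = 0$; $[h_i(k), x_i^+(m)] = 2 x_i^+(k+m)$ applied to $m \otimes u^-_{i,p}$ gives the anticommutator identity $\{\alpha_{p, h_i(k)}, \alpha_{p, x_i^+(m)}\} = 2 \alpha_{p, x_i^+(k+m)}$, which combined with commutativity and the equality $\alpha_{p, x_i^+(m)} = \alpha_{p, h_i(m)}$ reduces to $\alpha_{p, h_i(k+m)} = \alpha_{p, h_i(k)} \alpha_{p, h_i(m)}$; iteration gives $\alpha_{p, h_i(k)} = (\alpha_{p, h_i(1)})^k$ for $k \ge 0$, with negative $k$ handled by setting $k=1, m=-1$ to obtain $\alpha_{p, h_i(-1)} = \alpha_{p, h_i(1)}^{-1}$. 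The main technical hurdle is the $i$-independence step, which requires careful bookkeeping of the positions of $v_i$ relative to $v_{i+1}, v_{i+2}$ across $u^+_{i+1,p}$ and $u^-_{i+1,p}$; once the base case $\alpha_{p, x_i^\pm(0)} = \mathrm{id}$ is secured, the cancellation works out cleanly.
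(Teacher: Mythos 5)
Your proof is correct and follows the same overall strategy as the paper --- establishing the equalities on the test vectors $u^{\pm}_{i,p}$ via commutator relations and then invoking Lemma~\ref{injectivelemma} --- but the specific commutator identities you choose differ, and in each case your choices are tighter. In the $i$-independence step the paper applies $x_{i+1}^-(0)x_{i+1}^+(0)h_i(k)$ to $\mathbf v = \sigma(1,p)(v_i\otimes v_{i+2}\otimes a_i)$ (a vector with $v_i$ at position $p$); your single commutator $[h_i(k),x_{i+1}^+(0)]=-x_{i+1}^+(k)$ applied to $u^-_{i+1,p}$ produces the same conclusion, and the ``spectator'' contribution $\alpha_{q_0,h_i(k)}$ coming from the position of $v_i$ in $\hat a_{i+1}$ appears once with a $+$ and once with a $-$ and cancels without any hypothesis that the $\alpha_{q}$'s commute across positions. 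In the power step the paper uses $[x_2^+(k),h_1(1)]$ on $u^-_{2,p}$, which involves two different root indices and so the position of $v_1$ in $\hat a_2$ enters the computation as an extra $\alpha_{q,h_1(1)}$ that must be shown to commute with $\alpha_{p,x_2^+(k)}$; your choice $[h_i(k),x_i^+(m)]=2x_i^+(k+m)$ on $u^-_{i,p}$ stays at a single index, keeps everything at position $p$, and yields the anticommutator identity $\{\alpha_{p,h_i(k)},\alpha_{p,x_i^+(m)}\}=2\alpha_{p,x_i^+(k+m)}$ cleanly, after which the separately-derived commutativity $[\alpha_{p,h_i(k)},\alpha_{p,h_i(m)}]=0$ closes the product formula. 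You also make explicit the base case $\alpha_{p,z(0)}=\mathrm{id}_M$, which the paper uses implicitly. The only logical reordering is that you prove the $z\leftrightarrow z'$ equalities (for fixed $i$) before $i$-independence, because your $i$-independence step quotes $\alpha_{p,x_{i+1}^+(k)}=\alpha_{p,h_{i+1}(k)}$; this is fine because that equality holds for every fixed index.
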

\begin{proof}
	We first show that $\alpha_{p,h_i(k)}=\alpha_{p,h_j(k)}$. This is trivial if $n=1$, so assume $n \geq 2$. For our first case, assume that $\ell \geq 2$ and without loss of generality that $1\le i<j\le n$. Let 
	$$\mathbf v=\sigma(1,p)(v_{i}\otimes v_{i+2}\otimes a_i)$$ 
	where $a_i\in V^{\otimes (\ell-2)}$ is a pure tensor having indices all distinct and not among $\{i,i+1,i+2\}$.
	
	We have:
	\begin{equation}
	h_i (k).(m \otimes_{\C[\Sl]} \mathbf{v})=\alpha_{p,h_i(k)}(m)\otimes_{\C[\Sl]} \mathbf{v}.\label{eq:cartanaction}
	\end{equation}
	Now act on the right side of (\ref{eq:cartanaction}) by $x_{i+1}^-(0)x_{i+1}^+(0)$ to get:
	\begin{align*}
	x_{i+1}^-(0)x_{i+1}^+(0).(\alpha_{p,h_i(k)}(m)\otimes_{\C[\Sl]} \mathbf{v})=\alpha_{p,h_i(k)}(m) \otimes_{\C[\Sl]} \mathbf{v}.
	\end{align*}
	Acting on the left side of (\ref{eq:cartanaction}) by $x_{i+1}^-(0)x_{i+1}^+(0)$ gives
	\begin{align*}
		x_{i+1}^-(0)x_{i+1}^+(0)h_i (k).(m\otimes_{\C[\Sl]} \mathbf{v})&=x_{i+1}^-(0)(x_{i+1}^+(k) + h_i (k)x_{i+1}^+(0)).(m\otimes_{\C[\Sl]} \mathbf{v})\\
		&=x_{i+1}^-(0)x_{i+1}^+(k).(m\otimes_{\C[\Sl]} \mathbf{v})\\
		&=h_{i+1}(k).(m\otimes_{\C[\Sl]} \mathbf{v})
	\end{align*}
	and hence $\alpha_{p,h_i(k)}(m)=\alpha_{p,h_{i+1}(k)}(m)$ by Lemma \ref{injectivelemma}. Now repeat the same process to get $\alpha_{p,h_i(k)}(m)=\alpha_{p,h_j(k)}(m)$ for $1 \le i<j \le n$.
	
	If instead $\ell=1$, assume $1\le j<i \le n$ and take $\mathbf{v} = v_i$. Then compute (\ref{eq:cartanaction}) and act by $x_{i-1}^-(0)x_{i-1}^+(0)$ to show by a similar process that $\alpha_{p,h_i(k)}(m)=\alpha_{p,h_{i-1}(k)}(m)$.
	
	Now, we show that $\alpha_{p,h_i(k)}=\alpha_{p,x^{\pm}_i(k)}, 1\le i \le n$. Let
	$$\mathbf w=\sigma(1,p)(v_{i}\otimes b_i)$$ 
	where $b_i\in V^{\otimes (\ell-1)}$ is a pure tensor having indices all distinct and not among $\{i,i+1\}$. (If $\ell=1$, take $\mathbf{w} = v_i$.)
	We compute
	\begin{align*}
		h_i(k).(m\otimes_{\C[\Sl]} \mathbf{w})
		&=(x_i^+(k)x_i^-(0)-x_i^-(0)x_i^+(k)).(m\otimes_{\C[\Sl]} \mathbf{w})\\
		&=\alpha_{p,x_i^+(k)}(m)\otimes_{\C[\Sl]} \mathbf w.
	\end{align*}
	Again, Lemma \ref{injectivelemma} shows that $\alpha_{p,h_i(k)}=\alpha_{p,x^+_i(k)}$. A similar argument gives $\alpha_{p,h_i(k)}=\alpha_{p,x^-_i(k)}$.
	
	We now show by induction the second equality of the lemma statement for the case $n \geq 2, k > 0$ (the other cases are similar). The base case $k=1$ is trivial, so assume the statement for $k \geq 1$. From now on, let $\alpha_{p,z_i(k)}=\alpha_{p,k}$ for $1 \le i \le n$. We have
     $$
     [x_{2}^+(k),h_1(1)]. (m \otimes_{\C[\Sl]} u_{2,p}^-)=x_{2}^+(k+1). (m \otimes_{\C[\Sl]} u_{2,p}^-)=\alpha_{p,k+1}(m) \otimes_{\C[\Sl]} u_{2,p}^+ .
     $$
     The left hand side above is equal to $\alpha_{p,k}\alpha_{p,1}(m) \otimes_{\C[\Sl]} u_{2,p}^+ = \left(\alpha_{p,1} \right)^k \alpha_{p,1}(m) \otimes_{\C[\Sl]} u_{2,p}^+$ by the inductive hypothesis, and so the statement follows. By similar arguments, we see that  $\alpha_{p,z_{0}(k)}=\alpha_{p,1}^k\alpha_{p,z_{0}(0)}$, as well as $\alpha_{p,-k}=\alpha_{p,-1}^k$ and $\alpha_{p,z_{0}(-k)}=\alpha_{p,-1}^{k}\alpha_{p,z_{0}(0)}$, completing the proof.
    
\end{proof}

\begin{lemma}\label{Fprop74}
For each $z \in \{ h_i (k), x_j^{\pm} (k) \mid i \in [0,n]; j \in [1, n]; k \in \Z \}$, $m \in M$ (with $M$ as in \S \ref{everyrepnfromfunctor}), set $m.\mathbf{x}_p^k = \alpha_{p,z}(m)$. For $x_0^{\pm} (k)$, set $m.\mathbf{x}_p^k \mathbf{y}_p^{\pm 1} = \alpha_{p,x_0^{\pm} (k)}(m)$. Here, $\alpha_{p,z}$ is as in Lemma \ref{Fprop73}. This assignment defines a right $\on{Aff}^2(\Sl) \cong \C [(\Z^{\ell} \times \Z^{\ell}) \rtimes \Sl]$--module structure on $M$, extending its $\asl \cong \C [\Z^{\ell} \rtimes \Sl]$ structure.
\end{lemma}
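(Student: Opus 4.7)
The plan is to verify each defining relation of $\on{Aff}^2(\Sl)$ from Definition \ref{classicaluntwistedtoroidalhecke} on $M$ under the proposed rules. Relations involving only the $\sigma_i$'s and $\mathbf{y}_j$'s hold automatically because $M$ is, by hypothesis, already an $\asl$-module. What must still be verified splits into three groups: (a) independence of $m.\mathbf{x}_p^k = \alpha_{p,z}(m)$ from the choice of $z$, together with invertibility $\mathbf{x}_p \mathbf{x}_p^{-1} = 1$; (b) the purely $\mathbf{x}$-$\sigma$ relations $\mathbf{x}_m \mathbf{x}_p = \mathbf{x}_p \mathbf{x}_m$, $\mathbf{x}_m \sigma_k = \sigma_k \mathbf{x}_m$ for $m \notin \{k, k+1\}$, and $\sigma_k^{-1} \mathbf{x}_k \sigma_k^{-1} = \mathbf{x}_{k+1}$; and (c) the mixed relations $\mathbf{x}_1 \mathbf{y}_2 = \mathbf{y}_2 \mathbf{x}_1$ and $\mathbf{y}_1 \cdots \mathbf{y}_\ell \mathbf{x}_1 = \mathbf{x}_1 \mathbf{y}_1 \cdots \mathbf{y}_\ell$ that genuinely couple $\mathbf{x}$ and $\mathbf{y}$.

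Group (a) is essentially packaged into Lemma \ref{alphaequality}, whose identity $\alpha_{p,z_i(k)} = (\alpha_{p,z_i(1)})^k$ for all $k \in \Z$ yields both independence of $z$ and the invertibility $\alpha_{p,1}\alpha_{p,-1} = \mathrm{id}$. Group (b) is handled uniformly: to establish each relation, I convert it into an identity of $\alpha$-operators on $M$ and read it off from a corresponding Lie-algebra identity in $\sln[s^{\pm 1},t^{\pm 1}]$ applied to $m \otimes_{\C[\Sl]} \mathbf{v}$ for a well-chosen pure tensor $\mathbf{v}$ of distinct entries, using Lemma \ref{Fprop73} to expand the action and Lemma \ref{injectivelemma} to extract the unique $M$-coefficient. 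Concretely, $\mathbf{x}_m \mathbf{x}_p = \mathbf{x}_p \mathbf{x}_m$ comes from $[h_1(1), h_1(1)] = 0$ applied twice at positions $m$ and $p$; $\mathbf{x}_m \sigma_k = \sigma_k \mathbf{x}_m$ for $m \notin \{k,k+1\}$ is immediate from the explicit formula $\alpha_{j,z(k)}(m) = \alpha_{1,z(k)}(m.\sigma(1,j))\sigma(1,j)^{-1}$ established in the proof of Lemma \ref{Fprop72}, because $\sigma(1,m)$ and $\sigma_k$ have disjoint support; and $\sigma_k \mathbf{x}_k \sigma_k = \mathbf{x}_{k+1}$ is the image on $M$ of the transposition identity $\sigma_k \sigma(1,k) \sigma_k = \sigma(1,k+1)$ in $\Sl$.

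The main obstacle is (c). For $\mathbf{x}_1 \mathbf{y}_2 = \mathbf{y}_2 \mathbf{x}_1$ my plan is to apply $h_1(1) x_0^\pm(0) - x_0^\pm(0) h_1(1) = [h_1(1), x_0^\pm(0)]$ to $m \otimes_{\C[\Sl]} \mathbf{v}$ for $\mathbf{v}$ chosen so that the tensor $(h_1)_1 (x_\theta^\mp)_2 \mathbf{v}$ is a nonzero pure tensor appearing uniquely in the expansion. By Lemma \ref{Fprop73} both sides of the first two terms expand as double sums over position pairs $(p,j)$, while the commutator $[h_1(1), x_0^\pm(0)]$, being a scalar multiple of $x_0^\pm(1)$, produces only single-position terms $\sum_j (\ldots) \otimes (x_\theta^\mp)_j \mathbf{v}$; in particular its coefficient at $(h_1)_1 (x_\theta^\mp)_2 \mathbf{v}$ vanishes. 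Comparing the remaining coefficients via Lemma \ref{injectivelemma} gives $\alpha_{1, h_1(1)} \circ \alpha_{2, x_0^\pm(0)} = \alpha_{2, x_0^\pm(0)} \circ \alpha_{1, h_1(1)}$, i.e., $\mathbf{x}_1 \mathbf{y}_2^{\pm 1} = \mathbf{y}_2^{\pm 1} \mathbf{x}_1$. Running the same argument at every pair $(1,p)$ with $p \geq 2$, and at $(1,1)$ with a mildly different test vector, yields $\mathbf{x}_1 \mathbf{y}_p = \mathbf{y}_p \mathbf{x}_1$ for all $p$; the long relation $\mathbf{y}_1 \cdots \mathbf{y}_\ell \mathbf{x}_1 = \mathbf{x}_1 \mathbf{y}_1 \cdots \mathbf{y}_\ell$ then follows by sliding $\mathbf{x}_1$ past the pairwise commuting $\mathbf{y}_j$'s one at a time. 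Finally, the consistency of the second defining rule $m.\mathbf{x}_p^k \mathbf{y}_p^{\pm 1} = \alpha_{p, x_0^\pm(k)}(m)$ with the separate actions of $\mathbf{x}_p^k$ and $\mathbf{y}_p^{\pm 1}$ falls out of $\alpha_{p, x_0^\pm(k)} = \alpha_{p,1}^k \alpha_{p, x_0^\pm(0)}$ from Lemma \ref{alphaequality} together with the commutativity $\mathbf{x}_p \mathbf{y}_p = \mathbf{y}_p \mathbf{x}_p$ just established.
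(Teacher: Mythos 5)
Your overall strategy matches the paper's: check the $\mathbf x$-only and $\mathbf x$-$\sigma$ relations via Lemma \ref{alphaequality} and the commutativity of the $h_i(k)$, and extract the two mixed relations by applying Lie brackets of the form $[h(1), x_0^\pm(0)]$ to $m\otimes\mathbf v$ for a pure tensor $\mathbf v$ with distinct factors, using Lemma \ref{Fprop73} to expand and Lemma \ref{injectivelemma} to read off the $M$-coefficient. Two pieces of your sketch, however, do not hold up as written.

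First, your claim that the commutator $[h_1(1),x_0^\pm(0)]$ ``produces only single-position terms; in particular its coefficient at $(h_1)_1(x_\theta^\mp)_2\mathbf v$ vanishes'' is incorrect. With $\mathbf v = v_2\otimes v_1\otimes v_3\otimes\cdots$, the vector $(h_1)_1(x_\theta^-)_2\mathbf v$ is a nonzero scalar multiple of $\mathbf v' := (x_\theta^-)_2\mathbf v$, and the commutator term $\mp x_0^\pm(1)$ acts on $m\otimes\mathbf v$ precisely by producing $\mp\, m.\mathbf x_2\mathbf y_2^{\pm1}\otimes\mathbf v'$. So the commutator does contribute at exactly that vector; in the paper's computation it is what cancels the diagonal $(p,j)=(2,2)$ term of $x_0^\pm(0)h_1(1)$, and the surviving $(1,2)$ coefficients on the two sides give $m.\mathbf y_2\mathbf x_1 = m.\mathbf x_1\mathbf y_2$. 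The identity you want is true, but not for the reason you state; if you expand both sides honestly you will see a cancellation rather than a vanishing.

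Second, the phrase ``at $(1,1)$ with a mildly different test vector'' hides a real obstruction: for $n\ge 2$ the Cartan element $h_1$ does not work at the pair $(1,1)$. With $\mathbf v = v_1\otimes\cdots\otimes v_\ell$ and $\mathbf v'=(x_\theta^-)_1\mathbf v = v_{n+1}\otimes v_2\otimes\cdots$, one has $h_1v_{n+1}=0$, so $(h_1)_1(x_\theta^-)_1\mathbf v = 0$ and no information about $\mathbf x_1\mathbf y_1$ can be extracted this way. You must change the Cartan element, not just the test vector; the paper uses $h_0(1)$, whose relevant virtue is that $h_0$ acts by nonzero scalars on both $v_1$ and $v_{n+1}$ (so $[h_0(1),x_0^+(0)] = 2x_0^+(1)$ and the bookkeeping closes). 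Relatedly, you propose to prove $\mathbf x_1\mathbf y_p = \mathbf y_p\mathbf x_1$ for every $p$ by fresh commutator computations, which is more than necessary: once $\mathbf x_1\mathbf y_2 = \mathbf y_2\mathbf x_1$ and the (already established) $\sigma$-conjugation relations are in hand, the cases $p\ge 3$ follow formally, and the long relation then reduces to the single identity $\mathbf x_1\mathbf y_1 = \mathbf y_1\mathbf x_1$.
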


\begin{proof}

The $\asl$--module structure on $M$ is inherited from Theorem \ref{nonquantumuntwistedequiv} (the $k=0$ case), so we only need to show that the relations from Definition \ref{classicaluntwistedtoroidalhecke} involving $\mathbf{x}_j^{\pm 1}$ in $\on{Aff}^2(\Sl)$ hold. By Lemma \ref{injectivelemma}, it is sufficient to show this on $m \otimes_{\C[\Sl]} \mathbf{v}$ for arbitrary $m$ but a particular choice of $\mathbf{v}$ such that $\mathbf{v}$ has $\ell$ distinct components.

All relations except $\mathbf{y}_1 \cdots \mathbf{y}_{\ell} \mathbf{x}_1 = \mathbf{x}_1 \mathbf{y}_1 \cdots \mathbf{y}_{\ell}$ and $\mathbf{x}_1 \mathbf{y}_2 = \mathbf{y}_2 \mathbf{x}_1$ follow from Lemma \ref{alphaequality}, noting that all $h_i (k)$ are commuting. We show that the remaining two relations hold.

First we show $m.\mathbf{y}_2 \mathbf{x}_1 = m.\mathbf{x}_1 \mathbf{y}_2$. Note that this relation does not appear if $\ell=1$, so we will assume $\ell \geq 2$ (and hence $n \geq 2$; recall $\ell \leq n$).

Take $\mathbf{v} = v_2 \otimes v_1 \otimes v_3 \otimes v_4 \otimes \cdots \otimes v_{\ell}$ and $\mathbf{v}' = v_2 \otimes v_{n+1} \otimes v_3 \otimes v_4 \otimes \cdots \otimes v_{\ell}$. Now
$$(h_1(1) x_0^{+} (0)). (m \otimes_{\C[\Sl]} \mathbf{v}) = h_1 (1).(m.\mathbf{y}_2 \otimes_{\C[\Sl]} \mathbf{v}') = -m.\mathbf{y}_2 \mathbf{x}_1 \otimes_{\C[\Sl]} \mathbf{v}'.$$
By relations in $\mathfrak{sl}_{n+1}[s^{\pm 1}, t^{\pm 1}]$ we can write the left hand side as
$$(x_0^{+} (0) h_1(1) - x_0^{+} (1)). (m \otimes_{\C[\Sl]} \mathbf{v}) = x_0^{+}(0).\left((m.\mathbf{x}_2 - m.\mathbf{x}_1) \otimes_{\C[\Sl]} \mathbf{v} \right) - m.\mathbf{x}_2 \mathbf{y}_2 \otimes_{\C[\Sl]} \mathbf{v}'$$
$$ =(m.\mathbf{x}_2 \mathbf{y}_2 - m.\mathbf{x}_1 \mathbf{y}_2) \otimes_{\C[\Sl]} \mathbf{v}' - m.\mathbf{x}_2 \mathbf{y}_2 \otimes_{\C[\Sl]} \mathbf{v}' = -m.\mathbf{x}_1 \mathbf{y}_2 \otimes_{\C[\Sl]} \mathbf{v}'.$$
Thus Lemma \ref{injectivelemma} implies that $m.\mathbf{y}_2 \mathbf{x}_1 = m.\mathbf{x}_1 \mathbf{y}_2$ for all $m \in M$. 

Now we show that $m.(\mathbf{y}_1 \cdots \mathbf{y}_{\ell} \mathbf{x}_1) = m.(\mathbf{x}_1 \mathbf{y}_1 \cdots \mathbf{y}_{\ell})$. When combined with the previous relation and induction, it is sufficient to demonstrate $m.\mathbf{y}_1 \mathbf{x}_1 = m.\mathbf{x}_1 \mathbf{y}_1$. 

Take $\mathbf{v} = v_1 \otimes \cdots \otimes v_{\ell}$ and $\mathbf{v}' = v_{n+1} \otimes v_2 \otimes v_3 \otimes \cdots \otimes v_{\ell}$.

Then
$$h_0 (1) x_0^+ (0) .(m \otimes_{\C[\Sl]} \mathbf{v}) = m.(\mathbf{y}_1 \mathbf{x}_1) \otimes_{\C[\Sl]} \mathbf{v}'.$$
By the relations in $\mathfrak{sl}_{n+1}[s^{\pm 1}, t^{\pm 1}]$, this is equal to 
$$(x_0^+ (0) h_0 (1) + 2 x_0^+ (1)). (m \otimes_{\C[\Sl]} \mathbf{v}) = x_0^+ (0). (-m.\mathbf{x}_1 \otimes_{\C[\Sl]} \mathbf{v}) + 2 m.\mathbf{x}_1 \mathbf{y}_1 \otimes_{\C[\Sl]} \mathbf{v}' = m.(\mathbf{x}_1 \mathbf{y}_1) \otimes_{\C[\Sl]} \mathbf{v}'.$$
Therefore, Lemma \ref{injectivelemma} implies that $m.(\mathbf{x}_1 \mathbf{y}_1) = m.(\mathbf{y}_1 \mathbf{x}_1)$ for all $m \in M$.

\end{proof}

This completes the proof of Theorem \ref{classicaluntwistedtoroidalequiv}.
\end{subsection}

\section{Schur-Weyl Duality for Multiloop Lie Algebras}\label{SWmultiloop}

In this section we introduce a technique to prove a  Schur-Weyl duality for $ L^m(\g)=\g \otimes \C[t_1^{\pm 1}, \cdots, t_{m}^{\pm 1}]$ with $m\ge 2$ where as usual $\mathfrak g$ is of type $A_n$. 

\subsection{Gluing multiloop Lie algebras from loop Lie algebras} For any complex  Lie algebra $\mathfrak a $, set 
 $ L^m(\mathfrak a)=\mathfrak a  \ \otimes \C[t_1^{\pm 1}, \cdots, t_{m}^{\pm 1}]$. Clearly  $ L^{m+1}(\mathfrak a )=L(L^m(\mathfrak a))$ and for $i\in[1,m]$ we let  $  L_i(\g)=\g \otimes \cc[t_i^{\pm 1}]$.
 Let $$ \iota_i: L_i(\g)\to L^m(\g),\ \ \ \iota_{i, 0}: \g\to L_i(\g),\ \ \ \iota^m_0: \g\to L^m (\g)$$ be the obvious embedddings of Lie algebras and note that  $ \iota^m_0=\iota_i\circ \iota_{i,0}$ for all $i\in[1,m]$.

\begin{prop}\label{liftingprop}
Let $ \frak g$ be any perfect Lie algebra (i.e. $\mathfrak g = [\mathfrak g,\mathfrak g]$).  Let $L$ be any Lie algebra over $\cc$.  Assume that we have  system of Lie algebra  homomorphisms $ \rho_i: L_i(\frak g)\to L$ such that, 
\begin{itemize}
\item[(i)] $\rho_0:=\rho_i\circ \iota_{i,0}=\rho_j\circ \iota_{j,0}$,
\item[(ii)] for all $c \in [1,m]$ and all subsets $\{i_1,\dots, i_c\}$ of $\{1,\dots, m\}$ we have that
$$\sum_{x_1,\dots, x_c\in \mathfrak{g}} [x_1, [x_2, \cdots[x_{c-1}, x_c]\cdots ]]=0 \implies
$$
$$
\sum_{x_1,\dots, x_c \in \mathfrak g} [\rho_{i_1}(x_1\otimes t_{i_1}^{p_1}), [\rho_{i_2}(x_2\otimes t_{i_2}^{p_2}), \cdots[\rho_{i_{c-1}}(x_{c-1}\otimes t_{i_{c-1}}^{p_{c-1}}), \rho_{i_c}(x_c\otimes t_{i_c}^{p_c})]\cdots ]]=0,\ \ \ (p_1,\cdots ,p_c)\in\mathbb Z^c,
$$
and
\item[(iii)]
\begin{align*}
&[\rho_{i_1}(x_1\otimes t_{i_1}^{p_{i_1}}), [\rho_{i_2}(x_2\otimes t_{i_2}^{p_{i_2}}), \cdots[\rho_{i_{c-1}}(x_{c-1}\otimes t_{i_{c-1}}^{p_{i_{c-1}}}), \rho_{i_c}(x_c\otimes t_{i_c}^{p_{i_c}})]\cdots ]]\nonumber\\
&=[\rho_{j_1}(x_1\otimes t_{j_1}^{p_{j_1}}), [\rho_{j_2}(x_2\otimes t_{j_2}^{p_{j_2}}), \cdots[\rho_{j_{c-1}}(x_{c-1}\otimes t_{j_{c-1}}^{p_{j_{c-1}}}), \rho_{j_c}(x_c\otimes t_{j_c}^{p_{j_c}})]\cdots ]]
\end{align*}
whenever $(j_1,j_2,\dots, j_c)$ is a permutation of $(i_1,i_2,\dots, i_c).$
\end{itemize}
Then there is a unique Lie algebra homomorphism $ \rho:L^m(\frak g)\to L$ such that $ \rho_i=\rho\circ \iota_i$.
\end{prop}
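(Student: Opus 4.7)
The plan is to define $\rho$ by an iterated-bracket formula on a spanning set of $L^m(\g)$, using the perfectness of $\g$ to write elements as brackets, and then to verify well-definedness and the Lie-homomorphism property by invoking the three hypotheses in turn.

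First I would handle uniqueness. Since $\g$ is perfect, iterating gives that any $y\in\g$ admits an expansion $y=\sum_\alpha [x_1^\alpha,[x_2^\alpha,\ldots,[x_{c-1}^\alpha,x_c^\alpha]\ldots]]$ of any desired depth $c\ge 1$. Pulling scalars through each slot gives
$$y\otimes t_{i_1}^{p_{i_1}}\cdots t_{i_c}^{p_{i_c}} = \sum_\alpha [x_1^\alpha\otimes t_{i_1}^{p_{i_1}},[x_2^\alpha\otimes t_{i_2}^{p_{i_2}},\ldots,[x_{c-1}^\alpha\otimes t_{i_{c-1}}^{p_{i_{c-1}}}, x_c^\alpha\otimes t_{i_c}^{p_{i_c}}]\ldots]]$$
in $L^m(\g)$, provided the $i_k$ are distinct. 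So $L^m(\g)$ is generated as a Lie algebra by $\bigcup_i \iota_i(L_i(\g))$, and any $\rho$ with $\rho\circ\iota_i=\rho_i$ is forced to equal, on such a monomial, the evident iterated bracket in $L$, supplemented by $\rho_0(y):=\rho_i(y\otimes 1)$ in the $c=0$ case (unambiguous by (i)). This both establishes uniqueness and dictates the only possible formula for $\rho$.

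For existence I would take this formula as the \emph{definition} of $\rho$ on monomials and extend linearly. The definition depends a priori on the chosen depth-$c$ expansion of $y$; two choices differ by a Lie-polynomial identity $\sum_\gamma [w_1^\gamma,\ldots,w_c^\gamma]=0$ in $\g$, and hypothesis (ii), applied with the index set $\{i_1,\ldots,i_c\}$ and exponents $(p_{i_1},\ldots,p_{i_c})$, says precisely that the corresponding expression vanishes in $L$. So $\rho$ is well-defined on monomials, and by construction $\rho\circ\iota_i=\rho_i$.

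The hard part will be verifying that $\rho$ preserves brackets. Because $L^m(\g)$ is generated as a Lie algebra by $\bigcup_i \iota_i(L_i(\g))$, a Jacobi-type reduction lets one reduce to checking $\rho([z,A])=[\rho(z),\rho(A)]$ when $z=x\otimes t_i^p\in\iota_i(L_i(\g))$ and $A=y\otimes t_{i_1}^{p_{i_1}}\cdots t_{i_c}^{p_{i_c}}$. Expanding $\rho(A)$ by the definition and then applying iterated Jacobi in $L$ rewrites $[\rho_i(x\otimes t_i^p),\rho(A)]$ as a sum of iterated brackets of depth $c$ or $c+1$, depending on whether $i$ agrees with some $i_k$ or is new. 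In the coinciding case ($i=i_k$), the Lie-homomorphism property of $\rho_i$ collapses the inner bracket $[\rho_i(x\otimes t_i^p),\rho_i(x_k\otimes t_i^{p_{i_k}})]$ to $\rho_i([x,x_k]\otimes t_i^{p+p_{i_k}})$, and the result is recognizable as $\rho([x,y]\otimes\ldots)$ computed via the Jacobi expansion of $[x,y]$ in $\g$. In the new-index case the depth rises to $c+1$, and hypothesis (iii) is invoked to move the new factor $\rho_i(x\otimes t_i^p)$ into the canonical first slot used in the definition of $\rho([x,y]\otimes t_i^p\,t_{i_1}^{p_{i_1}}\cdots t_{i_c}^{p_{i_c}})$. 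The most delicate subcase is $p+p_{i_k}=0$, where the depth of the iterated bracket drops, and one must reinvoke (ii) to identify the two expressions as images of a shorter Lie-polynomial relation for $[x,y]$ in $\g$.
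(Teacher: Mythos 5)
Your proposal follows essentially the same route as the paper's proof: define $\rho$ on monomials via iterated brackets of elements of $\iota_i(L_i(\g))$, exploiting that $\g$ is perfect, invoke condition (ii) for independence of the chosen decomposition, and combine the Jacobi identity with condition (iii) to verify the homomorphism property. The only organizational difference is that the paper sets this up by induction on $m$ via $L^m(\g)=L(L^{m-1}(\g))$, which makes the bracket verification more manageable by introducing one new $\rho_i$ at a time; and the ``delicate subcase $p+p_{i_k}=0$'' you flag is not actually an issue, since the iterated-bracket formula (the paper's equation~\eqref{eq:rhobracket}) tolerates zero exponents (because $\g\hookrightarrow L_i(\g)$), so no depth drop occurs and no further appeal to~(ii) is needed there.
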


\begin{proof} 
In this proof, by $\rho_i(x)$ for $x \in \mathfrak g$ we will mean $(\rho_i \circ \iota_{i,0})(x)$. By (i), this means that $\rho_i(x)=\rho_j(x)$.
We now prove the statement of the proposition by induction on $m$. The statement is vacuously true if $m=1$. For the inductive step, we proceeds as follows.

For all $x\in \frak g$ choose a decomposition $x = \sum_{j} [x_{j}', x_{j}'']$ (a finite sum). We define inductively:
$$
\rho(x \otimes t_1^{p_1}t_2^{p_2} \cdots t_m^{p_m})=\sum_{j} [\rho(x_{j}' \otimes t_1^{p_1} \cdots t_{m-1}^{p_{m-1}}), \rho_m(x_{j}'' \otimes t_m^{p_m})], \hspace{3mm} m>1.$$

We need to show two facts: (1) $\rho$ is well-defined, i.e. it doesn't depend on the choice of decomposition, (2) $\rho$ does indeed define a Lie algebra homomorphism. The main step of this argument is to show that $\rho$, assumed well-defined for $m-1$, can be expressed in the form:
\begin{equation}
\rho(x\otimes t^{\mathbf p} )=\sum_{x_{1}',\dots, x_{m}' \in \mathfrak g}[\rho_{m}(x_{m}'\otimes t_{m}^{p_{m}}), [\rho_{m-1}(x_{m-1}'\otimes t_{{m-1}}^{p_{m-1}}), \cdots[\rho_{2}(x_{2}'\otimes t_{2}^{p_{2}}),\rho_1(x_{1}' \otimes t_{1}^{p_{1}})]\cdots ]] \label{eq:rhobracket}
\end{equation}
where
$$
x=\sum_{x_{1}',\dots, x_{m}' \in \mathfrak g}[x_{m}', [x_{m-1}', \cdots[x_{2}',x_{1}' ]\cdots ]].
$$
This is clear when $m=0,1$ so assume that (\ref{eq:rhobracket}) holds for $m-1$, and compute:
\begin{align*}
&\sum_{j} [\rho(x_{j}' \otimes t_{1}^{p_1} \cdots t_{{m-1}}^{p_{m-1}}), \rho_m(x_{j}'' \otimes t_m^{p_m})]=\sum_{j} [ \rho_m(-x_{j}'' \otimes t_m^{p_m}),[\rho(x_{j}' \otimes t_{1}^{p_1} \cdots t_{{m-1}}^{p_{m-1}})]]\\
&=\sum_j\sum_{x_{1,j}',\dots, x_{m-1,j}' \in \mathfrak g} [\rho_m(-x_{j}''\otimes t_m^{p_m}),[\rho_{m-1}(x_{m-1,j}'\otimes t_{m-1}^{p_{m-1}}), [ \cdots[\rho_{2}(x_{2,j}'\otimes t_{2}^{p_{2}}),\rho_1(x_{1,j}' \otimes t_{1}^{p_{1}})]\cdots ]]]
\end{align*}
where
\begin{align*}
\sum_j\sum_{x_{1,j}',\dots, x_{m-1,j}' \in \mathfrak g} [-x_{j}'',[x_{m-1,j}', [ \cdots[x_{2,j}',x_{1,j}' ]\cdots ]]]=\sum_j[-x_j'',x_j']=x
\end{align*}
as desired.

Now, we use (\ref{eq:rhobracket}) to prove that $\rho$ is well-defined on $L^m(\frak g)$. In particular, it is independent of the chosen decomposition. Let $x=\sum_i [x_{i,1}',x_{i,1}'']=\sum_i[x_{i,2}',x_{i,2}'']$ be two distinct decompositions and use (\ref{eq:rhobracket}) to write
\begin{align*}
&\sum_{j=1}^2(-1)^j\sum_i [\rho(x_{i,j}'\otimes t_1^{p_1}t_2^{p_2}\cdots t_{m-1}^{p_{m-1}}),\rho_m(x_{i,j}''\otimes t_m^{p_m})]\nonumber\\
&=\sum_{j=1}^2(-1)^j\sum_i\sum_{x_{i,j,1}',\dots, x_{i,j,m-1}' \in \mathfrak g}[-\rho_m(x_{i,j}''\otimes t_m^{p_m}),[\rho_{m-1}(x_{i,j,m-1}'\otimes t_{m-1}^{p_{m-1}}), [\cdots[\rho_{2}(x_{i,j,2}'\otimes t_{2}^{p_{2}}),\rho_1(x_{i,j,1}' \otimes t_{1}^{p_{1}})]\cdots ]]].\label{eq:decomp}
\end{align*}
Now, observe that, by our choices
$$
\sum_{j=1}^2(-1)^j\sum_i\sum_{x_{i,j,1}',\dots, x_{i,1,m-1}' \in \mathfrak g}[-x_{i,j}'',[x_{i,j,m-1}', [\cdots [x_{i,j,2}',x_{i,j,1}']\cdots ]]]=\sum_{j=1}^2(-1)^j\sum_i[x_{i,j}',x_{i,j}'']=x-x=0.
$$
Therefore condition (ii) applies to show:
$$
\sum_i [\rho(x_{i,1}'\otimes t_1^{p_1}t_2^{p_2}\cdots t_{m-1}^{p_{m-1}}),\rho_m(x_{i,1}''\otimes t_m^{p_m})]=\sum_i [\rho(x_{i,2}'\otimes t_1^{p_1}t_2^{p_2}\cdots t_{m-1}^{p_{m-1}}),\rho_m(x_{i,2}''\otimes t_m^{p_m})].
$$
Hence, the decomposition of $x$ does not affect the definition of $\rho.$

Now we show that $\rho$ is a homomorphism of  Lie algebras. 
We want to show that $$
[\rho(x\otimes t^\mathbf s),\rho(y\otimes t^\mathbf r)]=\rho([x,y]\otimes t^{\mathbf s + \mathbf r}),\ \ x,y\in\mathfrak  g, \   \mathbf r, \mathbf s\in\mathbb Z^m.
$$
Write $\mathbf r = (r_1, \ldots , r_m), \mathbf s = (s_1, \ldots , s_m)$. We first check the equality in the case when $s_j=0$ for all $j$. Here the result holds by definition. Then we assume that this has been proved for $s_j=0$ for $j>p$ and prove it for $s_j=0$ with $j>p+1$.
By (\ref{eq:rhobracket}) we write:
$$
\rho(y\otimes t^{\mathbf r} )=\sum_{y_{1}',\dots, y_{m}' \in \mathfrak g}[\rho_m(y_m'\otimes t_m^{r_m}), [\rho_{m-1}(y_{m-1}'\otimes t_{{m-1}}^{r_{m-1}}), \cdots[\rho_2(y_2'\otimes t_2^{r_2}),\rho_1(y_1' \otimes t_1^{r_1})]\cdots ]].
$$
Then:
\begin{align*}
&\sum_{y_{1}',\dots, y_{m}' \in \frak g}[\rho_1(x\otimes t_1^{s_1}),[\rho_m(y_m'\otimes t_m^{r_m}), [\rho_{m-1}(y_{m-1}'\otimes t_{m-1}^{r_{m-1}}), \cdots[\rho_2(y_2'\otimes t_2^{r_2}),\rho_1(y_1' \otimes t_1^{r_1})]\cdots ]]]\\
&=\sum_{y_1',\dots, y_m' \in \mathfrak g}[\rho_m(x\otimes t_m^{r_m}),[\rho_{m-1}(y_{m}'\otimes t_{m-1}^{r_{m-1}}), [ \cdots[\rho_{1}(y_2'\otimes t_1^{r_1}),\rho_1(y_1' \otimes t_1^{s_1})]\cdots ]]]\\
&=\sum_{y_1',\dots, y_m' \in \mathfrak g}[\rho_m(x\otimes t_m^{r_m}),[\rho_{m-1}(y_m'\otimes t_{m-1}^{r_{m-1}}), [ \cdots[\rho_2(y_3'\otimes t_2^{r_2}),\rho_{1}([y_2',y_1']\otimes t_1^{r_1+s_1})]\cdots ]]]\\
&=\rho([x,y]\otimes t^{\mathbf r}t_1^{s_1})
\end{align*}
using (iii) in the second line to permute the $\rho_{i}$ and powers of $t_i$.
Now, assume $\rho$ is a homomorphism for $1\le p < m$. The definition of $\rho$ gives:
$$
\rho(x\otimes t_1^{s_1}\cdots t_{p+1}^{s_{p+1}})=\sum_j [\rho(x_j'\otimes t_1^{s_1}\cdots t_{p}^{s_{p}}),\rho_u(x_j''\otimes t_{p+1}^{s_{p+1}})].
$$
An argument similar to the one given above shows that, for all $z \in \frak g$
$$
[\rho_{p+1}(z\otimes t_{p+1}^{s_{p+1}}), \rho(y\otimes t^{\mathbf r})]=\rho([z,y]\otimes t^{\mathbf r}t_{p+1}^{s_{p+1}}).
$$
Finally, we use the Jacobi Identity and induction to write:
\begin{align*}
    &\left [\sum_j [\rho(x_j'\otimes t_1^{s_1}\cdots t_{p}^{s_{p}}),\rho_{p+1}(x_j''\otimes t_{p+1}^{s_{p+1}})], \rho(y\otimes t^{\mathbf r})\right ]\\
    &=\sum_j([\rho(x_j'\otimes t_1^{s_1}\cdots t_p^{s_p}),[\rho_{p+1}(x_j''\otimes t_{p+1}^{s_{p+1}}), \rho(y\otimes t^{\mathbf r})]]-[\rho_{p+1}(x_j''\otimes t_{p+1}^{s_{p+1}}), [\rho(x_j'\otimes t_1^{s_1}\cdots t_p^{s_p}),\rho(y\otimes t^{\mathbf r})]])\\
    &=\rho\left(\sum_j([x_j',[x_j'',y]]-[x_j'', [x_j',y]])\otimes t^{\mathbf s + \mathbf r}\right )\\
    &=\rho([x,y]\otimes t^{\mathbf r + \mathbf s}).
\end{align*}

Hence the proposition is proved. The maps and algebras involved are collected in the following commutative diagram.


\begin{tikzpicture}[node distance=2cm, auto]
  \node (A) {$L_i(\g)$};
  \node (g) [node distance=3cm, left of=P, below of=A] {$\g$};
  \node (Lm) [node distance=3cm, right of=P, below of=A] {$L^m(\g)$};
   \node (Lj) [node distance=6cm, below of=A] {$L_j(\g)$};
  \node (L) [node distance=4cm, right of=Lm, below of=Lm] {$L$};
  \draw[->] (g) to node {$\iota_0^m$} (Lm);
  \draw[->] (g) to node {$\iota_{i,0}$} (A);
  \draw[->] (A) to node {$\iota_i$} (Lm);
   \draw[->] (g) to node [swap] {$\iota_{j,0}$} (Lj);
   \draw[->] (Lj) to node [swap] {$\iota_j$} (Lm);
   \draw[->, dashed] (Lm) to node {$\rho$} (L);
   \draw[->] (Lj) to node [swap] {$\rho_j$} (L);
    \draw[->, bend left] (A) to node {$\rho_i$} (L);
  \draw[->] (-3.25,-3.25) to[out=225,in=245] (7,-7.25);
  \node at (-2, -7) {$\rho_0$};
\end{tikzpicture}

\end{proof}

We are interested in the case when $\mathfrak g$ is a finite dimensional semisimple Lie algebra. Explicitly, we use the decomposition:
$$
x_i^+ = \frac{1}{2}[h_i,x_i^+], \qquad x_i^- = -\frac{1}{2}[h_i,x_i^-], \qquad h_i = [x_i^+,x_i^-].$$

\begin{rmk}
    In case $\frak{g}$ is a finite dimensional semisimple Lie algebra, it can be checked by a similar argument that condition (iii) is irrelevant. We do not show this fact here.
\end{rmk}

\subsection{Gluing representations for multiloop Lie algebras from loop Lie algebras}
The following is elementary and will be useful for what follows.
\begin{lemma}\label{uniqueliftedmodstruc}
Let $A_i$ for $i=1, 2, \ldots , m$ be a collection of subalgebras of an associative algebra $A$ such that the $A_i$ generate $A$.
\begin{itemize}
\item[(a)] For all $A$--modules $M,N$ with linear map $f : M \to N$, the map $f$ is an $A$-homomorphism if and only if $f$ is an $A_i$-homomorphism for each $i$.
\item[(b)] Given algebra homomorphisms $\phi: A \to B$ and $\phi': A \to B$, if $\phi|_{A_i}= \phi'|_{A_i}$ for each $i$, then $\phi = \phi'$.
\end{itemize}
\end{lemma}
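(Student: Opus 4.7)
The plan is to observe that both parts are immediate consequences of the hypothesis that the $A_i$ generate $A$ as an algebra, combined with a straightforward induction on word length in the generators. There is no real obstacle here; the argument is essentially bookkeeping.

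For part (a), the forward direction is immediate: if $f$ is an $A$-homomorphism then it is in particular an $A_i$-homomorphism for each $i$, since $A_i \subseteq A$. For the converse, assume $f$ is $A_i$-linear for every $i$. Since the $A_i$ generate $A$, every element of $A$ is a linear combination of products $a_{i_1} a_{i_2} \cdots a_{i_k}$ with $a_{i_j} \in A_{i_j}$. By linearity of $f$, it suffices to show $f(a \cdot m) = a \cdot f(m)$ for such a product $a = a_{i_1} \cdots a_{i_k}$. I would do this by induction on $k$. The base case $k=1$ is the hypothesis. For the inductive step, set $a'' = a_{i_2} \cdots a_{i_k}$ and compute
\[
f(a_{i_1} a'' \cdot m) = f\bigl(a_{i_1} \cdot (a'' \cdot m)\bigr) = a_{i_1} \cdot f(a'' \cdot m) = a_{i_1} \cdot (a'' \cdot f(m)) = a \cdot f(m),
\]
using $A_{i_1}$-linearity in the second equality and the inductive hypothesis in the third.

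For part (b), the same generation principle applies. Since $\phi$ and $\phi'$ are algebra homomorphisms, each respects products, so their values on any product $a_{i_1} \cdots a_{i_k}$ with $a_{i_j} \in A_{i_j}$ are determined by their values on the individual factors $a_{i_j}$. By hypothesis $\phi|_{A_{i_j}} = \phi'|_{A_{i_j}}$, so these products are sent to the same element of $B$ by $\phi$ and $\phi'$. By linearity they agree on all linear combinations of such products, and since the $A_i$ generate $A$ such combinations exhaust $A$. Hence $\phi = \phi'$.

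In both parts the only ingredient beyond definitions is the generation hypothesis, so the entire lemma can be dispatched in a few lines; I expect the write-up to be shorter than the statement itself, and there is no nontrivial step to flag as an obstacle.
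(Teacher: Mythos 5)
Your argument is correct, and the paper itself offers no proof of this lemma (it is stated with the remark that it ``is elementary''), so your write-up simply supplies the standard argument one would expect: reduce to products of generators, then induct on word length for (a) and use multiplicativity and linearity of algebra homomorphisms for (b). Nothing to flag.
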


\begin{rmk}
Note that any Lie algebra $L$ can be embedded inside its universal enveloping algebra $U(L)$, and a generating set of $L$ is also a generating set of $U(L)$. In this way Lemma \ref{uniqueliftedmodstruc} also applies to Lie algebras.
\end{rmk}

\begin{cor} Let $W$ be a $\frak g$--module and assume that there exist Lie algebra homomorphisms $\rho_i: L_i(\frak g)\to End W$ 
 satisfying the conditions of Proposition \ref{liftingprop}. 
Then there is a unique $L^m(\frak g)$--module structure on $W$ whose restriction to $L_i$ via $ \iota_i$ is the given $L_i(\frak g)$--module structure. 
\end{cor}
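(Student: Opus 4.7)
The plan is to invoke Proposition \ref{liftingprop} directly, taking $L = \operatorname{End} W$ equipped with its commutator bracket as a Lie algebra. By hypothesis the given maps $\rho_i: L_i(\mathfrak{g}) \to \operatorname{End} W$ are Lie algebra homomorphisms satisfying conditions (i)--(iii) of the proposition, so the proposition produces a Lie algebra homomorphism $\rho: L^m(\mathfrak{g}) \to \operatorname{End} W$ with $\rho \circ \iota_i = \rho_i$ for each $i$. Declaring $X \cdot w := \rho(X)(w)$ endows $W$ with an $L^m(\mathfrak{g})$-module structure whose restriction via $\iota_i$ recovers the given $L_i(\mathfrak{g})$-action. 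This establishes existence.

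For uniqueness, suppose that $\rho, \rho': L^m(\mathfrak{g}) \to \operatorname{End} W$ are any two Lie algebra homomorphisms both of whose restrictions via $\iota_i$ coincide with $\rho_i$ for every $i$. The plan is to apply Lemma \ref{uniqueliftedmodstruc}(b), together with the remark following it that extends the lemma to Lie algebras. For this, I need to verify that the subalgebras $\iota_i(L_i(\mathfrak{g}))$ jointly generate $L^m(\mathfrak{g})$ as a Lie algebra. Using the perfectness of $\mathfrak{g}$, for any pure tensor with $m \geq 2$ I can write $x = \sum_j [x_j', x_j'']$ and then
\begin{equation*}
x \otimes t_1^{p_1} \cdots t_m^{p_m} = \sum_j \bigl[\, x_j' \otimes t_1^{p_1} \cdots t_{m-1}^{p_{m-1}},\; x_j'' \otimes t_m^{p_m}\, \bigr].
\end{equation*}
By induction on $m$ the first factor lies in the Lie subalgebra generated by $\iota_1(L_1), \dots, \iota_{m-1}(L_{m-1})$ and the second lies in $\iota_m(L_m)$, so the $\iota_i(L_i(\mathfrak{g}))$ Lie-generate $L^m(\mathfrak{g})$. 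Hence $\rho$ and $\rho'$ agree on a generating set and must coincide.

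The argument is essentially formal given the preceding results; the only substantive point is the verification that the loop subalgebras $L_i(\mathfrak{g})$ Lie-generate the multiloop algebra $L^m(\mathfrak{g})$, which is precisely where the perfectness hypothesis on $\mathfrak{g}$ enters. I do not anticipate any serious obstacle beyond that observation.
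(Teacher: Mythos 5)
Your proof is correct and takes essentially the same route as the paper's: both rest on Proposition \ref{liftingprop} applied with $L = \operatorname{End} W$, together with Lemma \ref{uniqueliftedmodstruc}(b). Your version is in fact slightly cleaner --- you invoke the proposition directly rather than re-running its inductive construction as the paper does, and you spell out the verification (via perfectness of $\mathfrak{g}$) that the subalgebras $\iota_i(L_i(\mathfrak{g}))$ Lie-generate $L^m(\mathfrak{g})$, a fact the paper only asserts.
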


\begin{proof}
The statement is trivial if $m=1$; we will proceed now by induction. Assume the statement for $m=k-1$, with $2 \leq k \in \Z$, and consider $m=k$.

Let $y \otimes t^{\mathbf p_k} \in L^k(\frak g)$ where $t^{\mathbf p_k} := t_1^{p_1} \cdots t_k^{p_k}$ be such that $\displaystyle y=\sum_{y_1,\dots, y_k\in \mathfrak{g}} [y_k, [y_{k-1}, \cdots[y_2, y_1]\cdots ]]$. Let now $\displaystyle y'=\sum_{y_1,\dots, y_{k-1}\in \mathfrak{g}} [y_{k-1}, [y_{k-2}, \cdots[y_2, y_1]\cdots ]]$. Then for $w \in W$, define an action of $y \otimes t^{\mathbf p_k}$ on $w$ by $(y \otimes t^{\mathbf p_k})w := (y_k \otimes t^{p_k})(y' \otimes t^{\mathbf p_{k-1}})w - (y' \otimes t^{\mathbf p_{k-1}}) (y_k \otimes t^{p_k})w$. By the inductive assumption, this action is a well-defined action of $L^k(\frak g)$ on $W$. That its restriction to $L_i$ via $ \iota_i$ is the given $L_i(\frak g)$--module structure follows from Proposition \ref{liftingprop}. Uniqueness follows from Lemma \ref{uniqueliftedmodstruc} (b) since the $L_i(\frak g), i \in [1,k]$, generate $L^k(\frak g)$ as a Lie algebra.

\end{proof}

\subsection{Gluing multi-affine Weyl group from affine Weyl groups} The previous section demonstrated that the multiloop $L^m(\mathfrak{g})$-module structure can be uniquely lifted from a given family of single loop $L_i(\mathfrak{g})$-module structures.  In this section, we show that the same can be said on the $S_{\ell}$ side.  

\begin{defn}\label{classicaluntwistedtoroidalheckem}
Define $\on{Aff}^m(\Sl)$ to be the unital associative algebra over $\C$ with generators $\sigma^{\pm 1}_i$, $i \in [1, \ell-1 ]$, $\mathbf{y}_{1,r}$, ..., $\mathbf{y}_{m,r}$,  $r \in [1, \ell ]$ and relations

\begin{gather*}
 \sigma_k \sigma^{-1}_k = \sigma^{-1}_k \sigma_k = 1,\ \ 
\sigma_k \sigma_{k+1} \sigma_k = \sigma_{k+1} \sigma_k \sigma_{k+1},\\
 \sigma_k \sigma_j = \sigma_j \sigma_k,\ \  \vert k-j \vert > 1,\ \
 \sigma_k^2 = 1,\ \\ 
 \mathbf{y}_{i,s} \mathbf{y}_{i,s}^{-1} = \mathbf{y}_{i,s}^{-1} \mathbf{y}_{i,s} = 1, \ \mathbf{y}_{i,s} \mathbf{y}_{i,p} = \mathbf{y}_{i,p} \mathbf{y}_{i,s}, \\
 \mathbf{y}_{i,s} \sigma_k = \sigma_k \mathbf{y}_{i,s},\ \ s \notin \{ k, k+1 \}\\ \sigma_k \mathbf{y}_{i,k} \sigma_k = \mathbf{y}_{i,{k+1}}, 
 \\ \mathbf{y}_{i,1} \cdots \mathbf{y}_{i,{\ell}} \mathbf{y}_{{i'},1} = \mathbf{y}_{{i'},1} \mathbf{y}_{i,1} \cdots \mathbf{y}_{i,{\ell}},\ \  \ \ \mathbf{y}_{{i'},1} \mathbf{y}_{i,2}= \mathbf{y}_{i,2} \mathbf{y}_{{i'},1}
\end{gather*}
for $k \in [1, \ell-2 ]$ in the second relation; $i, i' \in [1, m]$; $k,j \in [1, \ell-1 ]$ otherwise; and $p,s \in [1, \ell ]$.\hfill
\end{defn}

Notice, when $m=2$, the definition above is precisely Definition \ref{classicaluntwistedtoroidalhecke}.  Therefore, by a similar argument used to prove Lemma \ref{torheckeisom}, we have $\on{Aff}^m(\Sl) \cong \mathbb{C}\left[\left(\Z^{\ell} \right)^m \rtimes \Sl\right]$.

The group $\on{Aff}^m(\Sl)$ contains subgroups $\asl = \on{Aff}_i(\Sl)\cong \mathbb{C}[\Z^{\ell}\rtimes S_{\ell}]$ 
, with $\Z^{\ell}$ appearing in the $i^{th}$ component only (and other components being zero). Similarly, use $ \nu_i: \on{Aff}_i(\Sl)\to \on{Aff}^m(\Sl)$ to be the embedding and let $ \nu_0: S_\ell \to \on{Aff}_i(\Sl)$ and $ \nu^m_0: \Sl\to \on{Aff}^m(\Sl)$ be the embeddings. 
Then there exist evaluation homomorphisms $ev_i: \on{Aff}_i(\Sl)\to \Sl$ sending the $\mathbf{y}_p \to 1$ for $p \in [1, \ell ]$. We have $\nu^m=\nu_i\circ \nu_0$ and 
\[ [\nu_i\ker(ev_i), \nu_j\ker(ev_j)]=1\]
in the group $ \on{Aff}^m(\Sl)$. Then we can prove a characterization of the group $\on{Aff}^m(\Sl)$ analogous to Proposition \ref{liftingprop}. 

\begin{prop} Let $ G$ be any group. For any system of group homomormphisms: $\rho_i: \on{Aff}_i(\Sl) \to G$ satisfying the conditions 
\begin{itemize}
\item[(i)] $\rho_i\circ \nu_0=\rho_j\circ\nu_0$, (so denote $ \rho_0=\rho_i\circ \nu_0: \Sl\to G$), and 
\item[(ii)] $ \rho_i\ker(ev_i) \rho_j\ker(ev_j)= \rho_j\ker(ev_j) \rho_i\ker(ev_i) $ for all $ i, j$, 
\end{itemize}
there is a unique group homomorphism $\rho: \on{Aff}^m(\Sl)\to G$ such that $\rho_i=\rho\circ \nu_i$. 
\end{prop}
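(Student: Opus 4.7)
The plan is to invoke the semidirect product decomposition $\on{Aff}^m(\Sl) \cong (\Z^\ell)^m \rtimes \Sl$, which follows from the same direct check used for Lemma \ref{torheckeisom}. Writing $K_i := \nu_i(\ker(ev_i)) \cong \Z^\ell$ and $S := \nu_0^m(\Sl)$, every element of $\on{Aff}^m(\Sl)$ admits a unique normal form $g = \mathbf{z}_1 \mathbf{z}_2 \cdots \mathbf{z}_m \sigma$ with $\mathbf{z}_i \in K_i$ and $\sigma \in S$, because the $K_i$'s pairwise commute (the fact recorded just before the proposition) and together with $S$ generate the group. I would then set
\[
\rho(g) := \rho_1(\mathbf{z}_1)\rho_2(\mathbf{z}_2)\cdots \rho_m(\mathbf{z}_m)\rho_0(\sigma),
\]
which is unambiguous by uniqueness of the normal form.

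To verify that $\rho$ is a group homomorphism I would take a second element $g' = \mathbf{z}'_1 \cdots \mathbf{z}'_m \sigma'$, bring $gg'$ to normal form in $\on{Aff}^m(\Sl)$ using the fact that $\sigma \mathbf{z}'_i \sigma^{-1} \in K_i$ together with the pairwise commutativity of the $K_i$'s, and separately compute $\rho(g)\rho(g')$ in $G$. Hypothesis (i), combined with each $\rho_i$ being a homomorphism, yields
\[
\rho_0(\sigma)\rho_i(\mathbf{z}'_i)\rho_0(\sigma)^{-1} = \rho_i\bigl(\nu_0(\sigma)\mathbf{z}'_i\nu_0(\sigma)^{-1}\bigr),
\]
which lets me pull $\rho_0(\sigma)$ past each $\rho_i(\mathbf{z}'_i)$; hypothesis (ii) then permits interleaving the resulting factors into the prescribed order $\rho_1(\cdots)\cdots\rho_m(\cdots)\rho_0(\sigma\sigma')$, which matches $\rho(gg')$ computed from the normal form of $gg'$.

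The identity $\rho \circ \nu_i = \rho_i$ is then immediate: for $\mathbf{z}\sigma\in\on{Aff}_i(\Sl)$ with $\mathbf{z}\in\ker(ev_i)$, the image $\nu_i(\mathbf{z}\sigma)$ is in normal form with only the $i$-th $K$-slot populated, so $\rho(\nu_i(\mathbf{z}\sigma)) = \rho_i(\mathbf{z})\rho_0(\sigma) = \rho_i(\mathbf{z})\rho_i(\nu_0(\sigma)) = \rho_i(\mathbf{z}\sigma)$. Uniqueness of $\rho$ follows from the group analogue of Lemma \ref{uniqueliftedmodstruc}(b), since the family $\{\nu_i(\on{Aff}_i(\Sl))\}_{i=1}^m$ generates $\on{Aff}^m(\Sl)$.

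The main obstacle will be the homomorphism verification: it requires carefully aligning the $S$-conjugation action (captured through (i) via $\rho_0 = \rho_i\circ\nu_0$) with the cross-factor commutativity of the images $\rho_i(K_i)$ (captured by (ii)) so that rearranging $\rho(g)\rho(g')$ into the prescribed normal-form order produces exactly $\rho(gg')$. Once those two ingredients are lined up against the normal form, the remainder is bookkeeping.
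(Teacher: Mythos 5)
Your proposal is correct and follows essentially the same route as the paper: both arguments define $\rho$ on the normal form $\mathbf{z}_1\cdots\mathbf{z}_m\sigma$ (the paper packages this inductively as $\rho(\bor,\sigma)=\rho\big((r_1,\ldots,r_{k-1}),1\big)\rho_k(r_k,\sigma)$, which unrolls to your formula), then verify the homomorphism property by using condition (i) to move $\rho_0(\sigma)$ across the $\rho_i$-factors and condition (ii) to interleave them, and conclude uniqueness from the fact that the subalgebras $\nu_i(\on{Aff}_i(\Sl))$ generate.
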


\begin{proof}
    We will prove this by induction on $m$. The statement is obvious when $m=1$. Assuming the  statement for $m=k-1$ we prove the result for case $m=k$. Let $\sigma \in \Sl$ and $\bor\in \left(\Z^{\ell} \right)^k$ with $\bor = (r_1, \ldots , r_k)$. Using the inductive assumption, define $$\rho (\bor, \sigma) := \rho \big((r_1, \ldots , r_{k-1}),1 \big) \rho_k (r_k, \sigma).$$ The fact that $\rho$ is well-defined, unique, and satisfies the condition $\rho_i=\rho\circ \nu_i$  follow from the definition once we prove that $\rho$ is shown to be a homomorphism of groups. This is done as follows: let $\tau\in\Sl$ and $\mathbf q=(q_1,\ldots, q_k)$. Then
	\begin{align*}
        \rho((\bor,\sigma)(\boq,\tau))
        &= \rho(\bor+\sigma(\boq),\sigma \tau) \\
        &= \rho \big((r_1+\sigma(q_1), \ldots , r_{k-1}+\sigma(q_{k-1})),1 \big) \rho_k (r_k+\sigma(q_k), \sigma \tau) \\
        &= \rho_1(r_1+\sigma(q_1),1) \cdots \rho_{k-1}(r_{k-1}+\sigma(q_{k-1}),1) \rho_k (r_k+\sigma(q_k), \sigma \tau) \\
        &= \rho_1(r_1,1) \rho_1(\sigma(q_1),1) \cdots \rho_{k-1}(r_{k-1},1)\rho_{k-1}(\sigma(q_{k-1}),1) \rho_k (r_k,1) \rho_k(\sigma(q_k), \sigma \tau) \\
        &= \rho_1(r_1,1) \cdots \rho_k(r_k,1) \rho_1(\sigma(q_1),1) \cdots \rho_{k-1}(\sigma(q_{k-1}),1) \rho_k(\sigma(q_k), \sigma \tau) \hspace{20mm} (*)
    \end{align*}
    where the third equality follows from the inductive definition of $\rho$, and the last equality from condition (ii). Now note that, for any $j \in [1,k]$, by condition (i) we have $\rho_0(\sigma) \rho_j(q_j,1) = \rho_j(\sigma(q_j),\sigma)=\rho_j(\sigma(q_j),1) \rho_0 (\sigma)$. Thus $(*)$ is
	\begin{align*}
        \rho_1(r_1,1) & \cdots \rho_k(r_k,1) \rho_1(\sigma(q_1),1) \cdots \rho_{k-1}(\sigma(q_{k-1}),1) \rho_0(\sigma) \rho_k(q_k, \tau) \\
        &= \rho_1(r_1,1) \cdots \rho_k(r_k,1) \rho_1(\sigma(q_1),1) \cdots \rho_0(\sigma) \rho_{k-1}(q_{k-1},1) \rho_k(q_k, \tau) \\
        &= \rho_1(r_1,1) \cdots \rho_k(r_k,1) \rho_0(\sigma) \rho_1(q_1,1) \cdots \rho_{k-1}(q_{k-1},1) \rho_k(q_k, \tau) \\
        &= \rho_1(r_1,1) \cdots \rho_{k-1}(r_{k-1},1) \rho_k(r_k,\sigma) \rho_1(q_1,1) \cdots \rho_{k-1}(q_{k-1},1) \rho_k(q_k, \tau) \\
        &= \rho \big((r_1, \ldots , r_{k-1}),1 \big) \rho_k (r_k, \sigma) \rho \big((q_1, \ldots , q_{k-1}),1 \big) \rho_k (q_k, \tau) \\
        &= \rho(\bor,\sigma) \rho(\boq,\tau).
    \end{align*}
\end{proof}

\begin{cor} Let $ M$ be an $\Sl$--module over $ \cc$. Assume that there is an $\on{Aff}_i(\Sl)$--module structure on $M$, denoted by $M_i$, extending the $\Sl$--module structure on $M$. Then there is a unique $\on{Aff}^m(\Sl)$--module structure on $M$ whose restriction to $\on{Aff}_i(\Sl)$ via $ \nu_i$ is $M_i$. \label{multiheckeliftingcorr}
\end{cor}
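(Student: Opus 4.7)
The plan is to apply the preceding proposition with $G = GL(M)$, the group of linear automorphisms of $M$. Each hypothesized $\on{Aff}_i(\Sl)$--module structure $M_i$ on $M$ is equivalent to a group homomorphism $\rho_i: \on{Aff}_i(\Sl) \to GL(M)$, and to invoke the proposition I need only check its conditions (i) and (ii) for this family.

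Condition (i) is immediate from the extension hypothesis: since each $M_i$ restricts to the fixed $\Sl$--module structure on $M$, the composition $\rho_i \circ \nu_0$ is the permutation action of $\Sl$ on $M$, which is the same for every $i$; thus the common restriction $\rho_0 = \rho_i \circ \nu_0$ is well-defined.

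Condition (ii) is the main content and the expected obstacle: one needs $\rho_i(\ker(ev_i))$ and $\rho_j(\ker(ev_j))$ to commute elementwise inside $GL(M)$, i.e., the operators on $M$ realizing $\mathbf{y}_{i,r}$ and $\mathbf{y}_{j,s}$ must commute whenever $i \neq j$. This commutativity is not logically forced by the hypotheses as stated and should be read as an implicit compatibility built into the data $\{M_i\}$, since the target $\on{Aff}^m(\Sl)$ itself enforces the same cross-commutation on its $\mathbf{y}_{i,\cdot}$--generators by Definition \ref{classicaluntwistedtoroidalheckem}. Granting it, the proposition produces a unique group homomorphism $\rho: \on{Aff}^m(\Sl) \to GL(M)$ with $\rho \circ \nu_i = \rho_i$, giving the desired $\on{Aff}^m(\Sl)$--module structure on $M$ whose restriction via each $\nu_i$ recovers $M_i$. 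Uniqueness follows either from the proposition or directly from Lemma \ref{uniqueliftedmodstruc}(b), since the subalgebras $\cc[\on{Aff}_i(\Sl)]$ jointly generate $\cc[\on{Aff}^m(\Sl)]$ as an associative algebra.
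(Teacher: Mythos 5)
Your route is genuinely different from the paper's. The paper's proof does not invoke the preceding proposition at all: it directly declares that $(n_1,\dots,n_m,\sigma)$ acts on $M$ as the composite operator $(n_1,1)(n_2,1)\cdots(n_m,1)(0,\sigma)$ built from the $M_i$-structures and the $\Sl$-structure, and then sketches a check that this respects the group law of $\left(\Z^\ell\right)^m\rtimes\Sl$. You instead apply the proposition with $G=GL(M)$, which is cleaner in that it reuses machinery already established and makes visible exactly which compatibility conditions are in play; the paper's direct construction quietly repeats the core of the proposition's proof. The uniqueness argument via Lemma \ref{uniqueliftedmodstruc}(b) is the same in both.

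Your diagnosis of condition (ii) is accurate, and it cuts equally against the paper's direct construction. To see that the declared action respects the group law one must verify that $(n_1,1)\cdots(n_m,1)(0,\sigma)\,(n'_1,1)\cdots(n'_m,1)(0,\sigma')$ equals $(n_1+\sigma(n'_1),1)\cdots(n_m+\sigma(n'_m),1)(0,\sigma\sigma')$; after moving $(0,\sigma)$ to the right past each $(n'_j,1)$ using the semidirect-product relation, one must still interleave the operators $(n_i,1)$ and $(\sigma(n'_j),1)$ for $i\neq j$, and that is precisely elementwise commutation of the realizations of $\ker(ev_i)$ and $\ker(ev_j)$ on $M$. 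As a freestanding statement the Corollary is therefore missing this as a hypothesis: already for $\ell=1$ one has $\on{Aff}_i(\Sl)\cong\cc[\Z]$, so an $\on{Aff}_i(\Sl)$-structure on $M$ is an arbitrary invertible operator $T_i$, and no $\cc[\Z^m]$-structure combines them unless the $T_i$ pairwise commute. In the only place the Corollary is used, Theorem \ref{mainliftingthm}(a), the $M_i$ are all restrictions of one and the same $\on{Aff}^m(\Sl)$-module, so the commutativity holds automatically and nothing breaks downstream; but your instinct to flag the hypothesis explicitly rather than leave it implicit is the right one.
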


\begin{proof}
Define an $\on{Aff}^m(\Sl)$--module structure on $M$ by declaring that $(n_1, n_2, \ldots , n_m, \sigma)$ act as $(n_1, 1) (n_2, 1) \cdots (n_m, 1) (0, \sigma)$ with each $(n_i,1)$ denoting the $M_i$ structure and $(0 , \sigma)$ the $\Sl$--module structure. Since $(0, \sigma)(n,1) = (\sigma(n),\sigma) = (\sigma(n),1)(0,\sigma)$, we have that the product $(n_1, \ldots , n_m, \sigma) (n'_1, \ldots , n'_m, \sigma') = (n_1+\sigma(n'_1), \ldots , n_m+\sigma(n'_m), \sigma \sigma')$, and thus this action does indeed define an $\on{Aff}^m(\Sl)$--module structure. That its restriction to $\on{Aff}_i(\Sl)$ via $ \nu_i$ is $M_i$ is obvious. Uniqueness is a consequence of Lemma \ref{uniqueliftedmodstruc} (b) since the $\on{Aff}_i(\Sl)$ generate $\on{Aff}^m(\Sl)$ as an associative algebra.
\end{proof}

\subsection{The Schur-Weyl duality for multiloop Lie algebras of type $A$} We prove the  main theorem of this section. It shows we can extend a known Schur-Weyl duality in the single loop case to the multiloop case by ``gluing together several loops'' at once, and it gives an alternative but less explicit way to  extend Schur-Weyl duality to the toroidal setting.  However  this method  is less reliant on the presentation of toroidal algebras as the affinization of affine algebras.

\begin{thm}\label{mainliftingthm}
Let  $\ell, m\geq 1$ be integers with $\g=\sln$ and $ \ell\leq n$. Then
\begin{itemize}
\item[(a)] For any $ \cc[\on{Aff}^m(\Sl)] $--module $M$
there is a unique $L^m(\frak g)$--module structure on $ \mathcal F(M)=M\otimes_{\cc[\Sl]} V^{\otimes {\ell}}$ extending the $L_i(\frak g)$--module structures given by Theorem \ref{nonquantumuntwistedequiv}.
\item[(b)] $\mathcal F:  \on{Mod-}\cc[\on{Aff}^m(\Sl)]\to 
L^m(\g)\on{-Mod}$ is a fully faithful functor.
\item[(c)] Let $L^m(\frak g)\on{-Mod}^{\ell}$ be the full subcategory of  all $L^m(\frak g)$--modules whose restriction to $ \frak g$ via $\iota_0^m$ is locally finite dimensional and all composition factors are the composition factors of $ V^{\otimes {\ell}}$. Then $ \mathcal F: \on{Mod-}\cc[\on{Aff}^m(\Sl)]\to 
L^m(\frak g)\on{-Mod}^{\ell}$ is a category equivalence. 
\end{itemize}
\end{thm}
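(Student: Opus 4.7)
The plan is to prove each of the three parts by reducing the multiloop case to the single-loop case through the gluing machinery of Proposition \ref{liftingprop} and Corollary \ref{multiheckeliftingcorr}.

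For part (a), given a $\cc[\on{Aff}^m(\Sl)]$--module $M$, I would restrict $M$ along each embedding $\nu_i \colon \on{Aff}_i(\Sl) \hookrightarrow \on{Aff}^m(\Sl)$ to obtain $\on{Aff}_i(\Sl)$--module structures on the same underlying space. Applying Theorem \ref{nonquantumuntwistedequiv} to each produces a family of Lie algebra homomorphisms $\rho_i \colon L_i(\g) \to \on{End}_{\cc}(\mathcal{F}(M))$. Then I would verify conditions (i) and (ii) of Proposition \ref{liftingprop} (condition (iii) being irrelevant for semisimple $\g$ by the remark). Condition (i) is immediate since $\rho_i(x \otimes 1)$ implements the diagonal $\g$--action of $x$ on $V^{\otimes \ell}$ independently of $i$. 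For condition (ii), the explicit formula in Theorem \ref{nonquantumuntwistedequiv} combined with the commutativity $\mathbf{y}_{i,k}\mathbf{y}_{j,l} = \mathbf{y}_{j,l}\mathbf{y}_{i,k}$ inside $\on{Aff}^m(\Sl)$ lets one reorder Hecke factors so that iterated brackets of $\rho_{i_r}(x_r \otimes t_{i_r}^{p_r})$ collapse to sums of the form $\sum_k m\,\mathbf{y}_{i_1,k}^{p_1}\cdots \mathbf{y}_{i_c,k}^{p_c} \otimes ([x_1,[x_2,\ldots,x_c]])_k \mathbf{v}$, which vanishes whenever the underlying $\g$--bracket does. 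The conclusion of Proposition \ref{liftingprop} (and its module corollary) then produces the desired $L^m(\g)$--structure; uniqueness is Lemma \ref{uniqueliftedmodstruc}(b), since the $L_i(\g)$ generate $L^m(\g)$.

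For part (b), let $\psi \colon \mathcal{F}(M) \to \mathcal{F}(N)$ be an $L^m(\g)$--morphism. Since $\psi$ is in particular a $\g$--morphism, classical Schur--Weyl duality (Proposition \ref{slnleftaction}) produces a unique $\Sl$--morphism $\phi \colon M \to N$ with $\mathcal{F}^0(\phi) = \psi$. Restricting $\psi$ along each $\iota_i$ yields an $L_i(\g)$--morphism, and the fully faithful half of Theorem \ref{nonquantumuntwistedequiv} implies $\phi$ intertwines the two $\on{Aff}_i(\Sl)$--actions. Since the $\nu_i(\on{Aff}_i(\Sl))$ generate $\on{Aff}^m(\Sl)$, Lemma \ref{uniqueliftedmodstruc}(a) concludes that $\phi$ is an $\on{Aff}^m(\Sl)$--morphism. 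For part (c), given $W$ in $L^m(\g)\on{-Mod}^\ell$, I would restrict to each $L_i(\g)$; the composition--factor hypothesis places this restriction in the essential image of the equivalence in Theorem \ref{nonquantumuntwistedequiv} (applied finite-dimensionally on each subquotient and assembled through the directed system of finite-dimensional submodules), giving an $\on{Aff}_i(\Sl)$--module $M_i$ with $W \cong M_i \otimes_{\cc[\Sl]} V^{\otimes \ell}$ as $L_i(\g)$--modules. Classical Schur--Weyl identifies the underlying $\Sl$--modules of all $M_i$ with a single fixed module $M$, so the $M_i$ amount to distinct $\on{Aff}_i(\Sl)$--structures on $M$ extending its $\Sl$--structure. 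Corollary \ref{multiheckeliftingcorr} glues these into one $\on{Aff}^m(\Sl)$--structure, and the resulting $\mathcal{F}(M)$ agrees with $W$ as an $L^m(\g)$--module by Lemma \ref{uniqueliftedmodstruc}(b), since the two structures match on every $L_i(\g)$.

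The main obstacle I anticipate is verifying condition (ii) of Proposition \ref{liftingprop} in part (a). While conceptually it reduces to commutativity across loops, one has to handle the affine generator $x_0^\pm$ with care: the formula from Theorem \ref{nonquantumuntwistedequiv} inserts an extra $\mathbf{y}_j^{\pm 1}$ factor, and one must confirm this factor is correctly absorbed by the cross-loop relations in Definition \ref{classicaluntwistedtoroidalheckem} when computing iterated brackets mixing the distinguished node with ordinary Chevalley generators. A secondary subtlety is justifying the application of Theorem \ref{nonquantumuntwistedequiv} in part (c), where the target category is locally finite rather than finite-dimensional; this is handled by passing to a direct limit over finite-dimensional $L^m(\g)$--submodules of $W$. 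The remainder of the argument is largely formal, built on the two gluing propositions and Lemma \ref{uniqueliftedmodstruc}.
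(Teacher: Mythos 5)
Your proposal is correct and follows essentially the same reduction-via-gluing strategy as the paper: restrict along $\nu_i$, invoke Theorem~\ref{nonquantumuntwistedequiv} loopwise, verify the hypotheses of Proposition~\ref{liftingprop} (with the key computation in condition (ii) collapsing iterated brackets to $\sum_j m\,\mathbf{y}_{i_1,j}^{p_1}\cdots\mathbf{y}_{i_c,j}^{p_c}\otimes[x_1,[\ldots,x_c]]_j\mathbf{v}$ via commutativity of the $\mathbf y$'s), and then apply Corollary~\ref{multiheckeliftingcorr} together with Lemma~\ref{uniqueliftedmodstruc} for uniqueness, full faithfulness, and essential surjectivity. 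The only genuine deviation is that you invoke the remark to dispense with condition (iii), whereas the paper --- since that remark is stated without proof --- does verify (iii) explicitly by the same commutativity-of-$\mathbf y$'s computation as (ii); this is a minor shortcut but would be safer to carry out, exactly as you did for (ii).
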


\begin{proof}
\begin{itemize}
\item[(a)] By Corollary \ref{multiheckeliftingcorr} we see that the restricted $\on{Aff}_i(S_{\ell})$--module structures $M_i, 1\le i \le m$ combine to give a unique $\on{Aff}^m(S_\ell)$--module structure, which must therefore be $M$.  Let $L=\mathfrak{gl}\left(\mathcal{F} (M) \right)$, and define $\rho_i:L_i(\frak g)\to L, 1\le i \le m$ by $\rho_i(x \otimes t_i^{n_i}) (m \otimes \mathbf{v}) = (x \otimes t_i^{n_i}).(m \otimes \mathbf{v})$ and check that the $\rho_i$ satisfy the conditions of Proposition \ref{liftingprop}.
\begin{itemize}
\item[(i)] For $x \in \g$, $\rho_i\circ \iota_{i,0} (x) = \rho_i (x \otimes t_i^0) = \rho_j (x \otimes t_j^0) = \rho_j\circ \iota_{j,0} (x)$.
\item[(ii)] We show this in two steps. First, we show that for any sequence $ (i_1, \dots, i_c)$ of elements of $\{1, \ldots, m\}$,
 and for any $x_1, \ldots, x_c\in \frak g$, we have
$$\big([\rho_{i_1}(x_1\otimes t_{i_1}^{p_1}), [\rho_{i_2}(x_2\otimes t_{i_2}^{p_2}), \cdots[\rho_{i_{c-1}}(x_{c-1}\otimes t_{i_{c-1}}^{p_{c-1}}), \rho_{i_c}(x_c\otimes t_{i_c}^{p_c})]\cdots ]]\big).(m \otimes \mathbf{v})$$
$$= \sum _{j=1}^\ell\left (m . \mathbf{y}_{i_1,j}^{p_1}\mathbf{y}_{i_2,j}^{p_2} \cdots \mathbf{y}_{i_c,j}^{p_c} \otimes [x_1, [x_2, \cdots[x_{c-1}, x_c]\cdots ]]_j . \mathbf{v}\right )$$
where $\mathbf y_{i,j}^p, 1 \le i \le m, 1\le j \le \ell, p \in \mathbb Z$ is shorthand for $(p\mathbf e_{ij},1)\in \text{Aff}^m(S_\ell)$ where $\mathbf e_{ij}$ is the $m$-tuple with the unit vector $\mathbf e_j$ in the $i$th position and 0's elsewhere. We use induction on $c$ with the $c=1$ case following from Theorem \ref{nonquantumuntwistedequiv}. We compute: 
\begin{align*}
&\big([\rho_{i_1}(x_1\otimes t_{i_1}^{p_1}), [\rho_{i_2}(x_2\otimes t_{i_2}^{p_2}), \cdots[\rho_{i_{c-1}}(x_{c-1}\otimes t_{i_{c-1}}^{p_{c-1}}), \rho_{i_c}(x_c\otimes t_{i_c}^{p_c})]\cdots ]]\big).(m \otimes \mathbf{v})\\
&=\sum _{j=1}^\ell\rho_{i_1}(x_1\otimes t_{i_1}^{p_1})\left (m. \mathbf{y}_{i_2,j}^{p_2}\mathbf{y}_{i_3,j}^{p_3} \cdots \mathbf{y}_{i_c,j}^{p_c} \otimes [x_2, [x_3, \cdots[x_{c-1}, x_c]\cdots ]]_j . \mathbf{v}\right )\\
&\qquad-\sum _{k=1}^\ell[\rho_{i_2}(x_2\otimes t_{i_2}^{p_2}), \cdots[\rho_{i_{c-1}}(x_{c-1}\otimes t_{i_{c-1}}^{p_{c-1}}), \rho_{i_c}(x_c\otimes t_{i_c}^{p_c})]\cdots ]\left (m. \mathbf{y}_{i_1,k}^{p_1} \otimes (x_{1})_k. \mathbf{v}\right )\\
&=\sum_{j=1}^\ell\sum_{k=1}^\ell \big (m. \mathbf{y}_{i_2,j}^{p_2}\mathbf{y}_{i_3,j}^{p_3} \cdots \mathbf{y}_{i_c,j}^{p_c}\mathbf{y}_{i_1,k}^{p_1} \otimes (x_1)_k[x_2, [x_3, \cdots[x_{c-1}, x_c]\cdots ]]_j. \mathbf{v}\\
&\qquad-m. \mathbf{y}_{i_1,k}^{p_1}\mathbf{y}_{i_2,j}^{p_2}\mathbf{y}_{i_3,j}^{p_3} \cdots \mathbf{y}_{i_c,j}^{p_c} \otimes [x_2, \cdots[x_{c-1}, x_c]\cdots ]_j(x_{1})_k. \mathbf{v}\big ).
\end{align*}
The last line is 0 unless $j=k$, in which case it becomes:
\begin{align*}
\sum_{j=1}^\ell\big (m. \mathbf{y}_{i_1,j}^{p_1}\mathbf{y}_{i_2,j}^{p_2}\mathbf{y}_{i_3,j}^{p_3} \cdots \mathbf{y}_{i_c,j}^{p_c} \otimes [x_1,[x_2, [x_3, \cdots[x_{c-1}, x_c]\cdots ]]]_j. \mathbf{v}\big ).
\end{align*}

It follows that 
$$
\sum_{x_1,\dots, x_c \in \mathfrak g} [\rho_{i_1}(x_1\otimes t_{i_1}^{p_1}), [\rho_{i_2}(x_2\otimes t_{i_2}^{p_2}), \cdots[\rho_{i_{c-1}}(x_{c-1}\otimes t_{i_{c-1}}^{p_{c-1}}), \rho_{i_c}(x_c\otimes t_{i_c}^{p_c})]\cdots ]]=0$$
if
$$
\sum_{x_1,\dots, x_c\in \mathfrak{g}} [x_1, [x_2, \cdots[x_{c-1}, x_c]\cdots ]]=0.
$$
\item[(iii)]
As in the previous step, we see that
\begin{align*}
&\big([\rho_{i_1}(x_1\otimes t_{i_1}^{p_{i_1}}), [\rho_{i_2}(x_2\otimes t_{i_2}^{p_{i_2}}), \cdots[\rho_{i_{c-1}}(x_{c-1}\otimes t_{i_{c-1}}^{p_{i_{c-1}}}), \rho_{i_c}(x_c\otimes t_{i_c}^{p_{i_c}})]\cdots ]]\big).(m \otimes \mathbf{v})\\
&\qquad=\sum_{k=1}^\ell\big (m. \mathbf{y}_{i_1,k}^{p_{i_1}}\mathbf{y}_{i_2,k}^{p_{i_2}}\mathbf{y}_{i_3,k}^{p_{i_3}} \cdots \mathbf{y}_{i_c,k}^{p_{i_c}} \otimes [x_1,[x_2, [x_3, \cdots[x_{c-1}, x_c]\cdots ]]]_k. \mathbf{v}\big ).
\end{align*}
If $(j_1,\dots, j_c)$ is a permutation of $(i_1,\dots i_c)$ then 
\begin{align*}
&\big([\rho_{j_1}(x_1\otimes t_{j_1}^{p_{j_1}}), [\rho_{j_2}(x_2\otimes t_{j_2}^{p_{j_2}}), \cdots[\rho_{j_{c-1}}(x_{c-1}\otimes t_{j_{c-1}}^{p_{j_{c-1}}}), \rho_{j_c}(x_c\otimes t_{j_c}^{p_{j_c}})]\cdots ]]\big).(m \otimes \mathbf{v})\\
&\qquad=\sum_{k=1}^\ell\big (m. \mathbf{y}_{j_1,k}^{p_{j_1}}\mathbf{y}_{j_2,k}^{p_{j_2}}\mathbf{y}_{j_3,k}^{p_{j_3}} \cdots \mathbf{y}_{j_c,k}^{p_{j_c}} \otimes [x_1,[x_2, [x_3, \cdots[x_{c-1}, x_c]\cdots ]]]_k. \mathbf{v}\big )
\end{align*}
which is the same element of $M\otimes V^{\otimes \ell}$ since the $\mathbf y_{i,j}$ commute.  
\end{itemize}
Therefore, the $\rho_i$ thus defined satisfy the conditions of Proposition \ref{liftingprop}. The existence of the $L^m(\frak g)$--module structure follows from Proposition \ref{liftingprop}, and uniqueness follows from Lemma \ref{uniqueliftedmodstruc}.

\item[(b)] As the functor is fully faithful in the $m=1$ case, this statement follows from Theorem \ref{nonquantumuntwistedequiv}, Proposition \ref{liftingprop}, and Lemma \ref{uniqueliftedmodstruc}.

\item[(c)] We have the restrictions $\mathcal{F}(M) \to \left( \mathcal{F}(M)|_{L_i (\g)} \right) \big|_{\g}$ via $\iota_0^m$ which each satisfy the stated conditions. Then the statement follows from Theorem \ref{nonquantumuntwistedequiv}, Proposition \ref{liftingprop}, and Lemma \ref{uniqueliftedmodstruc}.

\end{itemize}
\end{proof}


\begin{thebibliography}{99999}\frenchspacing
\label{referrence}
\bibitem[\sf 1]{Benson-Doty} Benson, D., Doty, S. (2009). {\em Schur-Weyl duality over finite fields.} Arch. Math. \textbf{93}: 425--435.

\bibitem[\sf 2]{Calvert}   Calvert, K. (2022). \textit{Compact Schur-Weyl Duality And The Type B/C VW-Algebra}. Adv. in Math. \textbf{407}: Article 108575.

\bibitem[\sf 3]{Calverts}   Calvert, K. (2022). \textit{Compact Schur-Weyl duality: real Lie groups and the cyclotomic Brauer algebra}. Journal of Pure and Applied Algebra. \textbf{226}: Article 107082.

\bibitem[\sf 4]{Chari}   Chari, V., Pressley, A. (1996). \textit{Quantum Affine Algebras and Affine Hecke Algebras.} Pacific Journal of Math. \textbf{174}(2): 295--326.

\bibitem[\sf 5]{ChariPressley}  Chari, V., Pressley, A. (2001). \textit{Weyl Modules for Classical and Quantum Affine Algebras.} Represent. Theory. \textbf{5}(2): 191–-223.

\bibitem[\sf 6]{Cruz} Cruz, T. (2019). {\em Schur-Weyl duality over commutative rings}. Comm. Alg. {\bf 47}(4): 1619--1628.

\bibitem [\sf 7]{Daugherty}   Daugherty, Z., Ram, A., Virk, R. (2014). \textit{Affine and Degenerate Affine BMW Algebras: The Center.} Osaka Journal of Math. \textbf{51}: 257--283.

\bibitem[\sf 8]{deConcini-Procesi} De Concini, C., Procesi, C. (1976). {\em A characteristic free approach to invariant theory.} Adv. in Math. {\bf 21}: 330–-354. 

\bibitem [\sf 9]{Dipper}   Dipper, R., Doty, S., Hu, J. (2008). \textit{Brauer Algebras, Symplectic Schur Algebras and Schur-Weyl Duality.} Trans. Amer. Math. Soc. \textbf{360}(1): 189--213.

\bibitem [\sf 10]{Doty}   Doty, S. (2004). \textit{New versions of Schur-Weyl duality.} Finite Groups 2003: Proceedings of the Gainesville Conference on Finite Groups, March 6-12, 2003 edited by C. Y. Ho, P. Sin, P. H. Tiep and A. Turull.  Berlin, New York: De Gruyter, pp. 59--72.

\bibitem [\sf 11]{Ehrig}   Ehrig, M., Stroppel, C. (2016). \textit{Schur-Weyl Duality for the Brauer Algebra and the Ortho-Symplectic Lie Superalgebra}. Mathematische Zeitschrift. \textbf{284}: 595--613.

\bibitem [\sf 12] {Flickers}   Flicker, Y. Z. (2020). \textit{Affine quantum super Schur-Weyl duality.} Algebras Rep. Theory. \textbf{23}: 136--167.

\bibitem [\sf 13] {Flicker}   Flicker, Y. Z. (2021). \textit{Affine Schur Duality.} Journal of Lie Theory. \textbf{31}: 681--718.

\bibitem [\sf 14]{Fulton}   Fulton, W., Harris, J. (2004). \textit{Representation Theory: A First Course}. Springer Science.

\bibitem [\sf 15]{Goodman}   Goodman, R., Wallach, N. (2009). \textit{Symmetry, Representations, and Invariants}. Springer \textbf{GTM 255}.

\bibitem [\sf 16]{Hu}   Hu, J. (2011). \textit{BMW Algebra, Quantized Coordinate Algebra and Type $C$ Schur-Weyl Duality}. Represent. Theory. \textbf{15}: 1--62.

\bibitem [\sf 17] {Jing}   Jing, N., Misra, K. C. (2010). \textit{Fermionic Realization of Toroidal Lie Algebras of Classical Types.} Journal of Algebra. \textbf{324}: 183--94. 

\bibitem [\sf 18]{Krause}  Krause, H. (2015). \textit{Polynomial representations of $GL(n)$ and Schur-Weyl duality.} Beitr Algebra Geom. \textbf{56}: 769--773. 

\bibitem [\sf 19]{Lau}  Lau, M. (2010). \textit{Representations of Multiloop Algebras.} Pacific Journal of Math. \textbf{245}(1): 167--184.

\bibitem [\sf 20]{Moody}  Moody, R. V., Rao, S. E., Yokonuma, T. (1990). \textit{Toroidal Lie algebras and vertex representations}. Geometriae Dedicata.  \textbf{35}: 283--307.

\bibitem [\sf 21]{Moon}  Moon, D. (2003). \textit{Highest weight vectors of irreducible representations of the quantum superalgebra $\mathfrak{U}_q(gl(m,n))$.} Journal of Korean Mathematical Society. \textbf{40}(1): 1--28. 

\bibitem[\sf 22]{VV96} Varagnolo, M., Vasserot, E. (1996). \textit{Schur Duality in the Toroidal Setting.} Communications in Mathematical Physics. \textbf{182}: 469--484.

\bibitem[\sf 23]{VV98}   Varagnolo, M., Vasserot, E. (1998). \textit{Double-loop algebras and the Fock spaces.} Invent. Math. \textbf{133}: 133--159.

\end{thebibliography}
\end{document}